\newtheorem{thm}{Theorem}[section]
\newtheorem{cor}[thm]{Corollary}
\newtheorem{lem}[thm]{Lemma}
\newtheorem{prop}[thm]{Proposition}
\theoremstyle{definition}
\newtheorem{defn}[thm]{Definition}
\theoremstyle{remark}
\newtheorem{rem}[thm]{Remark}
\numberwithin{equation}{section}
\newcommand{\norm}[1]{\left\Vert#1\right\Vert}
\newcommand{\abs}[1]{\left\vert#1\right\vert}
\newcommand{\set}[1]{\left\{#1\right\}}
\newcommand{\paren}[1]{\left(#1\right)}
\newcommand{\corch}[1]{\left[#1\right]}
\newcommand{\ud}{\mathrm{d}}
\newcommand{\Hil}{\mathcal{H}}
\newcommand{\Ka}{\mathcal{K}}
\newcommand{\Vsb}{\mathcal{V}}
\newcommand{\Wsb}{\mathcal{W}}
\newcommand{\R}{\mathbb{R}}
\newcommand{\Z}{\mathbb{Z}}
\newcommand{\N}{\mathbb{N}}
\newcommand{\e}{\epsilon}
\newcommand{\dl}{\delta}
\newcommand{\ga}{\gamma}
\newcommand{\Leb}{L^{1}\left(\mathbb{R}\right)}
\newcommand{\Lebcuad}{L^{2}\left(\mathbb{R}\right)}
\newcommand{\lcuad}{\ell^2(\N)}
\newcommand{\lcuadent}{\ell^2(\Z)}
\begin{document}

\title[Approximate oblique dual frames]{Approximate oblique dual frames}

\author[Jorge P. D\'{i}az]{Jorge P. D\'{i}az$^{1}$}%

\author[Sigrid B. Heineken]{Sigrid B. Heineken$^{2}$}%

\author[Patricia M. Morillas]{Patricia M. Morillas$^{1,*}$\\\\ $^{1}$\textit{I\lowercase{nstituto de }M\lowercase{atem\'{a}tica }A\lowercase{plicada} S\lowercase{an} L\lowercase{uis, }UNSL-CONICET, E\lowercase{j\'{e}rcito de los }A\lowercase{ndes 950, 5700 }S\lowercase{an} L\lowercase{uis,} A\lowercase{rgentina} \\ $^2$ D\lowercase{epartamento de} M\lowercase{atem\'atica}, FCE\lowercase{y}N, U\lowercase{niversidad de }B\lowercase{uenos} A\lowercase{ires}, P\lowercase{abell\'on} I, C\lowercase{iudad }U\lowercase{niversitaria}, IMAS, UBA-CONICET, C1428EGA C.A.B.A., A\lowercase{rgentina}}}
%
%
\thanks{* Corresponding author. E-mail address: morillas.unsl@gmail.com\\
\textit{E-mail addresses:} jpdiaz1179@gmail.com (J. P. D\'{i}az),
sheinek@dm.uba.ar (S. B. Heineken), morillas.unsl@gmail.com (P. M.
Morillas).}
%
\begin{abstract}
In representations using frames, oblique duality appears in
situations where the analysis and the synthesis has to be done in
different subspaces. In some cases, we cannot obtain an explicit
expression for the oblique duals and in others there exists only one
oblique dual frame which has not the properties we need. Also, in
practice the computations are not exact. To give a solution to these
problems, in this work we introduce and investigate the notion of
approximate oblique dual frames first in the setting of separable
Hilbert spaces. We present several properties and provide different
characterizations of approximate oblique dual frames. We focus then
on approximate oblique dual frames in shift-invariant subspaces of
$\Lebcuad$ and give different conditions on the generators that
assure their existence. The importance of approximate oblique dual
frames from a numerical and computational point of view is
illustrated with an example of frame sequences generated by
$B$-splines, where the previous results are used to construct
approximate oblique dual frames which have better attributes than
the exact ones. We provide an expression for the approximation error
and study its behaviour.

\bigskip

\bigskip

{\bf Key words:} Frames, Oblique dual frames, Approximate dual
frames, Oblique projections, Shift-invariant spaces.

\medskip

{\bf AMS subject classification:} Primary 42C15; Secondary 42C40,
46C05, 41A30.

\end{abstract}

\maketitle

\section{Introduction}

A frame is a sequence of vectors in a separable Hilbert space.
Frames generalize bases \cite{Christensen (2016), Daubechies (1992),
Daubechies-Grossmann-Meyer (1986), Duffin-Schaeffer (1952), Heil
(2006), Young (2001)}. The main difference to bases is that the
reconstruction of each vector is not necessarily unique, i.e. there
exists more than one collection of coefficients to represent each
vector as a combination of the elements of a frame. These frame
coefficients are associated to other sequences called dual frames.

In practice, the computation of duals is not exact and sometimes it
is even not possible to give in theory an analytic expression of it.
Another difficulty is that in applications we need to truncate the
frame representations in order to work with finite sequences of
numbers. Also the frame coefficients can in general only be computed
approximately. Approximate dual frames appear as an answer to these
problems \cite{Christensen-Laugesen (2010), Dorfler-Matusiak (2015),
Feichtinger-Onchis-Wiesmeyr (2014),
Perraudin-Holighaus-Sondergaard-Balazs (2018)}. They are also easier
to build and can be adapted to our needs.

For frames in a subspace, the reconstruction can also be done with
coefficients that depend on duals that do not necessarily belong to
the same subspace. These are known as oblique dual frames
\cite{Eldar-Christensen (2006), Christensen-Eldar (2004), Eldar
(2003a), Eldar (2003b), Eldar-Werther (2005), Kim-Koo-Lim (2013),
Koo-Lim (2013), Koo-Lim (2015), Xiao-Zhu-Zeng (2013)}. These dual
frames are related with oblique projections and are of particular
interest in signal processing, where the goal is to recover the
signal itself. In this situation we again have the limitations and
the computational difficulties described before. Moreover, sometimes
the subspaces are given by the problem and there exists a unique
oblique dual frame. This unique oblique dual could need to be
improved in some aspects. In order to respond to these different
problems, in this work we introduce the concept of approximate
oblique dual frames and study its properties, first in the setting
of separable Hilbert spaces and then in shift-invariant subspaces of
$L^{2}\paren{\mathbb{R}}$. Since in general there exists more than
one approximate oblique dual, we also gain freedom in its
construction.

In Section 2 we give a brief review of existing definitions and
results. In Sections 3 to 6, we introduce and investigate the notion
of approximate oblique dual frames in the setting of separable
Hilbert spaces. In Sections 7 and 8, we focus on approximate oblique
dual frames in shift-invariant subspaces of $\Lebcuad$.

In Section 3, we introduce the notion of $\epsilon$-approximate
coblique dual frames, where $\epsilon>0$, and investigate some of
its properties. We also give its interpretation in the setting of
signal processing theory and the conditions of uniqueness and
consistency in order to have a good approximation of a signal.

In Section 4, we obtain characterizations of approximate oblique
dual frames, in terms  of series expansions as well as in terms of
operators.

In Section 5, we extend to the oblique setting an important property
of approximate dual frames, that is to obtain a reconstruction of
the vectors as close as we desire starting from any pair of
approximate oblique dual frames. We also show how to construct
oblique dual frames from approximate oblique dual frames.

In Section 6, we obtain approximate oblique dual frames from the
perturbation of oblique duals. We prove that if two frames are
``close'', each oblique dual frame of any of them is an approximate
oblique dual frame of the other, expressing its bounds in terms of
the original ones.

Shift-invariant subspaces with frames of translates are widely used
in the applications, and in particular in image and signal
processing. Moreover, shift-invariant spaces play a key role in the
construction of wavelets frames and Gabor frames. In Section 7, we
consider shift-invariant subspaces of $\Lebcuad$ and obtain
conditions on their generators to provide an approximate
reconstruction in one of them. The technique introduced in the proof
of the main result of this section
(Theorem~\ref{caractdualestrasla}) is the key to the obtainment of
the central results of section 8 about approximate oblique dual
frames.

We begin Section 8 stating sufficient conditions for approximate
oblique duality, that are derived from results of section 7. We then
give an expression for the Fourier transform of the oblique
projection when the spaces are shift-invariant. We use it to provide
a more manageable sufficient condition, and also a necessary
condition on the generators of shift-invariant subspaces for the
existence of approximate oblique dual frames.

In Section 9 we  describe the necessity of working in the
applications with approximate oblique dual frames. The importance of
approximate oblique dual frames from a numerical and computational
point of view is illustrated with an example. We consider frame
sequences of translates generated by $B$-splines. In this case, the
generator of the smooth unique oblique dual frame has not compact
support whereas the generator of each obtained approximate oblique
dual frame is not only smooth but also compactly supported. We give
an expression for the approximation error and analyze its behaviour.


\section{Preliminaries}

We consider $\Hil, \Ka$ separable Hilbert spaces. The space of
bounded operators from $\Hil$ to $\Ka$ will be denoted by
$L(\Hil,\Ka)$. For $T\in L(\Hil,\Ka)$ denote the image, the null
space and the adjoint of $T$ by  $\mathcal{R}(T)$, $\mathcal{N}(T)$
and $T^{*}$, respectively. If $T$ has closed range we also consider
the Moore-Penrose pseudo-inverse of $T$ denoted by $T^{\dagger}$.
The inner product and the norm in $\Hil$ will be denoted by $\langle
\cdot ,\cdot \rangle$ and $\|\cdot\|$, respectively.

The Lebesgue measure of a measurable subset $A$ of $\mathbb{R}$ will
be denoted by $|A|$. We denote the characteristic function of $A$
with $\chi_{A}$ and the complement of $A$ with $A^{c}$. We consider
the class $\mathcal{C}_{{\rm per}}^{p}$ of $1$-periodic functions
that restricted to $[0,1)$ belong to $L^{p}(0,1)$.

We now review briefly definitions and present properties that we use
later. In the sequel $\mathcal{V}$ and $\mathcal{W}$ will be two
closed subspaces of $\Hil$.

\subsection{Oblique projections}

Let $\Hil=\mathcal{W} \oplus \mathcal{V}^{\perp}$. The oblique
projection onto $\mathcal{W}$ along $\mathcal{V}^{\perp}$, is the
unique operator that satisfies

\centerline{$\pi_{\mathcal{W}\mathcal{V}^{\perp}}f=f \text{ for all
} f\in \mathcal{W}$,\,\, \, $\pi_{\mathcal{W}\mathcal{V}^{\perp}}f=0
\text{ for all } f\in \mathcal{V}^{\perp}$.}

Equivalently,
$\mathcal{R}(\pi_{\mathcal{W}\mathcal{V}^{\perp}})=\mathcal{W}$ and
$\mathcal{N}(\pi_{\mathcal{W}\mathcal{V}^{\perp}})=\mathcal{V}^{\perp}$.
Observe that $\left(\pi_{\Vsb\Wsb^{\perp}}\right)^{\ast} =
\pi_{\Wsb\Vsb^{\perp}}$ and
$(\pi_{\Vsb\Wsb^{\perp}})_{\mid\Vsb}=I_{\Vsb}$. If
$\mathcal{V}=\mathcal{W}$ we obtain the orthogonal projection onto
$\mathcal{V}$, which we denote by $P_{\Vsb}$. The next result can be
deduced from \cite[Lemma 2.1]{Heineken-Morillas (2018)} and the
definitions of orthogonal and oblique projections:
\begin{lem}\label{lemma 001}
Let $\Hil=\mathcal{W} \oplus \mathcal{V}^{\perp}$. The following
holds:
\begin{itemize}
\item[(i)]
$ (P_{\Wsb})_{ \mid\Vsb } :\Vsb \rightarrow\Wsb$ and $ (P_{\Vsb }
)_{ \mid\Wsb } :\Wsb \rightarrow\Vsb $ are isomorphisms, $ ((
P_{\Wsb})_{ \mid\Vsb })^{-1} = (\pi_{\Vsb\Wsb^{\perp}})_{ \mid \Wsb
}$ and $ ((P_{\Vsb })_{ \mid\Wsb })^{-1} = (\pi_{
\Wsb\Vsb^{\perp}})_{ \mid\Vsb }$.
\item[(ii)] $ ((P_{\Wsb})_{ \mid\Vsb })^{\ast} = (P_{\Vsb })_{ \mid
\Wsb }$ and $((\pi_{\Wsb\Vsb^{\perp}})_{ \mid\Vsb })^{\ast} = (
\pi_{\Vsb\Wsb^{\perp}})_{ \mid\Wsb }$.\qedhere
\end{itemize}
\end{lem}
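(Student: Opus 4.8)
The strategy is to verify every assertion by a short direct computation, using only the two defining properties of the oblique projection recorded in Section~2 (that $\pi_{\Vsb\Wsb^{\perp}}$ acts as the identity on $\Vsb$ and has null space $\Wsb^{\perp}$, and correspondingly for $\pi_{\Wsb\Vsb^{\perp}}$), the self-adjointness of the orthogonal projections, and the relation $(\pi_{\Wsb\Vsb^{\perp}})^{\ast}=\pi_{\Vsb\Wsb^{\perp}}$. A preliminary remark organizes the work: since $\Hil=\Vsb\oplus\Wsb^{\perp}$, Theorem~\ref{descompdH} also gives $\Hil=\Wsb\oplus\Vsb^{\perp}$, so the hypothesis is symmetric under exchanging $\Vsb$ and $\Wsb$ (which swaps $P_{\Wsb}\leftrightarrow P_{\Vsb}$ and $\pi_{\Vsb\Wsb^{\perp}}\leftrightarrow\pi_{\Wsb\Vsb^{\perp}}$). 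Hence in each pair of claims I would prove only one and obtain the other by this symmetry.

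For part (i), I would show that $(\pi_{\Vsb\Wsb^{\perp}})_{\mid\Wsb}$ is a two-sided inverse of $(P_{\Wsb})_{\mid\Vsb}$. Given $w\in\Wsb$, decompose $w=\pi_{\Vsb\Wsb^{\perp}}w+(I-\pi_{\Vsb\Wsb^{\perp}})w$ along $\Vsb\oplus\Wsb^{\perp}$; applying $P_{\Wsb}$ and using that the second summand lies in $\Wsb^{\perp}$ yields $P_{\Wsb}\pi_{\Vsb\Wsb^{\perp}}w=P_{\Wsb}w=w$. Symmetrically, for $v\in\Vsb$ decompose $v=P_{\Wsb}v+(I-P_{\Wsb})v$ along $\Wsb\oplus\Wsb^{\perp}$; since $\Wsb^{\perp}=\mathcal{N}(\pi_{\Vsb\Wsb^{\perp}})$ and $\pi_{\Vsb\Wsb^{\perp}}$ fixes $v$, this gives $\pi_{\Vsb\Wsb^{\perp}}P_{\Wsb}v=\pi_{\Vsb\Wsb^{\perp}}v=v$. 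These two identities exhibit the restrictions as mutually inverse bijections between $\Vsb$ and $\Wsb$; both are restrictions of bounded operators, so they are isomorphisms, and the second formula in (i) follows by the $\Vsb\leftrightarrow\Wsb$ symmetry.

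For part (ii), I would compute the adjoints directly from $\langle Tu,w\rangle=\langle u,T^{\ast}w\rangle$, keeping in mind that these adjoints are taken between the subspaces $\Vsb$ and $\Wsb$ with their inherited inner products. For $v\in\Vsb$, $w\in\Wsb$, self-adjointness of $P_{\Wsb}$ and $P_{\Vsb}$ gives $\langle P_{\Wsb}v,w\rangle=\langle v,w\rangle=\langle v,P_{\Vsb}w\rangle$, so $((P_{\Wsb})_{\mid\Vsb})^{\ast}=(P_{\Vsb})_{\mid\Wsb}$. Likewise, $\langle\pi_{\Wsb\Vsb^{\perp}}v,w\rangle=\langle v,(\pi_{\Wsb\Vsb^{\perp}})^{\ast}w\rangle=\langle v,\pi_{\Vsb\Wsb^{\perp}}w\rangle$, which identifies $((\pi_{\Wsb\Vsb^{\perp}})_{\mid\Vsb})^{\ast}$ with $(\pi_{\Vsb\Wsb^{\perp}})_{\mid\Wsb}$, noting that $\pi_{\Vsb\Wsb^{\perp}}w\in\Vsb$ so the codomain is correct.

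I expect no serious analytic difficulty here; the whole argument is a sequence of one-line verifications. The only point requiring care is bookkeeping: tracking the domains and codomains of the restricted maps so that each adjoint is taken between $\Vsb$ and $\Wsb$ rather than on all of $\Hil$, and selecting the appropriate direct-sum decomposition ($\Vsb\oplus\Wsb^{\perp}$ versus $\Wsb\oplus\Wsb^{\perp}$) in each computation. Surjectivity of the restrictions, which might appear to be the delicate step, is automatic once the composition identities of part (i) are established.
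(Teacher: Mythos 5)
Your proof is correct, and it supplies something the paper itself omits: the paper gives no argument for this lemma, stating only that it can be deduced from Lemma 2.1 of \cite{Heineken-Morillas (2018)} together with the definitions of orthogonal and oblique projections. Your direct verification is therefore a self-contained alternative to the paper's citation. The two composition identities $P_{\Wsb}\,\pi_{\Vsb\Wsb^{\perp}}w=w$ for $w\in\Wsb$ and $\pi_{\Vsb\Wsb^{\perp}}P_{\Wsb}v=v$ for $v\in\Vsb$, each obtained by expanding the vector along the decomposition adapted to the operator about to be applied ($\Hil=\Vsb\oplus\Wsb^{\perp}$ in the first case, $\Hil=\Wsb\oplus\Wsb^{\perp}$ in the second), do exactly the work the cited lemma would do; surjectivity and boundedness of the inverses are then automatic, since each inverse is exhibited as the restriction of a bounded operator. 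Your appeal to Theorem~\ref{descompdH} to justify the $\Vsb\leftrightarrow\Wsb$ symmetry is the right way to halve the work, and the adjoint computations in (ii) correctly reduce to the identity $( \pi_{ \Wsb \Vsb^{ \perp } } )^{ \ast } = \pi_{ \Vsb \Wsb^{ \perp } }$ recorded in Section~2, with the codomain bookkeeping ($\pi_{\Vsb\Wsb^{\perp}}w\in\Vsb$) handled properly. What the paper's approach buys is brevity and a pointer to a more general statement in the fusion-frame setting; what yours buys is that the lemma becomes verifiable inside the paper itself, using nothing beyond the standing hypothesis $\Hil=\Vsb\oplus\Wsb^{\perp}$ and the definitions.
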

The concept of angle between two closed subspaces that we will use
is the following:
\begin{defn}
Given $\Wsb$ and $\Vsb$ two closed subspaces of $\Hil $, we define
the \emph{angle from $\Vsb$ to $\Wsb$} as the unique real number
$\theta(\Vsb,\Wsb)\in  [0,\frac{\pi}{2}]$ such that
\begin{equation}\label{angentsub}
\cos\theta(\Vsb,\Wsb) = \inf_{v\in\Vsb,|| v || = 1}||P_{\Wsb}v||.
\qedhere
\end{equation}
\end{defn}
\begin{thm}\cite{Tang (2000)}\label{descompdH}
Let $\Vsb$, $\Wsb$ be closed subspaces of a separable Hilbert space
$\Hil $. Then the following assertions are equivalent:
\begin{itemize}
\item[(i)] $\Hil=\Wsb\oplus\Vsb^{\perp}.$
\item[(ii)] $\Hil =\Vsb \oplus\Wsb^{\perp}.$
\item[(iii)] $\cos\theta(\Vsb,\Wsb)>0 $ and $\cos\theta(\Wsb,\Vsb)>0.$
\end{itemize}
\end{thm}

We have $\norm{\pi_{\Wsb\Vsb^{\perp}} } =
\frac{1}{\cos\theta(\Wsb,\Vsb)}\geq1$ \cite[Lemma 2.4]{BGM (2019)}.

\subsection{Frames}

Frames have been introduced by Duffin  and Schaeffer in
\cite{Duffin-Schaeffer (1952)}. Using a frame, each element of a
Hilbert space has a representation which in general is not unique.
This flexibility makes them attractive for many applications
involving signal expansions.

We will now recall the definition of frame for a closed subspace of
$\Hil$.

\begin{defn}\label{D frame}
Let $\mathcal{W}$ be a closed subspace of $\Hil$ and
$\{f_k\}_{k=1}^{\infty} \subset \mathcal{W}$. Then
$\{f_k\}_{k=1}^{\infty}$ is a \emph{frame} for $\mathcal{W}$, if
there exist constants $0 < \alpha \leq \beta < \infty$ such that
\begin{equation}\label{E cond frame}
\alpha\|f\|^{2} \leq \sum_{k=1}^{\infty}|\langle f,f_{k}\rangle
|^{2} \leq \beta\|f\|^{2}  \text{ for all $f\in \mathcal{W}$.}
\end{equation}
\end{defn}

If the right inequality in (\ref{E cond frame}) is satisfied,
$\{f_k\}_{k=1}^{\infty}$ is a \emph{Bessel sequence} for
$\mathcal{W}$. The constants $\alpha$ and $\beta$ are the \emph{
frame bounds}. In case $\alpha=\beta,$ we call
$\{f_k\}_{k=1}^{\infty}$ an \emph{$\alpha$-tight frame}, and if
$\alpha=\beta=1$ it is a \emph{Parseval frame} for $\mathcal{W}$.

To a Bessel sequence $\mathcal{F}=\{f_k\}_{k=1}^{\infty}$ for
$\mathcal{W}$ we associate the \emph{synthesis operator}

\centerline{$T_{\mathcal{F}}:\ell^2\rightarrow \Hil,$
$T_{\mathcal{F}}\{c_{k}\}_{k=1}^{\infty}=\sum_{k=1}^{\infty}c_{k}f_{k},$}

\noindent the \emph{analysis operator}

\centerline{$T_{\mathcal{F}}^{*}: \Hil\rightarrow \ell^2$,
$T_{\mathcal{F}}^{*}f=\{\langle f,f_{k}\rangle\}_{k=1}^{\infty},$}

\noindent and the \emph{frame operator}

\centerline{$S_{\mathcal{F}}=T_{\mathcal{F}}T_{\mathcal{F}}^{*}$.}

\noindent A Bessel sequence $\mathcal{F}=\{f_k\}_{k=1}^{\infty}$ for
$\mathcal{W}$ is a frame for $\mathcal{W}$ if and only
$\mathcal{R}(T_{\mathcal{F}})=\mathcal{W}$, or equivalently,
$S_{\mathcal{F}}$ is invertible when restricted to $\mathcal{W}$.
Furthermore, $\mathcal{F}$ is an $\alpha$-tight frame for
$\mathcal{W}$ if and only if $S_{\mathcal{F}}=\alpha P_{\Wsb}$. A
\emph{Riesz basis} for $\mathcal{W}$ is a frame for $\mathcal{W}$
which is also a basis. If $\{f_k\}_{k=1}^{\infty}$ is a frame for
$\overline{{\rm span}}\{f_k\}_{k=1}^{\infty}$ we say that
$\{f_k\}_{k=1}^{\infty}$ is a \emph{frame sequence}.

The \emph{optimal upper frame bound}, i.e., the infimum over all
upper frame bounds is $||S_{\mathcal{F}}||=||T_{\mathcal{F}}||^{2}$,
and the \emph{optimal lower frame bound}, i.e., the supremum over
all lower frame bounds is
$||S_{\mathcal{F}}^{\dag}||^{-1}=||T_{\mathcal{F}}^{\dagger}||^{-2}$.
\begin{defn}
Let $\mathcal{W}$ be a closed subspace of $\Hil$. Let
$\mathcal{F}=\{f_k\}_{k=1}^{\infty}$ and
$\mathcal{G}=\{g_{k}\}_{k=1}^{\infty}$ be frames for $\mathcal{W}$.
If $T_{\mathcal{G}}T_{\mathcal{F}}^{*}=P_{\mathcal{W}},$ we say that
$\mathcal{G}$ is a {\em dual frame} of $\mathcal{F}$ in
$\mathcal{W}$.
\end{defn}
The sequence $\{S_{\mathcal{F}}^{\dagger}f_k\}_{k=1}^{\infty}$ is
the \emph{canonical dual frame} of $\{f_k\}_{k=1}^{\infty}$ in
$\mathcal{W}$.

We recall the definition of \emph{oblique dual frames} \cite{Eldar
(2003a), Christensen-Eldar (2004)}:
\begin{defn}
Let $\Hil=\mathcal{W} \oplus \mathcal{V}^{\perp}$. Let $\mathcal{F}
= \{f_k\}_{k=1}^{\infty}$ and $\mathcal{G} = \{
g_k\}_{k=1}^{\infty}$ be frames for $\Wsb $ and $\Vsb $,
respectively. We call $\mathcal{F}$ and $\mathcal{G}$ \emph{oblique
dual frames} if $T_{\mathcal{F}}T_{\mathcal{G}}^{*}
=\pi_{\Wsb\Vsb^{\perp} }$  or $T_{\mathcal{G}} T_{
\mathcal{F}}^{*}=\pi_{\Vsb\Wsb^{\perp} }.$
\end{defn}
In this case we also say that $\{g_k\}_{k=1}^{\infty}$ is an
\emph{oblique dual frame} of $\{f_k\}_{k=1}^{\infty}$ in $\Vsb $ and
that $\{f_k\}_{k=1}^{\infty}$ is an \emph{oblique dual frame} of
$\{g_k \}_{k=1}^{\infty}$ in $\Wsb $. The \emph{canonical oblique
dual frame} of $\{f_k\}_{k=1}^{\infty}$ in $\Vsb $ is
$\mathcal{G}=\{\pi_{\Vsb\Wsb^{\perp}}S_{\mathcal{F}}^{\dag}f_k\}_{k=1}^{\infty}$
and
$T_{\mathcal{G}}=\pi_{\Vsb\Wsb^{\perp}}S_{\mathcal{F}}^{\dag}T_{\mathcal{F}}$
\cite[Theorem 3.2.]{Christensen-Eldar (2004)}.

Let $\Hil=\Wsb\oplus\Vsb^{\perp}$. Let $\mathcal{F} =
\{f_k\}_{k=1}^{\infty}$ and $\mathcal{G} = \{ g_k\}_{k=1}^{\infty}$
be Bessel sequences for $\Wsb $ and $\Vsb $, respectively. We will
use the next equalities that follow directly from the definitions of
the operators:

\centerline{$T_{
\mathcal{F}}^{*}=T_{\mathcal{F}}^{*}\pi_{\Vsb\Wsb^{\perp}}$}

\noindent and

\begin{equation}\label{E TFTGpi}
(T_{\mathcal{F}}T_{\mathcal{G}}^{*})_{\mid\Vsb}=(T_{\mathcal{F}}T_{\mathcal{G}}^{*})_{\mid\Wsb}(\pi_{\Wsb\Vsb^{\perp}})_{\mid\Vsb}.
\end{equation}

For more details about frames we refer the reader to
\cite{Christensen (2016), Daubechies (1992), Young (2001)}.

\subsection{Shift-invariant frame sequences}

For  $f\in\Leb$, its \emph{Fourier transform}, that we denote by
$\widehat{f}$, is defined by

\centerline{$\widehat{f}(\ga) = \int_{-\infty}^{\infty} f(x)
e^{-i2\pi x\ga} \ud x$}

\noindent for $\ga\in\R $. The  Fourier transform can be extended as
a unitary operator in $\Lebcuad$. Given $ k\in \Z $, the
\emph{translation operator} is

\centerline{$ T_k : \Lebcuad \rightarrow \Lebcuad$, $\quad T_k f(x)
= f(x-k),$}

\noindent and $\widehat{T_{k}f}(\ga)= e^{-i2\pi k
\ga}\widehat{f}(\ga)$ holds.

Let $\phi\in\Lebcuad$ be such that $\{T_k\phi\}_{k\in\Z}$ is a
Bessel sequence. Let $f\in\Lebcuad$. Then $f=\sum_{k\in\Z}c_k
T_k\phi$ with $\{c_k\}_{k\in\Z}\in \lcuadent$ if and only if its
Fourier transform is $\widehat{f} = H \widehat{\phi}$, where $H \in
\mathcal{C}_{{\rm per}}^{2}$ and restricted to $[0,1)$ is equal to
$\sum_{k\in\Z} c_k e^{-i2\pi k\cdot}$ \cite[Lemma 9.2.2]{Christensen
(2016)}.

Let $\mathcal{W}:=\overline{\textrm{span}}\{T_k\phi\}_{k\in\Z}$. A
space of this type is called \emph{shift-invariant}. If
$\{T_k\phi\}_{k\in\Z}$ is a frame sequence, then

\centerline{$\mathcal{W}=\{\sum_{k\in\Z}c_k T_k\phi:\{c_k\}_{k\in\Z}
\in \lcuadent\}$.}

For $\phi, \phi_{1}\in\Lebcuad$, we use the bracket notation

\centerline{$[\widehat{\phi},\widehat{\phi}_{1}]: \mathbb{R}
\rightarrow \mathbb{R}, \quad
[\widehat{\phi},\widehat{\phi}_{1}](\gamma) = \sum_{k\in
\mathbf{Z}}\widehat{\phi}(\gamma+k)\overline{\widehat{\phi}_{1}(\gamma
+ k)}$,}

\noindent we denote

\centerline{$\Phi : \mathbb{R} \rightarrow \mathbb{R}, \quad\Phi(
\gamma) = \sum_{k\in\Z}|\widehat{\phi}(\gamma + k)|^2$}

\noindent and $N(\Phi)=\set{\gamma:\Phi(\gamma) = 0}$. The functions
$[\widehat{\phi},\widehat{\phi}_{1}]$ and $\Phi$ belong to
$\mathcal{C}_{{\rm per}}^{1}$.

\begin{thm}\label{T Benedetto}
Let $\phi\in\Lebcuad$. Then
\begin{itemize}
\item[(i)]
$\{T_{k} \phi\}_{k\in \mathbb{Z}}$ is a Bessel sequence with bound
$\beta $ if and only if $\Phi(\ga) \leq \beta$ a.e. $\ga \in
\mathbb{R}$.
\item[(ii)]
$\{T_{k} \phi\}_{k\in \mathbb{Z}}$ is a frame sequence with bounds
$\alpha, \beta $ if and only if $\alpha \leq\Phi(\ga) \leq \beta $
a.e. $\ga\in N(\Phi)^{c}$. \qedhere
\end{itemize}
\end{thm}

Let $\phi, \phi_{1}\in\Lebcuad$ be such that $\{ T_{k} \phi\}_{k\in
\mathbb{Z}}$ and $\{T_{k}\phi_{1} \}_{k \in \mathbb{Z}}$ are Bessel
sequences with bounds $\beta $ and $\widetilde{\beta}$,
respectively. By Theorem~\ref{T Benedetto}(i),
\begin{equation}\label{E cota superior BB}
[\,|\widehat{\phi}|,|\widehat{\phi}_{1}|\,](\ga) \leq
\sqrt{\beta\widetilde{\beta}}\,\, \text{ a.e. } \gamma\in\mathbb{R}.
\end{equation}
\begin{prop}\label{cond.L2.en.suma.dir}\cite{Christensen-Eldar
(2004)} Let $\phi, \phi_{1}\in\Lebcuad$, and assume that
$\{T_k\phi\}_{k\in \mathbb{Z}}$ and $\{T_k\phi_{1} \}_{k \in
\mathbb{Z}}$ are frame sequences. Let
$\mathcal{W}:=\overline{\textrm{span}}\{T_k\phi\}_{k\in\Z}$ and
$\mathcal{V}:=\overline{\textrm{span}}\{T_k\phi_{1}\}_{k \in \Z}$.
Then the following are equivalent:
\begin{itemize}
\item[(i)]
$\Lebcuad =\Wsb \oplus\Vsb^{\perp}$.
\item[(ii)]
$N(\Phi) = N(\Phi_{1})$ and there exists a constant $c>0$ such that

\centerline{$|\,[\widehat{\phi},\widehat{\phi}_{1}](\ga)\,|\geq c$
a.e. $\ga\in N(\Phi)^{c}$.}
\end{itemize}
\end{prop}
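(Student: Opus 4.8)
The plan is to use the equivalence (i) $\iff$ (iii) of Theorem~\ref{descompdH}, which reduces (i) to the two angle conditions $\cos(\Vsb,\Wsb)>0$ and $\cos(\Wsb,\Vsb)>0$, and then to compute each cosine explicitly on the Fourier side by exploiting the fiber structure of singly generated shift-invariant spaces. First I would record the frame-sequence estimates from Theorem~\ref{T Benedetto}(ii): there are constants $0<\alpha\le\beta$ and $0<\widetilde{\alpha}\le\widetilde{\beta}$ with $\alpha\le\Phi\le\beta$ a.e.\ on $\{\Phi\neq0\}$ and $\widetilde{\alpha}\le\widetilde{\Phi}\le\widetilde{\beta}$ a.e.\ on $\{\widetilde{\Phi}\neq0\}$, where $\widetilde{\Phi}(\ga)=\sum_{k\in\Z}|\widehat{\widetilde{\phi}}(\ga+k)|^2$. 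The key object is the cross bracket $\Theta(\ga)=\sum_{k\in\Z}\widehat{\phi}(\ga+k)\overline{\widehat{\widetilde{\phi}}(\ga+k)}$, whose modulus is exactly the quantity appearing in (ii); the fiberwise \Cau--Schwarz inequality gives $|\Theta|^2\le\Phi\widetilde{\Phi}$, so $\Theta(\ga)=0$ whenever $\Phi(\ga)=0$ or $\widetilde{\Phi}(\ga)=0$.

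Next I would establish the Fourier-domain formula for the orthogonal projection onto a singly generated shift-invariant space. Since fiberwise at $\ga$ the space $\Wsb$ is the set of multiples of $\{\widehat{\phi}(\ga+k)\}_{k\in\Z}\in\lcuadent$, one obtains
\[
\widehat{P_{\Wsb}f}(\ga)=\frac{[\widehat{f},\widehat{\phi}](\ga)}{\Phi(\ga)}\,\widehat{\phi}(\ga)\quad\text{on }\{\Phi\neq0\},
\]
and $0$ on $\{\Phi=0\}$, where $[\widehat{f},\widehat{\phi}](\ga)=\sum_{k\in\Z}\widehat{f}(\ga+k)\overline{\widehat{\phi}(\ga+k)}$. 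For $v\in\Vsb$, writing $\widehat{v}=G\,\widehat{\widetilde{\phi}}$ with $G$ $1$-periodic in $L^2(0,1)$, periodicity gives $|[\widehat{v},\widehat{\phi}]|=|G|\,|\Theta|$, and a direct computation using $\sum_{k\in\Z}|\widehat{\phi}(\ga+k)|^2=\Phi(\ga)$ yields $\|P_{\Wsb}v\|^2=\int_0^1\frac{|G|^2|\Theta|^2}{\Phi}\chi_{\{\Phi\neq0\}}\,\ud\ga$ and $\|v\|^2=\int_0^1|G|^2\widetilde{\Phi}\,\ud\ga$. As $G$ ranges over all periodic weights and the infimum over weights of a weighted average of a nonnegative function equals its essential infimum (localize $G$ to a small positive-measure set), the definition \eqref{angentsub} gives
\[
\cos^2(\Vsb,\Wsb)=\operatorname*{ess\,inf}_{\{\widetilde{\Phi}\neq0\}}\frac{|\Theta|^2}{\Phi\,\widetilde{\Phi}}\,\chi_{\{\Phi\neq0\}},
\]
and symmetrically $\cos^2(\Wsb,\Vsb)=\operatorname*{ess\,inf}_{\{\Phi\neq0\}}\frac{|\Theta|^2}{\Phi\widetilde{\Phi}}\chi_{\{\widetilde{\Phi}\neq0\}}$.

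Finally I would read off (ii) from these two identities. Positivity of $\cos(\Vsb,\Wsb)$ forces $\{\widetilde{\Phi}\neq0\}\subseteq\{\Phi\neq0\}$ up to a null set (otherwise the factor $\chi_{\{\Phi\neq0\}}$ makes the essential infimum vanish), while positivity of $\cos(\Wsb,\Vsb)$ forces the reverse inclusion; together these give $\{\Phi=0\}=\{\widetilde{\Phi}=0\}$, the first part of (ii). On the common support $\Omega=\{\Phi\neq0\}=\{\widetilde{\Phi}\neq0\}$ the frame bounds confine $\Phi\widetilde{\Phi}$ between the positive constants $\alpha\widetilde{\alpha}$ and $\beta\widetilde{\beta}$, so both essential infima are positive exactly when $|\Theta|\ge c$ a.e.\ on $\Omega$ for some $c>0$, which is the second part of (ii). Conversely, (ii) makes both essential infima positive, hence both cosines positive, and Theorem~\ref{descompdH} delivers (i).

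I expect the main obstacle to be the second step: justifying the projection formula and the passage from $\cos^2$ to an essential infimum rigorously. In particular one must handle the non-injectivity of the correspondence $v\leftrightarrow G$ for a frame sequence that need not be a \Hlbrt\ (Riesz) basis --- the map is only well behaved on the support $\Omega$, so representatives $G$ differing on $\{\widetilde{\Phi}=0\}$ must be shown not to affect the quotient --- and one must verify that the weights $G$ can indeed be localized to arbitrary positive-measure subsets of $\Omega$ while keeping $\widehat{v}=G\,\widehat{\widetilde{\phi}}\in\Vsb$. The remaining manipulations with the bracket products and the frame-bound pinching of $\Phi\widetilde{\Phi}$ are then routine.
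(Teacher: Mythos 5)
Proposition~\ref{cond.L2.en.suma.dir} is not proved in this paper at all: it is imported from \cite{Christensen-Eldar (2004)}, so there is no internal proof to compare yours against, and your argument stands as a self-contained substitute for the citation. It is correct, and it runs entirely on machinery the paper itself quotes: your first step is exactly the equivalence (i)$\Leftrightarrow$(iii) of Theorem~\ref{descompdH}; the pinching $\alpha\le\Phi\le\beta$ and $\widetilde{\alpha}\le\widetilde{\Phi}\le\widetilde{\beta}$ on the supports is Theorem~\ref{T Benedetto}(ii); and the orthogonal projection formula you need is precisely \cite[Theorem 2.9]{de Boor-DeVore-Ron (1994)} --- cite that source rather than the paper's Theorem~\ref{caractproyoblic}, because the proof of the latter invokes Proposition~\ref{cond.L2.en.suma.dir} itself, so quoting it here would look circular (its $\Wsb=\Vsb$ specialization is in fact independent of the proposition, but the safe reference is de Boor--DeVore--Ron). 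Two further remarks. First, your identity $\cos^{2}(\Vsb,\Wsb)=\mathrm{ess\,inf}_{\{\widetilde{\Phi}\neq0\}}\tfrac{|\Theta|^{2}}{\Phi\widetilde{\Phi}}\chi_{\{\Phi\neq0\}}$, with $\Theta(\ga)=\sum_{k\in\Z}\widehat{\phi}(\ga+k)\overline{\widehat{\widetilde{\phi}}(\ga+k)}$, is sharper than what the paper records: (\ref{E cos marco}) is stated only as a lower bound, with equality attributed via \cite{Unser-Aldroubi (1994)} to the Riesz-basis case, whereas your localization argument (take $G=\chi_{E}$ on a positive-measure set where the quotient is within $\lambda$ of its essential infimum) yields equality for arbitrary frame sequences. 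Second, the obstacles you flag at the end dissolve using facts already in the preliminaries: since $\{T_{k}\widetilde{\phi}\}_{k\in\Z}$ is a frame sequence, $\Vsb$ is exactly $\{v:\widehat{v}=G\widehat{\widetilde{\phi}},\ G\in L^{2}(0,1)\}$, so localizing $G$ to any positive-measure subset of $\{\widetilde{\Phi}\neq0\}$ is legitimate; and if $G,G'$ represent the same $v$, periodizing $|G-G'|^{2}|\widehat{\widetilde{\phi}}|^{2}=0$ gives $|G-G'|^{2}\widetilde{\Phi}=0$ a.e., so representatives agree a.e.\ on $\{\widetilde{\Phi}\neq0\}$; this suffices because Cauchy--Schwarz gives $\tfrac{|\Theta|^{2}}{\Phi}\chi_{\{\Phi\neq0\}}\le\widetilde{\Phi}$, i.e.\ both integrands in your quotient are supported on that set, so neither norm is affected by the ambiguity.
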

Let $\Wsb$ and $\Vsb$ be two closed subspaces of $\Lebcuad$ such
that $\Lebcuad =\Wsb \oplus\Vsb^{\perp}$. In \cite[Proposition
4.8]{Christensen-Eldar (2004)} it is shown that if
$\{T_k\phi\}_{k\in\Z}$ is a frame for $\Wsb$ and
$\{T_k\phi_{1}\}_{k\in\Z}$ is a frame for $\Vsb$ then
\begin{equation}\label{E cos marco}
\cos\theta(\Wsb,\Vsb)\geq ess \inf_{\gamma\in N(\Phi)^{c}}
\frac{|\,[\widehat{\phi},\widehat{\phi}_{1}](\ga)\,|}{
\sqrt{\Phi(\gamma)\Phi_{1}(\gamma)}}.
\end{equation}

\section{Definition and fundamental properties}

In this section we present the concept of approximate oblique
duality and explore some of its properties. First we introduce the
following definition of approximate oblique dual frames:
\begin{defn}\label{D marcos duales oblicuos aproximados}
Let $\Hil=\mathcal{W} \oplus \mathcal{V}^{\perp}$. Let $\mathcal{F}
= \{f_k\}_{k=1}^{\infty}$ and $\mathcal{G}= \{g_k\}_{k=1}^{\infty}$
be frames for $\Wsb$ and $\Vsb$, respectively. Let $\e\geq0$. We say
that $\mathcal{F}$ and $\mathcal{G}$ are \emph{$\e$-approximate
oblique dual frames} if
\begin{equation}\label{e001}
\norm{\pi_{\Wsb\Vsb^{\perp} } - T_{\mathcal{F}}T_{\mathcal{G}}^{
*}} \leq \e \textrm{  or  } \norm{\pi_{\Vsb\Wsb^{\perp}} - T_{\mathcal{G}} T_{\mathcal{F}}^{ *}} \leq \e. \qedhere
\end{equation}
\end{defn}
In this case we also say that $\{g_k\}_{k=1}^{\infty}$ is an
\emph{$\e$-approximate oblique dual frame} of $\{f_k
\}_{k=1}^{\infty}$ in $\Vsb$ and that $\{f_k\}_{k=1}^{\infty}$ is an
\emph{$\e$-approximate oblique dual frame} of $\{g_k
\}_{k=1}^{\infty}$ in $\Wsb$. If $\Vsb =\Wsb$, then $\mathcal{F}$
and $\mathcal{G}$ will be called $\e$-approximate dual frames in
$\Wsb$.

Note that the conditions of the definition can be written as
\begin{equation}\label{E D 1}
\norm{\pi_{\Wsb\Vsb^{\perp}}f - \sum_{ k=1 }^{\infty} \langle f, g_k
\rangle f_k }\leq \e\norm{f } \text{ for all } f\in\Hil
\end{equation}
and
\begin{equation}\label{E D 2}
\norm{\pi_{\Vsb\Wsb^{\perp}}f - \sum_{ k=1 }^{\infty} \langle f, f_k
\rangle g_k } \leq \e\norm{f } \text{ for all } f\in\Hil,
\end{equation}
respectively.
\begin{rem}\label{Obs definicion de mdoa}
\begin{enumerate}
\item[(i)]
If $\e = 0 $, $\mathcal{F}$ and $\mathcal{G}$ are \emph{oblique dual
frames}. If $\Vsb =\Wsb = \Hil $ and $\e < 1 $, then $\mathcal{F}$
and $\mathcal{G}$ are \emph{approximate dual frames} as defined in
\cite{Christensen-Laugesen (2010)}.

\item[(ii)]\label{vwh}
If $\e < 1 $, by Neumann's Theorem
$(T_{\mathcal{F}}T_{\mathcal{G}}^{*}) _{|\Wsb}$ is an invertible
operator from $\Wsb$ to $\Wsb$. So, by (\ref{E TFTGpi}) and
Lemma~\ref{lemma 001}(i),
$(T_{\mathcal{F}}T_{\mathcal{G}}^{*})_{|\Vsb}$ is an invertible
operator from $\Vsb$ to $\Wsb$.

\item[(iii)]
If $\e < 1 $, it is sufficient that $\{f_k\}_{k=1}^{\infty}$ and
$\{g_k\}_{k=1}^{\infty}$ are Bessel sequences, since $\mathcal{R}(
T_{\mathcal{F}}) =\Wsb$ by (ii). Hence $\{f_k\}_{k=1}^{\infty}$ is a
frame for $\Wsb$. Analogously $\{g_k\}_{k=1}^{\infty}$ is a frame
for $\Vsb$.

\item[(iv)] Assume that $\e<1$. If $f\in\Wsb$, by (ii),
$$f=({T_{\mathcal{F}}(T_{\mathcal{G}}^{*})} _{|\Wsb})^{-1} T_{\mathcal{F}}T_{\mathcal{G}}^{*} f =
\sum \langle f, g_k \rangle ({T_{\mathcal{F}}(T_{\mathcal{G}}^{*})}
_{|\Wsb})^{-1} f_k .$$ Similarly, for $f\in\Vsb$ we have
$$f=({T_{\mathcal{G}} (T_{\mathcal{F}}^{*})}_{ |\Vsb })^{ -1
} T_{\mathcal{G}} T_{\mathcal{F}}^{*} f = \sum \langle f, f_k
\rangle ({T_{\mathcal{G}} (T_{\mathcal{F}}^{*})}_{ |\Vsb })^{ -1 }
g_k. $$ Hence $\{({T_{\mathcal{F}}(T_{\mathcal{G}}^{*})}
_{|\Wsb})^{-1}
 f_k\}_{k=1}^{\infty}$ and $\{g_k\}_{k=1}^{\infty}$ ($\{f_k\}_{k=1}^{\infty}$
and $\{({T_{\mathcal{G}} (T_{\mathcal{F}}^{*})}_{ |\Vsb })^{ -1 }
g_k\}_{k=1}^{\infty}$) are oblique dual frames by \cite[Lemma
3.1(i)]{Christensen-Eldar (2004)}.

\end{enumerate}

\end{rem}

The next proposition tells us that approximate oblique duality is
preserved under the action of unitary operators.
\begin{prop}
Let $\Hil=\mathcal{W} \oplus \mathcal{V}^{\perp}$. Let $\mathcal{F}
= \{f_k\}_{k=1}^{\infty} \subset\Wsb$ and $\mathcal{G} =
\{g_k\}_{k=1}^{\infty} \subset\Vsb $ be $\e$-approximate oblique
dual frames with $\e \geq0 $. Let $ U : \Hil \rightarrow \Hil $ a
unitary operator. Then $ U \mathcal{F} = \{U f_k \}_{k=1}^{\infty}$
and $ U \mathcal{G} = \{U g_k\}_{k=1}^{\infty}$ are $\e$-approximate
oblique dual frames. \qedhere
\end{prop}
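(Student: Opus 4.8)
The plan is to reduce the whole statement to a conjugation by $U$ and then exploit that conjugation by a unitary operator is an isometry on the space of bounded operators. First I would record how the ambient data transform. Since $U$ is unitary it preserves orthogonal complements, so $(U\Wsb)^{\perp} = U\Wsb^{\perp}$ and $(U\Vsb)^{\perp} = U\Vsb^{\perp}$; consequently
\[
U\Vsb \oplus (U\Wsb)^{\perp} = U(\Vsb \oplus \Wsb^{\perp}) = U\Hil = \Hil,
\]
so the standing hypothesis $\Hil = \Vsb \oplus \Wsb^{\perp}$ is inherited by the images $U\Vsb$ and $U\Wsb$, and the oblique projection $\pi_{U\Wsb\,(U\Vsb)^{\perp}}$ is well defined. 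A routine argument (unitaries map frames to frames) shows that $U\mathcal{F}$ is a frame for $U\Wsb$ and $U\mathcal{G}$ is a frame for $U\Vsb$, so the two candidate sequences are of the right type.

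Next I would verify the operator identities governing how synthesis and analysis transform. Directly from the definitions one gets $T_{U\mathcal{F}} = U\,T_{\mathcal{F}}$ (since $\sum_k c_k U f_k = U\sum_k c_k f_k$) and $T_{U\mathcal{G}}^{*} = T_{\mathcal{G}}^{*}\,U^{*}$ (since $\langle f, U g_k\rangle = \langle U^{*}f, g_k\rangle$). Combining these yields the key factorization
\[
T_{U\mathcal{F}}\,T_{U\mathcal{G}}^{*} = U\,T_{\mathcal{F}}\,T_{\mathcal{G}}^{*}\,U^{*}.
\]

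The one point requiring care is the analogous transformation rule for the oblique projection, namely $\pi_{U\Wsb\,(U\Vsb)^{\perp}} = U\,\pi_{\Wsb\Vsb^{\perp}}\,U^{*}$. I would establish this from the defining properties of the oblique projection: the operator $U\,\pi_{\Wsb\Vsb^{\perp}}\,U^{*}$ acts as the identity on $U\Wsb$ (for $g = Uf$ with $f\in\Wsb$, it returns $U\pi_{\Wsb\Vsb^{\perp}}f = Uf$) and as zero on $(U\Vsb)^{\perp} = U\Vsb^{\perp}$ (for $g = Uf$ with $f\in\Vsb^{\perp}$, it returns $U\pi_{\Wsb\Vsb^{\perp}}f = 0$), and by the uniqueness of the oblique projection this identifies it with $\pi_{U\Wsb\,(U\Vsb)^{\perp}}$. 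The complement facts from the first paragraph are exactly what make $(U\Vsb)^{\perp} = U\Vsb^{\perp}$ available here, so this is where the unitarity is genuinely used; I expect this identity to be the only nonformal step.

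Finally I would assemble the estimate. Subtracting the two factorized expressions gives
\[
\pi_{U\Wsb\,(U\Vsb)^{\perp}} - T_{U\mathcal{F}}\,T_{U\mathcal{G}}^{*} = U\bigl(\pi_{\Wsb\Vsb^{\perp}} - T_{\mathcal{F}}\,T_{\mathcal{G}}^{*}\bigr)U^{*},
\]
and since $\norm{UAU^{*}} = \norm{A}$ for every bounded operator $A$ (conjugation by a unitary preserves the operator norm), the left-hand side has norm equal to $\norm{\pi_{\Wsb\Vsb^{\perp}} - T_{\mathcal{F}}\,T_{\mathcal{G}}^{*}} \leq \e$ by hypothesis. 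If instead the second inequality of Definition~\ref{D marcos duales oblicuos aproximados} is the one assumed, the identical computation with the roles of $(\mathcal{F},\Wsb)$ and $(\mathcal{G},\Vsb)$ interchanged gives $\norm{\pi_{U\Vsb\,(U\Wsb)^{\perp}} - T_{U\mathcal{G}}\,T_{U\mathcal{F}}^{*}} \leq \e$. In either case $U\mathcal{F}$ and $U\mathcal{G}$ satisfy \eqref{e001}, which is the claim.
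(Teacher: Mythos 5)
Your proof is correct and follows essentially the same route as the paper's: both rest on the identities $T_{U\mathcal{F}}=UT_{\mathcal{F}}$, $\pi_{U\Wsb\,(U\Vsb)^{\perp}}=U\pi_{\Wsb\Vsb^{\perp}}U^{*}$, and the invariance of the operator norm under conjugation by a unitary. The only (immaterial) difference is that you obtain the decomposition $\Hil=U\Wsb\oplus(U\Vsb)^{\perp}$ and the projection identity directly from $U$ being a bijection preserving orthogonal complements and from the uniqueness of the oblique projection, whereas the paper verifies them through the decomposition $I=U\pi_{\Wsb\Vsb^{\perp}}U^{*}+U\pi_{\Vsb^{\perp}\Wsb}U^{*}$ and a separate intersection argument.
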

\begin{proof}
By \cite[Corollary 5.3.4]{Christensen (2016)}, the sequences $ U
\mathcal{F}$ and $ U \mathcal{G}$ are frames for $ U\Wsb$ and $ U
\Vsb $, respectively. We have $ T_{ U \mathcal{ F}} = U T_{
\mathcal{ F}}$ and $ T_{ U \mathcal{ G}} = U T_{\mathcal{ G}}$.
Since $\Hil=\Wsb\oplus\Vsb^{\perp}$ and $ I = U U^{*}$, $I = U
\pi_{\Wsb\Vsb^{\perp}} U^{*} + U \pi_{\Vsb^{\perp}\Wsb }U^{*}$ and
$\Hil = U\Wsb \oplus (U\Vsb)^{\perp}$. From here, we have $\pi_{
U\Wsb (U\Vsb)^{\perp}} = U \pi_{\Wsb\Vsb^{\perp}} U^{\ast}$. Then
\begin{equation*}
\norm{\pi_{ U\Wsb (U\Vsb)^{\perp}} - T_{ U \mathcal{ F}} T_{ U
\mathcal{ G}}^{\ast}}= \norm{U (\pi_{\Wsb\Vsb^{\perp}} - T_{
\mathcal{ F}} T_{\mathcal{ G}}^{\ast}) U^{ *}} = \norm{\pi_{\Wsb
\Vsb^{\perp}} - T_{ \mathcal{ F}} T_{\mathcal{ G}}^{\ast}} \leq \e,
\end{equation*}
and so $ U \mathcal{F}$ and $ U \mathcal{G}$ are $\e$-approximate
oblique dual frames. \qedhere
\end{proof}
Now we briefly discuss the interpretation of the approximate oblique
duality in the context of signal processing theory. Let
$\mathcal{F}$ be a frame for $\mathcal{W}$. Assume that the samples
$T_{\mathcal{F}}^{*}f=\{\langle f,f_{k}\rangle)\}_{k=1}^{\infty}$ of
an unknown signal $f\in \mathcal{H}$ are given. Our goal is the
reconstruction of $f$ from these samples using a frame $\mathcal{G}$
for $\mathcal{V}$ in such a way that the reconstruction $f_{r}\in
\mathcal{V}$ is a good approximation of $f$. This means on one hand
that there don't exist two different signals in $\Vsb$ with the same
samples. On the other, that the samples of the reconstruction
$f_{r}$ and those of the original signal $f$ are close. Specifically
the following two conditions are required. Let $\e  \geq 0 $,
\begin{enumerate}
\item[(i)]
\textit{Uniqueness of the reconstruction}: If $ f, g\in\Vsb$ and $
T_{ \mathcal{ F}}^{*} f = T_{ \mathcal{ F}}^{*} g $, then $f=g $.

\item[(ii)]
\textit{$\e $-consistent reconstruction}: For every $f\in \Hil $,
$\norm{T_{ \mathcal{ F}}^{*} f_{r} - T_{ \mathcal{ F}}^{*} f } \leq
\e \norm{f }$.
\end{enumerate}
If (ii) holds, we say that $ f_{r}$ is an \emph{ $\e$-consistent
reconstruction} of $ f $ in $\Vsb$. Note that a $ 0 $-consistent
reconstruction is a \emph{consistent reconstruction} as defined in
\mbox{\cite{Eldar (2003a), Eldar (2003b), Eldar-Werther (2005)}}.
The condition (i) is equivalent to $\Vsb \cap\Wsb^{\perp} = \{0 \}$.

If $\Hil=\mathcal{W} \oplus \mathcal{V}^{\perp}$, using an oblique
dual frame of $\mathcal{ F }$ in $\Vsb$, we can obtain the following
bound for $\norm{f_{r} - \pi_{\Vsb\Wsb^{\perp}}f }$:

\begin{prop}\label{P cota reconstr consist dual G}
Let $\Hil=\mathcal{W} \oplus \mathcal{V}^{\perp}$. Let $\mathcal{F}
= \{f_k\}_{k=1}^{\infty}$ be a frame for $\Wsb$ and $\mathcal{G}$ be
an oblique dual frame of $\mathcal{ F }$ in $\Vsb$. Let $\e\geq0$.
If $f\in \Hil$ and $ f_{r}$ is an $\e $-consistent reconstruction of
$f$ in $\Vsb$, then

\centerline{$\norm{f_{r} - \pi_{\Vsb\Wsb^{\perp}}f } \leq \e \norm{
T_{\mathcal{ G}} }\norm{f }$.}\qedhere
\end{prop}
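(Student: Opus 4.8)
The plan is to exploit the defining identity of an oblique dual frame together with the fact that the reconstruction $f_r$ lives in $\Vsb$, so that the entire difference $f_r - \pi_{\Vsb\Wsb^{\perp}}f$ can be pushed through the synthesis operator $T_{\mathcal{G}}$ and then controlled by the $\e$-consistency hypothesis.

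First I would record the two facts I intend to use. Since $\mathcal{G}$ is an oblique dual frame of $\mathcal{F}$ in $\Vsb$, the case $\e=0$ of Definition~\ref{D marcos duales oblicuos aproximados} gives $T_{\mathcal{G}}T_{\mathcal{F}}^{*}=\pi_{\Vsb\Wsb^{\perp}}$. Moreover, because $f_r$ is a reconstruction in $\Vsb$ we have $f_r\in\Vsb=\mathcal{R}(\pi_{\Vsb\Wsb^{\perp}})$, and the oblique projection acts as the identity on its range, so $\pi_{\Vsb\Wsb^{\perp}}f_r=f_r$. Combining these, I would rewrite the target quantity as
\begin{align*}
f_r-\pi_{\Vsb\Wsb^{\perp}}f
&=\pi_{\Vsb\Wsb^{\perp}}f_r-\pi_{\Vsb\Wsb^{\perp}}f
=T_{\mathcal{G}}T_{\mathcal{F}}^{*}f_r-T_{\mathcal{G}}T_{\mathcal{F}}^{*}f\\
&=T_{\mathcal{G}}\bigl(T_{\mathcal{F}}^{*}f_r-T_{\mathcal{F}}^{*}f\bigr).
\end{align*}
Taking norms and using submultiplicativity of the operator norm yields $\norm{f_r-\pi_{\Vsb\Wsb^{\perp}}f}\leq\norm{T_{\mathcal{G}}}\,\norm{T_{\mathcal{F}}^{*}f_r-T_{\mathcal{F}}^{*}f}$, and the $\e$-consistency bound (ii), namely $\norm{T_{\mathcal{F}}^{*}f_r-T_{\mathcal{F}}^{*}f}\leq\e\norm{f}$, closes the estimate.

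There is no serious obstacle here: once the right rewriting is in place the computation is essentially a one-line factorization. The only point requiring care is the observation that $f_r\in\Vsb$ forces $\pi_{\Vsb\Wsb^{\perp}}f_r=f_r$, which is precisely what lets me replace $f_r$ by $\pi_{\Vsb\Wsb^{\perp}}f_r$ and then substitute the dual-frame identity $\pi_{\Vsb\Wsb^{\perp}}=T_{\mathcal{G}}T_{\mathcal{F}}^{*}$ on both terms simultaneously, so that the difference collapses to $T_{\mathcal{G}}$ applied to the sampling error.
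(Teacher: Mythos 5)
Your proof is correct and follows essentially the same route as the paper's: both exploit $T_{\mathcal{G}}T_{\mathcal{F}}^{*}=\pi_{\Vsb\Wsb^{\perp}}$ together with $f_r\in\Vsb$, factor the difference through $T_{\mathcal{G}}$, and finish with the $\e$-consistency bound. If anything, your version is slightly cleaner, since applying the dual identity to $f$ directly avoids the paper's implicit use of $T_{\mathcal{F}}^{*}\pi_{\Vsb\Wsb^{\perp}}f=T_{\mathcal{F}}^{*}f$.
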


\begin{proof}
Using that $T_{\mathcal{ G}}T_{ \mathcal{
F}}^{*}=\pi_{\Vsb\Wsb^{\perp}}$ and $f_{r}\in\Vsb$, we have

\centerline{$\norm{f_{r} - \pi_{\Vsb\Wsb^{\perp}}f } = \norm{
T_{\mathcal{ G}} T_{ \mathcal{ F}}^{\ast} f_{r} - T_{\mathcal{ G}}
T_{ \mathcal{ F}}^{\ast} f }\leq \norm{T_{\mathcal{ G}} } \norm{T_{
\mathcal{ F}}^{\ast} f_{r} - T_{ \mathcal{ F}}^{\ast} f } \leq
\e\norm{T_{\mathcal{ G}} } \norm{f }.$} \qedhere
\end{proof}
The following result relates an $\e$-consistent reconstruction $
f_{r}$ to the oblique projection $\pi_{\Vsb\Wsb^{\perp}}f$ of $f$.
\begin{thm}\label{T consistent reconstruction}
Let $\Hil=\mathcal{W} \oplus \mathcal{V}^{\perp}$. Let $\mathcal{F}$
be a frame for $\Wsb$, $f\in \Hil$ and $\e\geq 0 $. The following
holds:
\begin{enumerate}
\item[(i)] If $ f_{r}$ is an  $\e $-consistent reconstruction of
$f$ in $\Vsb$, then

\centerline{$\norm{f_{r} - \pi_{\Vsb\Wsb^{\perp}}f } \leq \e \norm{
\pi_{\Vsb\Wsb^{\perp}} } \norm{T_{ \mathcal{ F}}^{ \dagger}} \norm{
f }$.}

\item[(ii)] If $\norm{f_{r} - \pi_{\Vsb\Wsb^{\perp}}f } \leq \frac{
\e }{ \norm{T_{ \mathcal{ F}}}} \norm{f }$, then $ f_{r}$ is an $\e
$-consistent reconstruction of $f$ in $\Vsb$.
\end{enumerate}
\end{thm}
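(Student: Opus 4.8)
The plan is to prove the two implications separately, in each case exploiting the identity $\pi_{\Vsb\Wsb^\perp} = T_{\mathcal{G}} T_{\mathcal{F}}^*$ that holds because $\mathcal{G}$ is an \emph{oblique dual} of $\mathcal{F}$ in $\Vsb$ (the $\e = 0$ case), together with the standard operator-norm estimate relating analysis operators and consistency defects. The key observation throughout is that $T_{\mathcal{F}}^* \pi_{\Vsb\Wsb^\perp} f = T_{\mathcal{F}}^* f$ for every $f \in \Hil$, which follows from $\pi_{\Vsb\Wsb^\perp} f - f \in \Wsb^\perp = \mathcal{N}(T_{\mathcal{F}}^*)$ (as $\mathcal{F} \subset \Wsb$). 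This lets me freely replace $f$ by $\pi_{\Vsb\Wsb^\perp} f$ inside $T_{\mathcal{F}}^*$.

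\medskip

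For part (i), I would start from the vector $f_r - \pi_{\Vsb\Wsb^\perp} f$, which lies in $\Vsb$ since both $f_r \in \Vsb$ and $\pi_{\Vsb\Wsb^\perp} f \in \Vsb$. Because $\mathcal{G}$ is an oblique dual of $\mathcal{F}$ in $\Vsb$, we have $(T_{\mathcal{G}} T_{\mathcal{F}}^*)_{|\Vsb} = (\pi_{\Vsb\Wsb^\perp})_{|\Vsb} = I_{\Vsb}$, so $T_{\mathcal{G}}$ acts as a left inverse of $T_{\mathcal{F}}^*$ on $\Vsb$; concretely $f_r - \pi_{\Vsb\Wsb^\perp} f = T_{\mathcal{G}} T_{\mathcal{F}}^*(f_r - \pi_{\Vsb\Wsb^\perp} f)$. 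The cleaner route, however, is to write $f_r - \pi_{\Vsb\Wsb^\perp} f = T_{\mathcal{F}}^\dagger\, T_{\mathcal{F}}^*(f_r - \pi_{\Vsb\Wsb^\perp} f)$, using that $T_{\mathcal{F}}^\dagger T_{\mathcal{F}}^*$ is the orthogonal projection onto $\mathcal{R}(T_{\mathcal{F}}) = \Wsb$ and that applying $\pi_{\Vsb\Wsb^\perp}$ afterward recovers the original $\Vsb$-vector. Then I would take norms, pull out $\norm{T_{\mathcal{F}}^\dagger}$, and use $\norm{T_{\mathcal{F}}^* f_r - T_{\mathcal{F}}^* \pi_{\Vsb\Wsb^\perp} f} = \norm{T_{\mathcal{F}}^* f_r - T_{\mathcal{F}}^* f} \leq \e\norm{f}$ by the consistency hypothesis and the identity above. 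The factor $\norm{\pi_{\Vsb\Wsb^\perp}}$ enters precisely when undoing the orthogonal projection back to $\Vsb$.

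\medskip

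For part (ii), the direction is reversed and simpler: assuming the norm bound on $f_r - \pi_{\Vsb\Wsb^\perp} f$, I compute the consistency defect directly as
\[
\norm{T_{\mathcal{F}}^* f_r - T_{\mathcal{F}}^* f} = \norm{T_{\mathcal{F}}^* f_r - T_{\mathcal{F}}^* \pi_{\Vsb\Wsb^\perp} f} = \norm{T_{\mathcal{F}}^*(f_r - \pi_{\Vsb\Wsb^\perp} f)},
\]
again using $T_{\mathcal{F}}^* f = T_{\mathcal{F}}^* \pi_{\Vsb\Wsb^\perp} f$. Bounding by $\norm{T_{\mathcal{F}}^*}\,\norm{f_r - \pi_{\Vsb\Wsb^\perp} f} = \norm{T_{\mathcal{F}}}\,\norm{f_r - \pi_{\Vsb\Wsb^\perp} f}$ and inserting the hypothesis gives the defect $\leq \e\norm{f}$, which is exactly $\e$-consistency.

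\medskip

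The main obstacle I anticipate is getting the operator factorization in part (i) correct so that the two constants $\norm{\pi_{\Vsb\Wsb^\perp}}$ and $\norm{T_{\mathcal{F}}^\dagger}$ both appear and no more; one must be careful that $T_{\mathcal{F}}^\dagger$ inverts $T_{\mathcal{F}}^*$ only on $\Wsb$, so the argument genuinely needs to route the $\Vsb$-vector $f_r - \pi_{\Vsb\Wsb^\perp} f$ through $\Wsb$ via $T_{\mathcal{F}}^*$ and back to $\Vsb$, which is where $\norm{\pi_{\Vsb\Wsb^\perp}}$ is unavoidable. Part (ii) is routine once the kernel identity $T_{\mathcal{F}}^* = T_{\mathcal{F}}^* \pi_{\Vsb\Wsb^\perp}$ is in hand.
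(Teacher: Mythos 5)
Your proof is correct and follows essentially the paper's own route: part (ii) is verbatim the paper's computation, and your part (i) factorization is exactly the paper's application of Proposition~\ref{P cota reconstr consist dual G} to the canonical oblique dual frame, whose synthesis operator $\pi_{\Vsb\Wsb^{\perp}}S_{\mathcal{F}}^{\dagger}T_{\mathcal{F}}$ coincides with your $\pi_{\Vsb\Wsb^{\perp}}(T_{\mathcal{F}}^{*})^{\dagger}$ and has norm at most $\norm{\pi_{\Vsb\Wsb^{\perp}}}\norm{T_{\mathcal{F}}^{\dagger}}$. Only repair the notation in (i): the orthogonal projection onto $\Wsb$ is $(T_{\mathcal{F}}^{*})^{\dagger}T_{\mathcal{F}}^{*}$ (your $T_{\mathcal{F}}^{\dagger}T_{\mathcal{F}}^{*}$ does not compose, since both factors map $\Hil$ into $\ell^{2}$), and your displayed identity must include the outer $\pi_{\Vsb\Wsb^{\perp}}$ that your prose correctly describes, because $(T_{\mathcal{F}}^{*})^{\dagger}T_{\mathcal{F}}^{*}\bigl(f_{r}-\pi_{\Vsb\Wsb^{\perp}}f\bigr)=P_{\Wsb}\bigl(f_{r}-\pi_{\Vsb\Wsb^{\perp}}f\bigr)$ lies in $\Wsb$ and recovers $f_{r}-\pi_{\Vsb\Wsb^{\perp}}f$ only after applying $(\pi_{\Vsb\Wsb^{\perp}})_{\mid\Wsb}=((P_{\Wsb})_{\mid\Vsb})^{-1}$ (Lemma~\ref{lemma 001}).
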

\begin{proof}
If $\mathcal{ G }$ is the canonical oblique dual frame of $\mathcal{
F }$ then $T_{\mathcal{ G}}=\pi_{\Vsb\Wsb^{\perp}} S_{ \mathcal{F}
}^{ \dagger }T_{ \mathcal{ F}}$. We also have, $T_{\mathcal{F}}^{
\dagger }=T_{\mathcal{F}}^{*}S_{ \mathcal{F} }^{ \dagger }$. So (i)
follows from Proposition~\ref{P cota reconstr consist dual G}.

To prove (ii), assume that $\norm{f_{r} - \pi_{\Vsb\Wsb^{\perp}}f }
\leq \frac{ \e }{ \norm{T_{ \mathcal{ F}}}} \norm{f }$. Then,
\begin{equation*}
|| T_{ \mathcal{ F}}^{*} f_{r} - T_{ \mathcal{ F}}^{*} f || = || T_{
\mathcal{ F}}^{*} f_{r} - T_{ \mathcal{ F}}^{*}
\pi_{\Vsb\Wsb^{\perp}}f || \leq \e ||f||.\qedhere
\end{equation*}
\end{proof}

The previous theorem allows to link  $\e $-consistent reconstruction
with approximate oblique duality via the next corollary.

\begin{cor}
Let $\Hil=\mathcal{W} \oplus \mathcal{V}^{\perp}$. Let $\mathcal{ F
}$ be a frame for $\Wsb$, $\mathcal{ G }$ a frame for $\Vsb$ and
$\e\geq0 $. Then
\begin{enumerate}
\item[(i)]
If $f_{r}  = T_{\mathcal{ G}} T_{ \mathcal{ F}}^{*} f $ is an
$\frac{ \e }{ || \pi_{\Vsb\Wsb^{\perp}} ||\, ||T_{ \mathcal{ F}}^{
\dagger} || }$-consistent reconstruction of $f$
 in $\Vsb$, for each $f\in\mathcal{H}$, then $\mathcal{ F
}$ and $\mathcal{ G }$ are $\e $-approximate oblique dual frames.

\item[(ii)]
If  $\mathcal{ F }$ and $\mathcal{ G }$ are $\e $-approximate
oblique dual frames, then $ f_{r}  = T_{\mathcal{ G}} T_{ \mathcal{
F}}^{*}f$ is an $\e || T_{ \mathcal{ F}} || $-consistent
reconstruction of $f$ in $\Vsb$, for each $f\in\mathcal{H}$.
\qedhere
\end{enumerate}
\end{cor}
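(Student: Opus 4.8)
The plan is to use Theorem~\ref{T consistent reconstruction} as the bridge between $\e$-consistency and approximate oblique duality, exploiting the fact that the particular reconstruction $f_{r} = T_{\mathcal{G}} T_{\mathcal{F}}^{*} f$ turns the operator-norm condition of Definition~\ref{D marcos duales oblicuos aproximados} into a pointwise estimate. Since $\mathcal{R}( T_{\mathcal{G}} ) = \Vsb$ we have $f_{r} \in \Vsb$, and the inequality $\norm{ \pi_{ \Vsb \Wsb^{ \perp } } - T_{\mathcal{G}} T_{\mathcal{F}}^{*} } \leq \e$ is precisely the assertion that $\norm{ f_{r} - \pi_{ \Vsb \Wsb^{ \perp } } f } \leq \e \norm{ f }$ holds for every $f \in \Hil$. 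Both parts of the corollary amount to reading this identification in the two directions offered by the two halves of Theorem~\ref{T consistent reconstruction}.

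For (i), I would apply part (i) of Theorem~\ref{T consistent reconstruction} with consistency level $\e_{0} = \frac{ \e }{ \norm{ \pi_{ \Vsb \Wsb^{ \perp } } } \norm{ T_{\mathcal{F}}^{\dagger} } }$. The resulting bound $\norm{ f_{r} - \pi_{ \Vsb \Wsb^{ \perp } } f } \leq \e_{0} \norm{ \pi_{ \Vsb \Wsb^{ \perp } } } \norm{ T_{\mathcal{F}}^{\dagger} } \norm{ f }$ has the two norm factors cancel against the denominator, leaving $\norm{ f_{r} - \pi_{ \Vsb \Wsb^{ \perp } } f } \leq \e \norm{ f }$. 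Substituting $f_{r} = T_{\mathcal{G}} T_{\mathcal{F}}^{*} f$ and passing to the operator norm gives $\norm{ \pi_{ \Vsb \Wsb^{ \perp } } - T_{\mathcal{G}} T_{\mathcal{F}}^{*} } \leq \e$, which is the second alternative in Definition~\ref{D marcos duales oblicuos aproximados}; hence $\mathcal{F}$ and $\mathcal{G}$ are $\e$-approximate oblique dual frames.

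For (ii), I would first note that the two alternatives in the definition coincide: since $( \pi_{ \Wsb \Vsb^{ \perp } } )^{ * } = \pi_{ \Vsb \Wsb^{ \perp } }$ and $( T_{\mathcal{F}} T_{\mathcal{G}}^{*} )^{*} = T_{\mathcal{G}} T_{\mathcal{F}}^{*}$, the operators $\pi_{ \Wsb \Vsb^{ \perp } } - T_{\mathcal{F}} T_{\mathcal{G}}^{*}$ and $\pi_{ \Vsb \Wsb^{ \perp } } - T_{\mathcal{G}} T_{\mathcal{F}}^{*}$ are mutual adjoints and so share the same norm. Thus the hypothesis yields $\norm{ \pi_{ \Vsb \Wsb^{ \perp } } - T_{\mathcal{G}} T_{\mathcal{F}}^{*} } \leq \e$, whence $\norm{ f_{r} - \pi_{ \Vsb \Wsb^{ \perp } } f } \leq \e \norm{ f } = \frac{ \e \norm{ T_{\mathcal{F}} } }{ \norm{ T_{\mathcal{F}} } } \norm{ f }$ for all $f$. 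Applying part (ii) of Theorem~\ref{T consistent reconstruction} with $\e_{0} = \e \norm{ T_{\mathcal{F}} }$ then shows that $f_{r}$ is an $\e \norm{ T_{\mathcal{F}} }$-consistent reconstruction of $f$ in $\Vsb$.

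The only point requiring a little care is the step in (ii) that reads the ``or'' of the definition as the specific inequality on $\pi_{ \Vsb \Wsb^{ \perp } } - T_{\mathcal{G}} T_{\mathcal{F}}^{*}$; this is justified by the adjoint identity above, which guarantees the two operator-norm conditions are in fact equivalent rather than merely one sufficient for the other. Otherwise the argument is purely a bookkeeping of constants through the two halves of Theorem~\ref{T consistent reconstruction}, and I anticipate no genuine obstacle.
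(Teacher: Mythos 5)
Your proposal is correct and takes essentially the same approach as the paper: the paper states this corollary without a written proof, presenting it as an immediate consequence of Theorem~\ref{T consistent reconstruction}, and your argument is exactly that derivation with the constants tracked through both halves of the theorem. Your additional observation that the two conditions in Definition~\ref{D marcos duales oblicuos aproximados} coincide via the adjoint identity $\left( \pi_{ \Wsb \Vsb^{ \perp } } - T_{ \mathcal{F} } T_{ \mathcal{G} }^{ * } \right)^{*} = \pi_{ \Vsb \Wsb^{ \perp } } - T_{ \mathcal{G} } T_{ \mathcal{F} }^{ * }$ is a correct and worthwhile piece of bookkeeping that the paper leaves implicit.
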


\section{Characterization of approximate oblique dual frames}

The following lemma gives equivalent conditions for approximate
oblique duality.

\begin{lem}\label{l001}
Let $\Hil=\mathcal{W} \oplus \mathcal{V}^{\perp}$. Let $\mathcal{F}
= \{f_k\}_{k=1}^{\infty}$, $\mathcal{G} = \{g_k \}_{k=1}^{\infty}$
be Bessel sequences in $\Hil $ such that $\overline{ {\rm span} }
\{f_k\}_{k=1}^{\infty} =\Wsb$ and $\overline{ {\rm span} }
\{g_k\}_{k=1}^{\infty} =\Vsb $. Let $\e \geq0 $. Then the following
assertions are equivalent:
\begin{itemize}
\item[(i)] $\{f_k
\}_{k=1}^{\infty}$ and $\{g_k\}_{k=1}^{\infty}$ are $\e$-approximate
oblique dual frames.
\item[(ii)]
$\norm{\pi_{\Wsb\Vsb^{\perp}}f - \sum_{ k=1 }^{\infty} \langle f,
g_k \rangle f_k }\leq \e\norm{f }$ for all $ f\in\Hil $.
\item[(iii)]
$\norm{\pi_{\Vsb\Wsb^{\perp}}f - \sum_{ k=1 }^{\infty} \langle f,
f_k \rangle g_k } \leq \e\norm{f }$ for all $ f\in\Hil $.
\item[(iv)]
$ | \langle\pi_{\Wsb\Vsb^{\perp}}f, g \rangle - \sum_{ k=1
}^{\infty} \langle f, g_k \rangle \langle f_k, g \rangle | \leq \e
\norm{f }\norm{g }$ for all $ f, g\in\Hil $.
\item[(v)]
$ | \langle \pi_{\Vsb\Wsb^{\perp}}f, g \rangle - \sum_{ k=1
}^{\infty} \langle f, f_k \rangle \langle g_k, g \rangle |
\leq\e\norm{f }\norm{g }$ for all $ f, g\in\Hil $.
\end{itemize}\qedhere
\end{lem}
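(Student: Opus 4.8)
The plan is to reduce everything to two operator identities together with the elementary fact that a bounded operator and its adjoint have equal norm. First I would record that, since $\mathcal{F}$ and $\mathcal{G}$ are Bessel, the synthesis operators $T_{\mathcal{F}}, T_{\mathcal{G}}$ and the analysis operators $T_{\mathcal{F}}^{*}, T_{\mathcal{G}}^{*}$ are bounded, and that for every $f \in \Hil$ one has $T_{\mathcal{F}} T_{\mathcal{G}}^{*} f = \sum_{k=1}^{\infty}\langle f, g_k\rangle f_k$ and $T_{\mathcal{G}} T_{\mathcal{F}}^{*} f = \sum_{k=1}^{\infty}\langle f, f_k\rangle g_k$, with the series converging unconditionally (the coefficient sequences lie in $\ell^{2}$ by the Bessel property). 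With these identities in hand, conditions (ii) and (iii) read literally $\norm{\pi_{\Wsb\Vsb^{\perp}} f - T_{\mathcal{F}} T_{\mathcal{G}}^{*} f} \leq \e\norm{f}$ and $\norm{\pi_{\Vsb\Wsb^{\perp}} f - T_{\mathcal{G}} T_{\mathcal{F}}^{*} f} \leq \e\norm{f}$ for all $f$.

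Next I would pass from these pointwise bounds to operator-norm bounds. Since (ii) is required for all $f \in \Hil$, it is equivalent to $\norm{\pi_{\Wsb\Vsb^{\perp}} - T_{\mathcal{F}} T_{\mathcal{G}}^{*}} \leq \e$, and likewise (iii) is equivalent to $\norm{\pi_{\Vsb\Wsb^{\perp}} - T_{\mathcal{G}} T_{\mathcal{F}}^{*}} \leq \e$; these are exactly the two alternatives appearing in Definition~\ref{D marcos duales oblicuos aproximados}, hence each is (i). The key bookkeeping observation is that these two conditions are not genuinely "one or the other": using $(\pi_{\Wsb\Vsb^{\perp}})^{*} = \pi_{\Vsb\Wsb^{\perp}}$ from the preliminaries together with $(T_{\mathcal{F}} T_{\mathcal{G}}^{*})^{*} = T_{\mathcal{G}} T_{\mathcal{F}}^{*}$, the operator $\pi_{\Vsb\Wsb^{\perp}} - T_{\mathcal{G}} T_{\mathcal{F}}^{*}$ is precisely the adjoint of $\pi_{\Wsb\Vsb^{\perp}} - T_{\mathcal{F}} T_{\mathcal{G}}^{*}$. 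Because $\norm{A} = \norm{A^{*}}$, the two operator-norm conditions coincide, the "or" in the definition is symmetric, and (i), (ii), (iii) are all equivalent.

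Finally I would treat (ii)$\Leftrightarrow$(iv) and (iii)$\Leftrightarrow$(v) by the standard bilinear characterization of the operator norm: for a bounded operator $A$ one has $\norm{A} \leq \e$ if and only if $|\langle Af, g\rangle| \leq \e\norm{f}\norm{g}$ for all $f, g \in \Hil$. Applying this with $A = \pi_{\Wsb\Vsb^{\perp}} - T_{\mathcal{F}} T_{\mathcal{G}}^{*}$ and expanding $\langle T_{\mathcal{F}} T_{\mathcal{G}}^{*} f, g\rangle = \sum_{k}\langle f, g_k\rangle\langle f_k, g\rangle$ yields (iv), while the analogous choice $A = \pi_{\Vsb\Wsb^{\perp}} - T_{\mathcal{G}} T_{\mathcal{F}}^{*}$ yields (v). To justify moving the inner product inside the sum I would invoke Cauchy--Schwarz in $\ell^{2}$ together with the two Bessel bounds, which also guarantees absolute convergence of the series in (iv) and (v).

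I do not expect a genuine obstacle; the substance is recognizing the two operator identities and the adjoint relation that collapses the "or". The one point deserving care is that statement (i) speaks of \emph{frames}, whereas the hypothesis supplies only Bessel sequences with the prescribed closed spans: the equivalences (ii)--(v) are purely operator-theoretic and need only the Bessel assumption, while the frame property implicit in (i) is furnished, when $\e < 1$, by Remark~\ref{Obs definicion de mdoa}(iii), and otherwise belongs to the standing reading of the definition.
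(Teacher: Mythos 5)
Your proposal is correct and follows essentially the same route as the paper: both reduce (ii) and (iii) to the operator-norm conditions of Definition~\ref{D marcos duales oblicuos aproximados}, whose ``or'' collapses because $\pi_{\Vsb\Wsb^{\perp}}-T_{\mathcal{G}}T_{\mathcal{F}}^{*}$ is the adjoint of $\pi_{\Wsb\Vsb^{\perp}}-T_{\mathcal{F}}T_{\mathcal{G}}^{*}$ and $\norm{A}=\norm{A^{*}}$, and both derive (iv) and (v) from the sesquilinear characterization of the operator norm (Cauchy--Schwarz in one direction, the supremum description of the norm in the other), which is exactly the paper's explicit two-step argument for (ii)$\Leftrightarrow$(iv). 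Your version is if anything slightly more careful than the paper's, which leaves the adjoint step and the frames-versus-Bessel issue in (i) implicit.
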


\begin{proof}
Recall that (ii) and (iii) are the equivalent conditions (\ref{E D
1}) and (\ref{E D 2}) of Definition~\ref{D marcos duales oblicuos
aproximados}.

Let $f,g\in\Hil$. Assume that (ii) holds. Using that

\centerline{$| \langle\pi_{\Wsb\Vsb^{\perp}}f, g \rangle - \sum_{
k=1 }^{\infty} \langle f, g_k \rangle \langle f_k, g \rangle | = |
\langle\pi_{\Wsb\Vsb^{\perp}}f - \sum_{ k=1 }^{\infty} \langle f,
g_k \rangle f_k, g \rangle |$,}

\noindent and Cauchy-Schwarz inequality, we obtain (iv).

Assume now that (iv) holds. We have
\begin{align*}
\norm{\pi_{\Wsb\Vsb^{\perp}} g - \sum_{ k=1 }^{\infty} \langle g,
g_k \rangle f_k } &= \sup_{ \|f\| = 1 } | \langle
\pi_{\Wsb\Vsb^{\perp}} g - \sum_{ k=1 }^{\infty} \langle g, g_k
\rangle f_k, f \rangle | \leq \e \norm{g }.
\end{align*}
So (ii) holds.

The proof of (iii) $\Leftrightarrow$ (v) is similar to (ii)
$\Leftrightarrow$ (iv).
\end{proof}

Condition (ii) of Lemma~\ref{l001} implies
\begin{equation}\label{e006} \norm{f - \sum_{ k=1 }^{\infty}
\langle f, g_k \rangle f_k }\leq \e\norm{f }\,\,\,\text{ for }
\,f\in\Wsb,
\end{equation}
whereas (iii) implies $\norm{g - \sum_{ k=1 }^{\infty} \langle g,
f_k \rangle g_k }\leq \e\norm{g }$ for $g\in\Vsb.$ We conclude that
the elements of each subspace can be reconstructed approximately,
with a uniform relative error.

Let $\{f_k\}_{k=1}^{\infty}$ be a Bessel sequence in $\Wsb$ and $\{
g_k\}_{k=1}^{\infty}$ be a Bessel sequence in $\Vsb$. Observe that
for $ f\in\Hil $,

\centerline{$\norm{\pi_{\Wsb\Vsb^{\perp}}f - \sum_{ k=1 }^{\infty}
\langle f, g_k \rangle f_k } = \norm{\pi_{\Wsb\Vsb^{\perp}}f -
\sum_{ k=1 }^{\infty} \langle \pi_{\Wsb\Vsb^{\perp}}f, g_k \rangle
f_k }$.}

\noindent So, we have the following result:

\begin{prop}\label{P aprox subesp entonces aodf}
Let $\Hil=\mathcal{W} \oplus \mathcal{V}^{\perp}$. Let
$\{f_k\}_{k=1}^{\infty}$ be a frame for $\Wsb$ and $\{g_k
\}_{k=1}^{\infty}$ be a frame for $\Vsb$ such that (\ref{e006}) is
satisfied with $\e \geq0 $. Then $\{f_k\}_{k=1}^{\infty}$ and $\{g_k
\}_{k=1}^{\infty}$ are $\e \norm{\pi_{\Wsb\Vsb^{\perp}}
}$-approximate oblique dual frames.
\end{prop}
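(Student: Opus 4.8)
The plan is to reduce the claim to condition (ii) of Lemma~\ref{l001}, that is, to prove the estimate $\norm{ \pi_{ \Wsb \Vsb^{ \perp } } f - \sum_{ k=1 }^{ \infty } \langle f, g_k \rangle f_k } \leq \e \norm{ \pi_{ \Wsb \Vsb^{ \perp } } } \norm{ f }$ for every $ f \in \Hil $, and then invoke that lemma to conclude. The mechanism for producing this estimate is to feed the oblique projection of $ f $ into the hypothesis (\ref{e006}).

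First I would invoke the identity displayed immediately before the statement, namely $\norm{ \pi_{ \Wsb \Vsb^{ \perp } } f - \sum_{ k=1 }^{ \infty } \langle f, g_k \rangle f_k } = \norm{ \pi_{ \Wsb \Vsb^{ \perp } } f - \sum_{ k=1 }^{ \infty } \langle \pi_{ \Wsb \Vsb^{ \perp } } f, g_k \rangle f_k }$. This rests on the fact that $ g_k \in \Vsb $, so that $ \pi_{ \Vsb \Wsb^{ \perp } } g_k = g_k $, together with the adjoint relation $ ( \pi_{ \Wsb \Vsb^{ \perp } } )^{ \ast } = \pi_{ \Vsb \Wsb^{ \perp } } $ recorded in Section~2. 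The effect of this identity is to move the projection off the coefficients and onto the vector $ f $ itself.

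The key step is then to set $ h := \pi_{ \Wsb \Vsb^{ \perp } } f $ and to observe that $ h \in \Wsb $, since the range of $ \pi_{ \Wsb \Vsb^{ \perp } } $ is $ \Wsb $. Because $ h $ belongs to $ \Wsb $, the subspace hypothesis (\ref{e006}) applies to it and yields $\norm{ h - \sum_{ k=1 }^{ \infty } \langle h, g_k \rangle f_k } \leq \e \norm{ h }$. Substituting $ h = \pi_{ \Wsb \Vsb^{ \perp } } f $ back into the right-hand side of the identity above and bounding $\norm{ h } = \norm{ \pi_{ \Wsb \Vsb^{ \perp } } f } \leq \norm{ \pi_{ \Wsb \Vsb^{ \perp } } } \norm{ f }$ gives exactly the desired estimate. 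A final appeal to Lemma~\ref{l001} (the equivalence of (i) and (ii)) translates this into the statement that $ \{ f_k \}_{k=1}^{\infty} $ and $ \{ g_k \}_{k=1}^{\infty} $ are $ \e \norm{ \pi_{ \Wsb \Vsb^{ \perp } } } $-approximate oblique dual frames.

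Since the crucial identity is already supplied in the text preceding the statement, there is no serious obstacle here; the argument is essentially a three-line chain of (in)equalities. The only genuine point requiring attention is the recognition that $ \pi_{ \Wsb \Vsb^{ \perp } } f $ lands in $ \Wsb $, which is precisely what licenses the use of the subspace-only hypothesis (\ref{e006}). It is worth noting that the tolerance genuinely degrades from $ \e $ to $ \e \norm{ \pi_{ \Wsb \Vsb^{ \perp } } } $, and since $\norm{ \pi_{ \Wsb \Vsb^{ \perp } } } = \tfrac{1}{\cos( \Wsb, \Vsb )} \geq 1$ (as recorded in Section~2), the oblique-angle geometry between $ \Wsb $ and $ \Vsb $ controls how much worse the approximation on all of $ \Hil $ is compared with the approximation on $ \Wsb $ alone.
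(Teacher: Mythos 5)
Your proof is correct and follows essentially the same route as the paper: the paper's (implicit) proof is precisely the displayed identity preceding the statement, which moves the projection onto $f$ via $g_k=\pi_{\Vsb\Wsb^{\perp}}g_k$ and the adjoint relation $(\pi_{\Vsb\Wsb^{\perp}})^{\ast}=\pi_{\Wsb\Vsb^{\perp}}$, after which one applies (\ref{e006}) to $\pi_{\Wsb\Vsb^{\perp}}f\in\Wsb$ and bounds $\norm{\pi_{\Wsb\Vsb^{\perp}}f}\leq\norm{\pi_{\Wsb\Vsb^{\perp}}}\norm{f}$, exactly as you do.
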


Similarly we have:

\begin{prop}
Let $\Hil=\mathcal{W} \oplus \Vsb^{\perp}$. Let
$\{f_k\}_{k=1}^{\infty}$ be a Bessel sequence in $\Wsb$ and $\{
g_k\}_{k=1}^{\infty}$ be a Bessel sequence in $\Hil $ such that
(\ref{e006}) holds with $\e \geq0 $. If
$\e\norm{\pi_{\Wsb\Vsb^{\perp}}} < 1$, then $\{f_k\}_{k=1}^{\infty}$
is a frame for $\Wsb$, $\{\pi_{\Vsb\Wsb^{\perp}}
g_k\}_{k=1}^{\infty}$ is a frame for $\mathcal{V}$ and
$\{f_k\}_{k=1}^{\infty}$ and $\{\pi_{\Vsb\Wsb^{\perp}}
g_k\}_{k=1}^{\infty}$ are $\e \norm{\pi_{\Wsb\Vsb^{\perp}}
}$-approximate oblique dual frames.
\end{prop}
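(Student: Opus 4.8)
The plan is to reduce the whole statement to the hypothesis (\ref{e006}) by a single adjoint manipulation, and then to upgrade the two Bessel sequences involved to frames by means of Remark~\ref{Obs definicion de mdoa}(iii). First I would set up the geometry. Since $\Hil = \Wsb \oplus \Usb^{\perp}$, Theorem~\ref{descompdH} yields simultaneously $\Hil = \Usb \oplus \Wsb^{\perp}$, so that both oblique projections $\pi_{\Wsb \Usb^{\perp}}$ and $\pi_{\Usb \Wsb^{\perp}}$ are defined and bounded and, as recorded in the Preliminaries, mutually adjoint: $(\pi_{\Usb \Wsb^{\perp}})^{*} = \pi_{\Wsb \Usb^{\perp}}$. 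Writing $\mathcal{G} = \{\pi_{\Usb \Wsb^{\perp}} g_k\}_{k=1}^{\infty}$, each term lies in $\Usb$, and because $\pi_{\Usb \Wsb^{\perp}}$ is bounded and $\{g_k\}_{k=1}^{\infty}$ is Bessel in $\Hil$, the sequence $\mathcal{G}$ is Bessel in $\Usb$ (with bound at most $\norm{\pi_{\Usb \Wsb^{\perp}}}^{2}$ times the bound of $\{g_k\}_{k=1}^{\infty}$). Together with the hypothesis that $\mathcal{F} = \{f_k\}_{k=1}^{\infty}$ is Bessel, this supplies all the Bessel information I will need.

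The core of the argument is the identity, valid for every $f \in \Hil$,
\[
\pi_{\Wsb \Usb^{\perp}} f - \sum_{k=1}^{\infty} \langle f, \pi_{\Usb \Wsb^{\perp}} g_k \rangle f_k = \pi_{\Wsb \Usb^{\perp}} f - \sum_{k=1}^{\infty} \langle \pi_{\Wsb \Usb^{\perp}} f, g_k \rangle f_k,
\]
obtained by moving $\pi_{\Usb \Wsb^{\perp}}$ across each inner product via $(\pi_{\Usb \Wsb^{\perp}})^{*} = \pi_{\Wsb \Usb^{\perp}}$. Since $\pi_{\Wsb \Usb^{\perp}} f \in \Wsb$, I would apply (\ref{e006}) to the vector $\pi_{\Wsb \Usb^{\perp}} f$ and bound the right-hand side by $\e \norm{\pi_{\Wsb \Usb^{\perp}} f} \leq \e \norm{\pi_{\Wsb \Usb^{\perp}}} \norm{f}$. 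This is precisely condition (ii) of Lemma~\ref{l001} (equivalently (\ref{E D 1})) for the pair $\mathcal{F}$, $\mathcal{G}$ relative to the subspaces $\Wsb$, $\Usb$, with constant $\e \norm{\pi_{\Wsb \Usb^{\perp}}}$.

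Finally I would invoke Remark~\ref{Obs definicion de mdoa}(iii), applied with $\Vsb$ replaced by $\Usb$ and with parameter $\e \norm{\pi_{\Wsb \Usb^{\perp}}}$: since this parameter is $< 1$ by hypothesis and both $\mathcal{F}$ and $\mathcal{G}$ are Bessel, the norm condition by itself forces $\mathcal{R}(T_{\mathcal{F}}) = \Wsb$ and $\mathcal{R}(T_{\mathcal{G}}) = \Usb$, so that $\mathcal{F}$ is a frame for $\Wsb$, $\mathcal{G}$ is a frame for $\Usb$, and all three assertions hold at once with the claimed approximation constant. I expect the only delicate point to be this last promotion step: one must check that the remark, phrased for a generic approximate oblique dual pair, applies verbatim here with the subspace $\Usb$ and the smaller constant, which is legitimate exactly because that constant is strictly below $1$. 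Apart from this, the proof is a direct adaptation of the computation preceding Proposition~\ref{P aprox subesp entonces aodf}.
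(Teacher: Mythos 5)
Your proof is correct and follows exactly the route the paper intends: the paper states this proposition without an explicit proof (prefacing it only with ``Similarly we have''), the implicit argument being precisely your adjoint manipulation $\langle f, \pi_{\Usb\Wsb^{\perp}} g_k\rangle = \langle \pi_{\Wsb\Usb^{\perp}} f, g_k\rangle$ applied to (\ref{e006}), followed by promotion of the two Bessel sequences to frames via Remark~\ref{Obs definicion de mdoa}(iii). The one point you flag as delicate (invoking the remark with $\Usb$ in place of $\Vsb$ and constant $\e\norm{\pi_{\Wsb\Usb^{\perp}}}<1$) is handled correctly, since $\{f_k\}_{k=1}^{\infty}\subset\Wsb$ and $\{\pi_{\Usb\Wsb^{\perp}}g_k\}_{k=1}^{\infty}\subset\Usb$, which is all the remark's Neumann-series argument needs.
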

In \cite[Lemma B.1]{Eldar-Christensen (2006)} a characterization of
oblique dual frames is given by oblique inverses. We have a similar
characterization for approximate oblique dual frames using
\emph{approximate oblique left inverses}.
\begin{defn}\label{D inversa oblicua aprox}
Let $\Hil=\mathcal{W} \oplus \mathcal{V}^{\perp}$. Let $\mathcal{F}
= \{f_k\}_{k=1}^{\infty}$ be a frame for $\Wsb$ and $\e \geq0 $. We
say that a linear bounded operator $ A : \lcuad \rightarrow\Vsb $ is
an \emph{$\e$-approximate oblique left inverse of $
T_{\mathcal{F}}^{*}$ in $\Vsb$ along $\Wsb^{\perp}$} if
$\norm{\pi_{\Vsb\Wsb^{\perp}} - A T_{\mathcal{F}}^{*} } \leq \e$.
\end{defn}

We now give the following characterization.
\begin{lem}\label{l002}
Let $\Hil=\mathcal{W} \oplus \mathcal{V}^{\perp}$. Let $\mathcal{F}
= \{f_k\}_{k=1}^{\infty}$ be a frame for $\Wsb$. Let
$\{\dl_k\}_{k=1}^{\infty}$ be the canonical basis of $\lcuad$ and
$0\leq\e<1$. The $\e$-approximate oblique dual frames of
$\mathcal{F}$ in $\Vsb$ are the sequences $\{g_k\}_{k=1}^{\infty} =
\{A\dl_k\}_{k=1}^{\infty}$, where $A : \lcuad \longrightarrow\Vsb$
is an $\e$-approximate oblique left inverse of $
T_{\mathcal{F}}^{*}$ in $\Vsb$ along $\Wsb^{\perp}$. \qedhere
\end{lem}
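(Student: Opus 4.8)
The plan is to prove the two inclusions the statement asserts, the key bookkeeping fact being that if one sets $ g_k = A \dl_k $, then the synthesis operator $ T_{\mathcal{G}} $ coincides with $ A $. Indeed, $ T_{\mathcal{G}} $ and $ A $ are both bounded and linear on $ \lcuad $ and agree on the orthonormal basis $ \{ \dl_k \}_{k=1}^{\infty} $, so they are equal; conversely $ A \dl_k = T_{\mathcal{G}} \dl_k = g_k $. I would record at the outset that $ \{ A \dl_k \}_{k=1}^{\infty} $ is automatically a Bessel sequence with bound $ \norm{A}^2 $, since $ \sum_k |\langle f, A\dl_k\rangle|^2 = \sum_k |\langle A^{*} f, \dl_k\rangle|^2 = \norm{A^{*} f}^2 \le \norm{A}^2 \norm{f}^2 $ by Parseval's identity for $ \{\dl_k\}_{k=1}^{\infty} $.

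For the direction that starts from an $\e$-approximate oblique dual frame $ \mathcal{G} = \{g_k\}_{k=1}^{\infty} $ of $ \mathcal{F} $ in $ \Vsb $, I would simply set $ A := T_{\mathcal{G}} $, which maps $ \lcuad $ into $ \Vsb $ since $ \mathcal{G} $ is a frame for $ \Vsb $. By Lemma~\ref{l001} (equivalence of (i) and (iii)) the approximate oblique duality gives $ \norm{ \pi_{\Vsb \Wsb^{\perp}} - T_{\mathcal{G}} T_{\mathcal{F}}^{*} } \le \e $, that is $ \norm{ \pi_{\Vsb \Wsb^{\perp}} - A T_{\mathcal{F}}^{*} } \le \e $, so $ A $ is an $\e$-approximate oblique left inverse of $ T_{\mathcal{F}}^{*} $ in $ \Vsb $ along $ \Wsb^{\perp} $ in the sense of Definition~\ref{D inversa oblicua aprox}, and $ A \dl_k = g_k $. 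This direction is routine and uses only $ \e \ge 0 $.

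For the converse, given an $\e$-approximate oblique left inverse $ A $ I would define $ g_k := A\dl_k $, so that $ \{g_k\}_{k=1}^{\infty} \subset \Vsb $ is Bessel and $ T_{\mathcal{G}} = A $ by the identification above, whence $ \norm{ \pi_{\Vsb \Wsb^{\perp}} - T_{\mathcal{G}} T_{\mathcal{F}}^{*} } \le \e < 1 $. The substantive step is to upgrade $ \{g_k\}_{k=1}^{\infty} $ from a Bessel sequence to a frame for all of $ \Vsb $. For this I would restrict to $ \Vsb $ and use that $ (\pi_{\Vsb \Wsb^{\perp}})_{|\Vsb} = I_{\Vsb} $, so that $ \norm{ I_{\Vsb} - (T_{\mathcal{G}} T_{\mathcal{F}}^{*})_{|\Vsb} } \le \e < 1 $; Neumann's theorem then makes $ (T_{\mathcal{G}} T_{\mathcal{F}}^{*})_{|\Vsb} \colon \Vsb \to \Vsb $ invertible, hence surjective, so that $ \Vsb = \mathcal{R}((T_{\mathcal{G}} T_{\mathcal{F}}^{*})_{|\Vsb}) \subseteq \mathcal{R}(T_{\mathcal{G}}) \subseteq \Vsb $ forces $ \mathcal{R}(T_{\mathcal{G}}) = \Vsb $. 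Together with the Bessel property this makes $ \{g_k\}_{k=1}^{\infty} $ a frame for $ \Vsb $ by the frame characterization recalled in Section~2 (this is exactly the argument behind Remark~\ref{Obs definicion de mdoa}(ii)--(iii), applied with the roles of $ \Vsb $ and $ \Wsb $ interchanged). Since $ \{f_k\}_{k=1}^{\infty} $ is already a frame for $ \Wsb $ and $ \norm{ \pi_{\Vsb \Wsb^{\perp}} - T_{\mathcal{G}} T_{\mathcal{F}}^{*} } \le \e $, the second condition of Definition~\ref{D marcos duales oblicuos aproximados} holds and $ \mathcal{F} $, $ \mathcal{G} $ are $\e$-approximate oblique dual frames.

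I expect the only real obstacle to be this last upgrade in the converse: verifying that $ \{g_k\}_{k=1}^{\infty} = \{A\dl_k\}_{k=1}^{\infty} $ is a frame for the whole subspace $ \Vsb $ rather than for some proper closed subspace of it. This is precisely where the hypothesis $ \e < 1 $ is indispensable, since it is what powers the Neumann-series inversion that pins down $ \mathcal{R}(T_{\mathcal{G}}) = \Vsb $; for $ \e \ge 1 $ the range of $ A $ could fail to fill $ \Vsb $ and $ \{g_k\}_{k=1}^{\infty} $ need not be a frame for $ \Vsb $. Everything else is a direct translation between the operator $ A $ and the synthesis operator $ T_{\mathcal{G}} $.
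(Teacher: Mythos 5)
Your proof is correct and takes essentially the same route as the paper's: the forward direction is the same identification $A=T_{\mathcal{G}}$, and your converse reproduces inline exactly what the paper obtains by citation, namely Christensen's Theorem 3.2.3 (that $\{A\dl_k\}_{k=1}^{\infty}$ is Bessel with $T_{\mathcal{G}}=A$) and Remark~\ref{Obs definicion de mdoa}(ii)--(iii) (the Neumann-series argument giving $\mathcal{R}(T_{\mathcal{G}})=\Vsb$, hence that $\mathcal{G}$ is a frame for $\Vsb$). The only difference is presentational: you prove these two cited ingredients directly (restricting $\pi_{\Vsb\Wsb^{\perp}}-T_{\mathcal{G}}T_{\mathcal{F}}^{*}$ to $\Vsb$) instead of quoting them.
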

\begin{proof}
Clearly, if $\mathcal{G} = \{g_k\}_{k=1}^{\infty}$ is an
$\e$-approximate oblique dual frame of $\{f_k\}_{k=1}^{\infty}$ in
$\Vsb$, then $T_{\mathcal{G}}$ is an $\e$-approximate oblique left
inverse of $T_{\mathcal{F}}^{*}$ in $\Vsb$ along $\Wsb^{\perp}$ and
$\{g_k\}_{k=1}^{\infty} = \{T_{\mathcal{G}}\dl_k\}_{k=1}^{\infty}$.

Let now $ A : \lcuad \longrightarrow\Vsb $ be an $\e$-approximate
oblique left inverse. Let $\mathcal{G} = \{g_k\}_{k=1}^{\infty} =
\{A \dl_k\}_{k=1}^{\infty}$. By \cite[Theorem 3.2.3]{Christensen
(2016)}, $\mathcal{G}$ is a Bessel sequence in $\Vsb$ with
$T_{\mathcal{G}}=A$. We have $\norm{\pi_{\Vsb\Wsb^{\perp}} -
T_{\mathcal{G}} T_{\mathcal{F}}^{*} } \leq \e. $ By Remark~\ref{Obs
definicion de mdoa}(iii), $\mathcal{G}$ is a frame for $\Vsb$. Thus,
$\mathcal{G}$ is an $\e$-approximate oblique dual frame of $\{f_k
\}_{k=1}^{\infty}$ in $\Vsb$.
\end{proof}

We note that $ A:\lcuad\longrightarrow\Vsb $ is an $\e$-approximate
oblique left inverse of $ T_{ \mathcal{ F}}^{\ast}$ in $\Vsb$ if and
only if $ A = L + R $ where $ L:\lcuad\longrightarrow\Vsb $ is an
oblique left inverse of $ T_{ \mathcal{ F}}^{\ast}$ in $\Vsb$ and $
R:\lcuad\longrightarrow\Vsb $ is a bounded operator such that
$\norm{RT_{ \mathcal{F} }^{\ast}} \leq \e$. So, from
Lemma~\ref{l002} and \cite[Lemma B.2]{Christensen-Eldar (2004)}, we
obtain a description of the approximate oblique dual frames:
\begin{thm}
Let $\Hil=\mathcal{W} \oplus \mathcal{V}^{\perp}$. Let $\mathcal{F}
= \{f_k\}_{k=1}^{\infty}$ be a frame for $\Wsb$ and $0\leq\e<1$.
Then any $\e$-approximate oblique dual frame $\mathcal{G}=
\{g_k\}_{k=1}^{\infty} \subset\Vsb $ of $\mathcal{F}$ in $\Vsb$ is
of the form

\centerline{$\{g_k\}_{k=1}^{\infty} = \{\pi_{\Vsb\Wsb^{\perp}} S_{
\mathcal{F} }^{ \dagger } f_k + h_k - \sum_{j=1}^{\infty} \langle
S_{ \mathcal{F} }^{ \dagger }f_k, f_j \rangle h_j +
r_{k}\}_{k=1}^{\infty}$}

\noindent where $\{h_k\}_{k=1}^{\infty}$ and $\{r_k
\}_{k=1}^{\infty}$ are Bessel sequences in $\Vsb$ and $ || \sum
\langle f,f_{k} \rangle r_{k} || \leq \e ||f|| $ for all $f\in
\mathcal{H}$.
\end{thm}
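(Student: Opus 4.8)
The plan is to reduce the statement, through Lemma~\ref{l002} (equivalently Lemma~\ref{l001}(iii)), to a description of all bounded operators $A:\lcuad\to\Vsb$ with $\norm{\pi_{\Vsb\Wsb^{\perp}}-AT_{\mathcal{F}}^{*}}\leq\e$, since these are exactly the synthesis operators $T_{\mathcal{G}}$ of the $\e$-approximate oblique dual frames $\mathcal{G}$ of $\mathcal{F}$ in $\Vsb$. First I would record the two structural facts that drive the argument. Let $T_{0}:=\pi_{\Vsb\Wsb^{\perp}}S_{\mathcal{F}}^{\dagger}T_{\mathcal{F}}$ be the synthesis operator of the canonical oblique dual (as computed in the proof of Theorem~\ref{T consistent reconstruction}). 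Using $S_{\mathcal{F}}^{\dagger}S_{\mathcal{F}}=P_{\Wsb}$ and $\pi_{\Vsb\Wsb^{\perp}}P_{\Wsb}=\pi_{\Vsb\Wsb^{\perp}}$ one gets the identity $T_{0}T_{\mathcal{F}}^{*}=\pi_{\Vsb\Wsb^{\perp}}$. Second, set $P:=T_{\mathcal{F}}^{*}S_{\mathcal{F}}^{\dagger}T_{\mathcal{F}}$; a short computation using $S_{\mathcal{F}}^{\dagger}S_{\mathcal{F}}S_{\mathcal{F}}^{\dagger}=S_{\mathcal{F}}^{\dagger}$ and the self-adjointness of $S_{\mathcal{F}}^{\dagger}$ shows $P=P^{*}=P^{2}$, so $P$ is the orthogonal projection of $\lcuad$ onto $\overline{\mathcal{R}(T_{\mathcal{F}}^{*})}$; in particular $PT_{\mathcal{F}}^{*}=T_{\mathcal{F}}^{*}$.

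Now, given an $\e$-approximate oblique dual frame $\mathcal{G}=\{g_{k}\}_{k=1}^{\infty}$ in $\Vsb$, I would put $A:=T_{\mathcal{G}}$ and $B:=A-T_{0}$. By the left-inverse bound and the first identity, $\norm{BT_{\mathcal{F}}^{*}}=\norm{\pi_{\Vsb\Wsb^{\perp}}-AT_{\mathcal{F}}^{*}}\leq\e$. I would then define $h_{k}:=B\dl_{k}=g_{k}-\pi_{\Vsb\Wsb^{\perp}}S_{\mathcal{F}}^{\dagger}f_{k}$ and $r_{k}:=BP\dl_{k}$. Since $B$ and $BP$ are bounded operators from $\lcuad$ into $\Vsb$, by \cite[Theorem 3.2.3]{Christensen (2016)} both $\{h_{k}\}_{k=1}^{\infty}$ and $\{r_{k}\}_{k=1}^{\infty}$ are Bessel sequences in $\Vsb$, with synthesis operators $B$ and $BP$ respectively. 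Computing $P\dl_{k}=T_{\mathcal{F}}^{*}S_{\mathcal{F}}^{\dagger}f_{k}=\{\langle S_{\mathcal{F}}^{\dagger}f_{k},f_{j}\rangle\}_{j}$ and applying the bounded operator $B$ term by term yields $r_{k}=\sum_{j=1}^{\infty}\langle S_{\mathcal{F}}^{\dagger}f_{k},f_{j}\rangle h_{j}$, the series converging because $\{\langle S_{\mathcal{F}}^{\dagger}f_{k},f_{j}\rangle\}_{j}\in\lcuad$ and $\{h_{j}\}$ is Bessel. Substituting the defining expression $h_{k}=g_{k}-\pi_{\Vsb\Wsb^{\perp}}S_{\mathcal{F}}^{\dagger}f_{k}$ into the claimed formula makes it collapse to $g_{k}=T_{0}\dl_{k}+h_{k}=g_{k}$, so the representation holds with these $\{h_{k}\}$ and $\{r_{k}\}$.

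It remains to verify the bound on $\{r_{k}\}$. Using $PT_{\mathcal{F}}^{*}=T_{\mathcal{F}}^{*}$ I get $BPT_{\mathcal{F}}^{*}=BT_{\mathcal{F}}^{*}$, whence for every $f\in\Hil$, $\norm{\sum_{k=1}^{\infty}\langle f,f_{k}\rangle r_{k}}=\norm{BPT_{\mathcal{F}}^{*}f}=\norm{BT_{\mathcal{F}}^{*}f}\leq\e\norm{f}$, which is exactly the required estimate. The hard part is not any single estimate but the operator bookkeeping: isolating the canonical term $T_{0}$, recognizing $P$ as the orthogonal projection onto $\overline{\mathcal{R}(T_{\mathcal{F}}^{*})}$, and seeing that the splitting $B=B(I-P)+BP$ of the correction operator produces precisely the three-term form (with $B(I-P)\dl_{k}=h_{k}-\sum_{j}\langle S_{\mathcal{F}}^{\dagger}f_{k},f_{j}\rangle h_{j}$ and $BP\dl_{k}=r_{k}$); justifying the term-by-term application of $B$ to $P\dl_{k}$ is where boundedness of $B$ and the Bessel property of $\{h_{j}\}$ are used. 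Finally I would note the converse: reversing these computations shows that for any Bessel sequences $\{h_{k}\},\{r_{k}\}\subset\Vsb$ satisfying $\norm{\sum_{k}\langle f,f_{k}\rangle r_{k}}\leq\e\norm{f}$ the displayed sequence is an $\e$-approximate oblique dual frame, so the description is exhaustive.
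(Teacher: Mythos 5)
Your proof is correct, and it differs from the paper's own argument in the decisive step, in a way worth recording. The paper proves the theorem in two moves: first it observes that $A$ is an $\e$-approximate oblique left inverse of $T_{\mathcal{F}}^{*}$ if and only if $A=L+R$ with $L$ an exact oblique left inverse and $\norm{RT_{\mathcal{F}}^{*}}\leq\e$, and then it quotes the parametrization of \emph{all} exact oblique left inverses from \cite[Lemma B.2]{Christensen-Eldar (2004)}, namely $L=\pi_{\Vsb\Wsb^{\perp}}S_{\mathcal{F}}^{\dagger}T_{\mathcal{F}}+W(I-T_{\mathcal{F}}^{*}S_{\mathcal{F}}^{\dagger}T_{\mathcal{F}})$ with $W:\lcuad\rightarrow\Vsb$ bounded and arbitrary, setting $h_{k}=W\dl_{k}$, $r_{k}=R\dl_{k}$. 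You bypass that external lemma entirely: with $T_{0}=\pi_{\Vsb\Wsb^{\perp}}S_{\mathcal{F}}^{\dagger}T_{\mathcal{F}}$, $P=T_{\mathcal{F}}^{*}S_{\mathcal{F}}^{\dagger}T_{\mathcal{F}}$ and $B=T_{\mathcal{G}}-T_{0}$, your choice amounts to taking $W=B$ and $R=BP$ in the paper's scheme, and the whole argument reduces to the two identities $T_{0}T_{\mathcal{F}}^{*}=\pi_{\Vsb\Wsb^{\perp}}$ and $PT_{\mathcal{F}}^{*}=T_{\mathcal{F}}^{*}$, which you verify directly; the displayed formula then holds because your $r_{k}$ equals $\sum_{j}\langle S_{\mathcal{F}}^{\dagger}f_{k},f_{j}\rangle h_{j}$ exactly, so the last two terms cancel. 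Since the theorem only asserts the \emph{existence} of such $\{h_{k}\}$ and $\{r_{k}\}$, this is a complete proof, and it is self-contained (no appeal to Christensen--Eldar). It also exposes something the paper's formulation obscures: the three-term form is redundant, because every $\e$-approximate oblique dual frame can already be written as $g_{k}=\pi_{\Vsb\Wsb^{\perp}}S_{\mathcal{F}}^{\dagger}f_{k}+h_{k}$ with $\{h_{k}\}$ Bessel in $\Vsb$ and $\norm{\sum_{k}\langle f,f_{k}\rangle h_{k}}\leq\e\norm{f}$ for all $f$. What the paper's route buys in exchange is the genuine parametrization: $W$ and $R$ range freely over all admissible operators, so the statement meshes with the exact-dual theory and with the converse direction --- the one you sketch in your last sentence, where, to conclude that the displayed Bessel sequence is in fact a \emph{frame} for $\Vsb$ when $\e<1$, you should invoke Remark~\ref{Obs definicion de mdoa}(iii) (as the paper does in the proof of Lemma~\ref{l002}).
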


\section{Improving approximation}

Given a frame $\mathcal{F} = \{f_k\}_{k=1}^{\infty}$ in $\Wsb$ and
an approximate oblique dual frame $\mathcal{G} = \{g_k
\}_{k=1}^{\infty} \subset\Vsb $ of it, the following proposition
says that it is possible to construct from $\mathcal{G}$ other
approximate oblique dual frames for $\mathcal{F}$ in $\Vsb$ that
provide a reconstruction as ``good" as needed. This is analogous to
a result for approximate dual frames \cite[Proposition
3.2]{Christensen-Laugesen (2010)}.
\begin{prop}\label{p001}
Let $\Hil=\mathcal{W} \oplus \mathcal{V}^{\perp}$. Let $\mathcal{F}
= \{f_k\}_{k=1}^{\infty} \subset\Wsb$ and $\mathcal{G} =
\{g_k\}_{k=1}^{\infty} \subset\Vsb $ be $\e$-approximate oblique
dual frames with $0 \leq \e < 1$. For a fixed $ N\in\N $, let
\begin{equation}\label{e004}
\widetilde{g}_k^{(N)} =\sum_{ n=0 }^{ N } (\pi_{\Vsb\Wsb^{\bot}} -
T_{\mathcal{G}} T_{\mathcal{F}}^{\ast})^n g_k
\end{equation}
and $\mathcal{\widetilde{G}}_{N} = \{\widetilde{g}_k^{(N)}
\}_{k=1}^{\infty}$. Then $\mathcal{\widetilde{G}}_{N}$ is an
$\e^{N+1}$-approximate oblique dual frame of $\mathcal{F}$ in $\Vsb$
and
\begin{equation}\label{e005}
\norm{\pi_{\Vsb\Wsb^{\bot}} - T_{\mathcal{\widetilde{G}}_{N}}
T_{\mathcal{F}}^{\ast}} \longrightarrow 0 \quad as \quad
N\rightarrow \infty.
\end{equation}
\end{prop}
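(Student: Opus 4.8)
The plan is to reduce everything to the single operator $B := \pi_{\Vsb\Wsb^{\perp}} - T_{\mathcal{G}}T_{\mathcal{F}}^{\ast}$ and to prove that the ``defect'' of $\widetilde{\mathcal{G}}_N$ is exactly $B^{N+1}$. First, since $\mathcal{F}$ and $\mathcal{G}$ are $\e$-approximate oblique dual frames, Lemma~\ref{l001} guarantees that both norm conditions in (\ref{e001}) hold, so in particular $\norm{B}\le\e$. Because $\mathcal{R}(T_{\mathcal{G}})=\Vsb$ and $\pi_{\Vsb\Wsb^{\perp}}$ has range $\Vsb$, the operator $B$ maps $\Hil$ into $\Vsb$; hence each $\widetilde{g}_k^{(N)}=\big(\sum_{n=0}^{N}B^{n}\big)g_k$ lies in $\Vsb$. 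Writing $g_k=T_{\mathcal{G}}\dl_k$ and invoking \cite[Theorem 3.2.3]{Christensen (2016)} exactly as in the proof of Lemma~\ref{l002}, I would record that $\widetilde{\mathcal{G}}_N$ is a Bessel sequence in $\Vsb$ whose synthesis operator is $T_{\widetilde{\mathcal{G}}_N}=\sum_{n=0}^{N}B^{n}T_{\mathcal{G}}$ (the finite sum $\sum_{n=0}^N B^n$ is bounded, which is all that result needs).

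The crucial algebraic identity is $B\,\pi_{\Vsb\Wsb^{\perp}}=B$. To obtain it I would first show $T_{\mathcal{F}}^{\ast}\pi_{\Vsb\Wsb^{\perp}}=T_{\mathcal{F}}^{\ast}$: for $f\in\Hil$ the $k$-th entry of the left side is $\langle \pi_{\Vsb\Wsb^{\perp}}f,f_k\rangle=\langle f,\pi_{\Wsb\Vsb^{\perp}}f_k\rangle=\langle f,f_k\rangle$, using $(\pi_{\Vsb\Wsb^{\perp}})^{\ast}=\pi_{\Wsb\Vsb^{\perp}}$ and $f_k\in\Wsb$. Since $\pi_{\Vsb\Wsb^{\perp}}$ is idempotent, it then follows that $B\pi_{\Vsb\Wsb^{\perp}}=\pi_{\Vsb\Wsb^{\perp}}-T_{\mathcal{G}}\big(T_{\mathcal{F}}^{\ast}\pi_{\Vsb\Wsb^{\perp}}\big)=\pi_{\Vsb\Wsb^{\perp}}-T_{\mathcal{G}}T_{\mathcal{F}}^{\ast}=B$, and iterating gives $B^{n}\pi_{\Vsb\Wsb^{\perp}}=B^{n}$ for every $n\ge1$.

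With this relation the sum telescopes. Writing $\pi_{\Vsb\Wsb^{\perp}}=B+T_{\mathcal{G}}T_{\mathcal{F}}^{\ast}$, I would compute
\begin{align*}
T_{\widetilde{\mathcal{G}}_N}T_{\mathcal{F}}^{\ast}
&=\sum_{n=0}^{N}B^{n}T_{\mathcal{G}}T_{\mathcal{F}}^{\ast}
=\sum_{n=0}^{N}B^{n}\big(\pi_{\Vsb\Wsb^{\perp}}-B\big)\\
&=\Big(\pi_{\Vsb\Wsb^{\perp}}+\sum_{n=1}^{N}B^{n}\Big)-\sum_{n=1}^{N+1}B^{n}
=\pi_{\Vsb\Wsb^{\perp}}-B^{N+1},
\end{align*}
the middle step using $B^{n}\pi_{\Vsb\Wsb^{\perp}}=B^{n}$ for $n\ge1$ and $\pi_{\Vsb\Wsb^{\perp}}$ itself for $n=0$. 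Hence $\pi_{\Vsb\Wsb^{\perp}}-T_{\widetilde{\mathcal{G}}_N}T_{\mathcal{F}}^{\ast}=B^{N+1}$, so that $\norm{\pi_{\Vsb\Wsb^{\perp}}-T_{\widetilde{\mathcal{G}}_N}T_{\mathcal{F}}^{\ast}}=\norm{B^{N+1}}\le\norm{B}^{N+1}\le\e^{N+1}<1$. Since $\widetilde{\mathcal{G}}_N$ is a Bessel sequence in $\Vsb$ and this norm is $<1$, Remark~\ref{Obs definicion de mdoa}(iii) upgrades it to a frame for $\Vsb$, so $\widetilde{\mathcal{G}}_N$ is an $\e^{N+1}$-approximate oblique dual frame of $\mathcal{F}$ in $\Vsb$; and because $0\le\e<1$, the bound $\e^{N+1}\to0$ as $N\to\infty$ yields (\ref{e005}).

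The step I expect to be the main obstacle is justifying $B\pi_{\Vsb\Wsb^{\perp}}=B$ cleanly, since this is precisely what collapses the sum — the rest is bookkeeping. I would be careful that the intertwining relation $T_{\mathcal{F}}^{\ast}\pi_{\Vsb\Wsb^{\perp}}=T_{\mathcal{F}}^{\ast}$ is applied on the correct side, and that the identification of the synthesis operator via \cite[Theorem 3.2.3]{Christensen (2016)} is legitimate before $\widetilde{\mathcal{G}}_N$ is known to be a frame (it only requires that $\mathcal{G}$ be Bessel and that $\sum_{n=0}^{N}B^{n}$ be bounded).
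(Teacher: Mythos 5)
Your proof is correct and follows essentially the same route as the paper's: both identify the defect operator $\pi_{\Vsb\Wsb^{\perp}}-T_{\widetilde{\mathcal{G}}_N}T_{\mathcal{F}}^{\ast}$ with $(\pi_{\Vsb\Wsb^{\perp}}-T_{\mathcal{G}}T_{\mathcal{F}}^{\ast})^{N+1}$ by a telescoping sum, using the fact that $\pi_{\Vsb\Wsb^{\perp}}-T_{\mathcal{G}}T_{\mathcal{F}}^{\ast}$ is unaffected by precomposition with $\pi_{\Vsb\Wsb^{\perp}}$ (the paper phrases this as $\Wsb^{\perp}\subseteq\mathcal{N}(\pi_{\Vsb\Wsb^{\perp}}-T_{\mathcal{G}}T_{\mathcal{F}}^{\ast})$, you as $T_{\mathcal{F}}^{\ast}\pi_{\Vsb\Wsb^{\perp}}=T_{\mathcal{F}}^{\ast}$). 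Your operator-level presentation, and your explicit verification of the Bessel/frame property of $\widetilde{\mathcal{G}}_N$ via Remark~\ref{Obs definicion de mdoa}(iii), are only cosmetic refinements of the paper's pointwise computation.
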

\begin{proof}
Let $\widetilde{g}_k^{(N)} = \sum_{ n=0 }^{ N }
(\pi_{\Vsb\Wsb^{\bot}} - T_{\mathcal{G}} T_{\mathcal{F}}^{\ast})^n
g_k $ for $ N\in\N $ and $\mathcal{\widetilde{G}}_{N} = \{
\widetilde{g}_k^{(N)}\}_{k=1}^{\infty}$.

Since $\{g_k\}_{k=1}^{\infty}$ is a Bessel sequence and $\pi_{\Vsb
\Wsb^{\bot}} - T_{\mathcal{G}} T_{\mathcal{F}}^{\ast}$ is a bounded
operator, it follows that $\{\widetilde{g}_k^{(N)}
\}_{k=1}^{\infty}$ is a Bessel sequence. Hence, by \cite[Theorem
3.2.3]{Christensen (2016)} its synthesis operator $
T_{\mathcal{\widetilde{G}}_{N}}$ is well defined. Let $ f\in\Hil $,
then
\begin{align*}
T_{\mathcal{\widetilde{G}}_{N}} T_{\mathcal{F}}^{\ast} f =&
\sum_{k=1}^{\infty} \langle f, f_k \rangle \widetilde{g}_k^{(N)} =
\sum_{k=1}^{\infty} \langle f, f_k \rangle \sum_{ n=0 }^{ N }
(\pi_{\Vsb\Wsb^{\bot}} - T_{\mathcal{G}} T_{\mathcal{F}}^{\ast})^n g_k\\
=& \sum_{ n=0 }^{ N } (\pi_{\Vsb\Wsb^{\bot}} - T_{\mathcal{G}}
T_{\mathcal{F}}^{\ast})^n T_{\mathcal{G}}
T_{\mathcal{F}}^{\ast} f\\
=& \sum_{ n=0 }^{ N } (\pi_{\Vsb\Wsb^{\bot}} - T_{\mathcal{G}}
T_{\mathcal{F}}^{\ast})^n (\pi_{\Vsb\Wsb^{\bot}} - (\pi_{\Vsb
\Wsb^{\bot}} -
T_{\mathcal{G}} T_{\mathcal{F}}^{\ast})) \pi_{\Vsb\Wsb^{\bot}} f\\
=& \sum_{ n=0 }^{ N } (\pi_{\Vsb\Wsb^{\bot}} - T_{\mathcal{G}}
T_{\mathcal{F}}^{\ast})^n \pi_{\Vsb\Wsb^{\bot}} f - \sum_{ n=0 }^{ N
} (\pi_{\Vsb\Wsb^{\bot}} - T_{\mathcal{G}} T_{\mathcal{F}}^{\ast}
)^{ n+1 }
\pi_{\Vsb\Wsb^{\bot}} f \\
=& \pi_{\Vsb\Wsb^{\bot}} f - (\pi_{\Vsb\Wsb^{\bot}} -
T_{\mathcal{G}} T_{\mathcal{F}}^{\ast})^{ N+1 }
\pi_{\Vsb\Wsb^{\bot}} f
\end{align*}
Now, using that $\Wsb^{ \bot } \subseteq \mathcal{N}(\pi_{\Vsb
\Wsb^{\bot}} - T_{\mathcal{G}} T_{\mathcal{F}}^{\ast}) $,
\begin{equation*}
\pi_{\Vsb\Wsb^{\bot}} f - T_{\mathcal{\widetilde{G}}_{N}}
T_{\mathcal{F}}^{\ast} f = (\pi_{\Vsb\Wsb^{\bot}} - T_{\mathcal{G}}
T_{\mathcal{F}}^{\ast})^{ N+1 } \pi_{\Vsb\Wsb^{\bot}} f = (
\pi_{\Vsb\Wsb^{\bot}} - T_{\mathcal{G}} T_{\mathcal{F}}^{\ast})^{
N+1 } f
\end{equation*}
Therefore, $\pi_{\Vsb\Wsb^{\bot}} - T_{\mathcal{\widetilde{G}}_{N}}
T_{\mathcal{F}}^{\ast} = (\pi_{\Vsb\Wsb^{\bot}} - T_{\mathcal{G}}
T_{\mathcal{F}}^{\ast})^{ N+1 }$ and
\begin{equation}\label{e007}
\norm{\pi_{\Vsb\Wsb^{\bot}} - T_{\mathcal{\widetilde{G}}_{N}}
T_{\mathcal{F}}^{\ast}} \leq \norm{\pi_{\Vsb\Wsb^{\bot}} -
T_{\mathcal{G}} T_{\mathcal{F}}^{\ast}}^{ N+1 }  \leq\e^{N+1}
\rightarrow 0
\end{equation}
as $ N\rightarrow \infty. $\qedhere
\end{proof}
\begin{rem}
Note that in the previous proof we obtain

\centerline{$\pi_{\Vsb\Wsb^{\bot}} - T_{\mathcal{\widetilde{G}}_{N}}
T_{\mathcal{F}}^{\ast} = (\pi_{ \Vsb\Wsb^{\bot}} - T_{\mathcal{G}}
T_{\mathcal{F}}^{\ast})^{ N+1 }$.}

Considering
\begin{equation}\label{E LN}
L_N = \sum_{ n=0}^{ N } (\pi_{\Vsb\Wsb^{\bot}} - T_{\mathcal{G}}
T_{\mathcal{F}}^{\ast})^n,
\end{equation}
the operator $ T_{ \mathcal{\widetilde{G}}_{N} }$ can be expressed
as $ T_{ \mathcal{\widetilde{G}}_{N} } = L_N T_{\mathcal{G}}$.

Setting $\widetilde{f}_k^{(N)}=\sum_{ n=0 }^{ N }
(\pi_{\Wsb\Vsb^{\bot}} - T_{\mathcal{F}}T_{\mathcal{G}}^{\ast})^n
f_k$ and $\mathcal{\widetilde{F}}_{N} = \{\widetilde{f}_k^{(N)}
\}_{k=1}^{\infty}$, we have $ T_{ \mathcal{\widetilde{F}}_{N} } =
L_N^* T_{\mathcal{F}}$, $\pi_{\Vsb\Wsb^{\bot}} - T_{\mathcal{G}}
T_{\mathcal{\widetilde{F}}_{N}}^{\ast} = (\pi_{\Vsb\Wsb^{\bot}} -
T_{\mathcal{G}} T_{\mathcal{F}}^{\ast})^{ N+1 }$ and
\begin{equation}\label{e009}
\norm{\pi_{\Vsb\Wsb^{\bot}} - T_{\mathcal{G}}
T_{\mathcal{\widetilde{F}}_{N}}^{\ast}} \leq
\norm{\pi_{\Vsb\Wsb^{\bot}} - T_{\mathcal{G}}
T_{\mathcal{F}}^{\ast}}^{ N+1 } \leq\e^{N+1} \rightarrow 0. \newline
\end{equation}
From (\ref{e007}) and (\ref{e009}), we observe that we can improve
the approximation given by the approximate oblique dual frames
$\mathcal{F}$ and $\mathcal{G}$, modifying either of them.
\end{rem}

The following proposition shows that the frames
$\mathcal{\widetilde{F}}_{N}$ and $\mathcal{\widetilde{G}}_{N}$
defined previously are related with oblique dual frames of
$\mathcal{F}$ and $\mathcal{G}$.

\begin{prop}
Let $\Hil=\mathcal{W} \oplus \mathcal{V}^{\perp}$. Let $\mathcal{F}
= \{f_k\}_{k=1}^{\infty} \subset\Wsb$ and $\mathcal{G} =
\{g_k\}_{k=1}^{\infty} \subset\Vsb $ be $\e$-approximate oblique
dual frames with $0 \leq \e < 1$. Then $ I - (\pi_{\Vsb\Wsb^{\bot}}
- T_{\mathcal{G}} T_{\mathcal{F}}^{\ast} ) $ is invertible and

\centerline{$L := (I - (\pi_{\Vsb\Wsb^{\bot}} - T_{\mathcal{G}}
T_{\mathcal{F}}^{\ast}))^{-1} = I + \sum_{n=1}^{\infty} (
\pi_{\Vsb\Wsb^{\bot}} - T_{\mathcal{G}} T_{\mathcal{F}}^{\ast}
)^n.$}

\noindent Let $\mathcal{\widetilde{G}} = \{\widetilde{g}_k
\}_{k=1}^{\infty}$ with $\widetilde{g}_k = LT_{\mathcal{G}} \delta_k
$ and $\mathcal{\widetilde{F}} = \{\widetilde{f}_k
\}_{k=1}^{\infty}$ with $\widetilde{f}_k = L^{\ast}
T_{\mathcal{F}}\delta_k $. Then:
\begin{enumerate}
  \item[(i)] $\mathcal{\widetilde{G}}$ is an oblique dual frame
of $\mathcal{F}$ in $\Vsb$.
  \item[(ii)] $\mathcal{\widetilde{F}}$ is an oblique dual frame of $\mathcal{G}$
in $\Wsb$.
  \item[(iii)] $\mathcal{\widetilde{G}}$ is an
$\frac{\e}{1-\e}$-approximate oblique dual frame of
$\mathcal{\widetilde{F}}$ in $\Vsb$.
\end{enumerate}
\end{prop}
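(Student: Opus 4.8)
The plan is to abbreviate $ M := \pi_{ \Vsb \Wsb^{ \perp } } - T_{ \mathcal{G} } T_{ \mathcal{F} }^{ \ast } $, so that the hypothesis reads $ \norm{ M } \leq \e < 1 $, and to record two structural identities for $ M $ that drive the whole argument. Since $ \mathcal{R}( T_{ \mathcal{G} } ) = \Vsb $ and $ \pi_{ \Vsb \Wsb^{ \perp } } $ restricts to the identity on $ \Vsb $, one has $ \pi_{ \Vsb \Wsb^{ \perp } } T_{ \mathcal{G} } = T_{ \mathcal{G} } $, hence $ \pi_{ \Vsb \Wsb^{ \perp } } M = M $. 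On the other hand, every $ f \in \Wsb^{ \perp } $ satisfies $ \pi_{ \Vsb \Wsb^{ \perp } } f = 0 $ and $ T_{ \mathcal{F} }^{ \ast } f = 0 $ (as $ f_k \in \Wsb $), so $ \Wsb^{ \perp } \subseteq \mathcal{N}( M ) $; since $ I - \pi_{ \Vsb \Wsb^{ \perp } } $ has range $ \Wsb^{ \perp } $, this gives $ M \pi_{ \Vsb \Wsb^{ \perp } } = M $. The invertibility of $ I - M $ and the series for $ L $ then follow from Neumann's Theorem exactly as in Remark~\ref{Obs definicion de mdoa}(ii). Because $ \widetilde{g}_k = L T_{ \mathcal{G} } \delta_k = L g_k $ and $ L $ is bounded, $ \mathcal{\widetilde{G}} $ is a Bessel sequence with $ T_{ \mathcal{\widetilde{G}} } = L T_{ \mathcal{G} } $, and $ \mathcal{R}( M ) \subseteq \Vsb $ forces $ \widetilde{g}_k \in \Vsb $; symmetrically $ T_{ \mathcal{\widetilde{F}} } = L^{ \ast } T_{ \mathcal{F} } $ with $ \widetilde{f}_k \in \Wsb $.

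For (i) I would compute $ T_{ \mathcal{\widetilde{G}} } T_{ \mathcal{F} }^{ \ast } = L T_{ \mathcal{G} } T_{ \mathcal{F} }^{ \ast } = L( \pi_{ \Vsb \Wsb^{ \perp } } - M ) $ and use $ M = M \pi_{ \Vsb \Wsb^{ \perp } } $ to factor $ \pi_{ \Vsb \Wsb^{ \perp } } - M = ( I - M ) \pi_{ \Vsb \Wsb^{ \perp } } $; since $ L = ( I - M )^{ -1 } $ this collapses to $ T_{ \mathcal{\widetilde{G}} } T_{ \mathcal{F} }^{ \ast } = \pi_{ \Vsb \Wsb^{ \perp } } $. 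As $ \mathcal{F} $ is a frame for $ \Wsb $ and $ \mathcal{\widetilde{G}} $ is Bessel in $ \Vsb $, this is the case $ \e = 0 $ of approximate oblique duality, so Remark~\ref{Obs definicion de mdoa}(iii) makes $ \mathcal{\widetilde{G}} $ a frame for $ \Vsb $, whence $ \mathcal{\widetilde{G}} $ is an oblique dual of $ \mathcal{F} $ in $ \Vsb $. Part (ii) is the adjoint statement: using $ ( \pi_{ \Vsb \Wsb^{ \perp } } )^{ \ast } = \pi_{ \Wsb \Vsb^{ \perp } } $ one gets $ M^{ \ast } = \pi_{ \Wsb \Vsb^{ \perp } } - T_{ \mathcal{F} } T_{ \mathcal{G} }^{ \ast } $ and $ L^{ \ast } = ( I - M^{ \ast } )^{ -1 } $, and $ M^{ \ast } $ satisfies the two analogous identities with $ \Vsb $ and $ \Wsb $ interchanged, giving $ T_{ \mathcal{\widetilde{F}} } T_{ \mathcal{G} }^{ \ast } = L^{ \ast } T_{ \mathcal{F} } T_{ \mathcal{G} }^{ \ast } = \pi_{ \Wsb \Vsb^{ \perp } } $ and, as before, that $ \mathcal{\widetilde{F}} $ is a frame for $ \Wsb $.

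For (iii) I would write $ T_{ \mathcal{\widetilde{G}} } T_{ \mathcal{\widetilde{F}} }^{ \ast } = L T_{ \mathcal{G} } T_{ \mathcal{F} }^{ \ast } L $; the computation in (i) already gives $ L T_{ \mathcal{G} } T_{ \mathcal{F} }^{ \ast } = \pi_{ \Vsb \Wsb^{ \perp } } $, so $ T_{ \mathcal{\widetilde{G}} } T_{ \mathcal{\widetilde{F}} }^{ \ast } = \pi_{ \Vsb \Wsb^{ \perp } } L $ and
\begin{equation*}
\pi_{ \Vsb \Wsb^{ \perp } } - T_{ \mathcal{\widetilde{G}} } T_{ \mathcal{\widetilde{F}} }^{ \ast } = \pi_{ \Vsb \Wsb^{ \perp } } ( I - L ).
\end{equation*}
The main obstacle is that bounding this naively would introduce a spurious factor $ \norm{ \pi_{ \Vsb \Wsb^{ \perp } } } \geq 1 $. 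This is precisely where the identity $ \pi_{ \Vsb \Wsb^{ \perp } } M = M $ is essential: since $ I - L = -\sum_{ n = 1 }^{ \infty } M^n $ and $ \pi_{ \Vsb \Wsb^{ \perp } } M^n = M^n $ for every $ n \geq 1 $, the projection is absorbed and $ \pi_{ \Vsb \Wsb^{ \perp } } ( I - L ) = I - L = -\sum_{ n = 1 }^{ \infty } M^n $. Hence
\begin{equation*}
\norm{ \pi_{ \Vsb \Wsb^{ \perp } } - T_{ \mathcal{\widetilde{G}} } T_{ \mathcal{\widetilde{F}} }^{ \ast } } \leq \sum_{ n = 1 }^{ \infty } \norm{ M }^n \leq \sum_{ n = 1 }^{ \infty } \e^n = \frac{ \e }{ 1 - \e },
\end{equation*}
which, together with (i) and (ii) identifying $ \mathcal{\widetilde{G}} $ and $ \mathcal{\widetilde{F}} $ as frames for $ \Vsb $ and $ \Wsb $, yields the claim.
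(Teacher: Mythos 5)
Your proof is correct, and for parts (i) and (ii) it takes a genuinely different route from the paper. The paper proves (i) by a limiting argument: it takes the truncated operators $L_N$ of (\ref{E LN}) from Proposition~\ref{p001}, notes $T_{\mathcal{\widetilde{G}}_N}=L_N T_{\mathcal{G}}\rightarrow L T_{\mathcal{G}}=T_{\mathcal{\widetilde{G}}}$, and passes to the limit in the estimate $\norm{\pi_{\Vsb\Wsb^{\bot}}-T_{\mathcal{\widetilde{G}}_N}T_{\mathcal{F}}^{\ast}}\leq\e^{N+1}$ of (\ref{e007}) to conclude $T_{\mathcal{\widetilde{G}}}T_{\mathcal{F}}^{\ast}=\pi_{\Vsb\Wsb^{\bot}}$, with (ii) proved ``similarly''. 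You instead verify exact duality purely algebraically from the two commutation identities $\pi_{\Vsb\Wsb^{\perp}}M=M$ and $M\pi_{\Vsb\Wsb^{\perp}}=M$ for $M=\pi_{\Vsb\Wsb^{\perp}}-T_{\mathcal{G}}T_{\mathcal{F}}^{\ast}$, which yield the factorization $\pi_{\Vsb\Wsb^{\perp}}-M=(I-M)\pi_{\Vsb\Wsb^{\perp}}$ and hence $T_{\mathcal{\widetilde{G}}}T_{\mathcal{F}}^{\ast}=L(I-M)\pi_{\Vsb\Wsb^{\perp}}=\pi_{\Vsb\Wsb^{\perp}}$ in one line, with (ii) obtained by taking adjoints. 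Note that your identity $M\pi_{\Vsb\Wsb^{\perp}}=M$ is precisely the fact $\Wsb^{\perp}\subseteq\mathcal{N}(M)$ that the paper uses inside the proof of Proposition~\ref{p001}, so in effect you have inlined that dependency: your argument is self-contained, avoids limits, and does not rely on the preceding proposition, whereas the paper's route reuses the machinery of Section 5 and exhibits $\mathcal{\widetilde{G}}$ conceptually as the limit of the improving duals $\mathcal{\widetilde{G}}_N$. For (iii) the two arguments coincide: both reduce to $\pi_{\Vsb\Wsb^{\perp}}-T_{\mathcal{\widetilde{G}}}T_{\mathcal{\widetilde{F}}}^{\ast}=\pi_{\Vsb\Wsb^{\perp}}(I-L)$, absorb the projection because $\mathcal{R}(I-L)\subseteq\Vsb$ (your $\pi_{\Vsb\Wsb^{\perp}}M^{n}=M^{n}$ is the same observation), and bound the Neumann tail by $\e/(1-\e)$, correctly avoiding the spurious factor $\norm{\pi_{\Vsb\Wsb^{\perp}}}$. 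You are in fact slightly more careful than the paper on one point: you state explicitly that the frame property of $\mathcal{\widetilde{G}}$ (respectively $\mathcal{\widetilde{F}}$) follows from Remark~\ref{Obs definicion de mdoa}(iii) applied with error $0<1$, a step the paper leaves implicit when it declares the oblique duality.
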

\begin{proof}
The assertions about $ I - (\pi_{\Vsb\Wsb^{\bot}} - T_{\mathcal{G}}
T_{\mathcal{F}}^{\ast}) $ follow from Neumann's Theorem.

If $L_N$ is as in (\ref{E LN}), then $ L_N \rightarrow L $ as $ N
\rightarrow \infty $. So, $T_{ \mathcal{\widetilde{G}}_N }=L_N
T_{\mathcal{G}} \rightarrow T_{ \mathcal{\widetilde{G}} }=L
T_{\mathcal{G}}$ as $ N \rightarrow \infty $. Using (\ref{e007}), we
have

\centerline{$||\pi_{\Vsb\Wsb^{\bot}}-T_{ \mathcal{\widetilde{G}} }
T_{\mathcal{F}}^{\ast} || = ||\pi_{\Vsb\Wsb^{\bot}}-T_{
\mathcal{\widetilde{G}}_N } T_{\mathcal{F}}^{\ast} || +||T_{
\mathcal{\widetilde{G}}_N } T_{\mathcal{F}}^{\ast}-T_{ \mathcal{\widetilde{G}} } T_{\mathcal{F}}^{\ast}||\\
\leq \e^{ N+1 } +||T_{ \mathcal{\widetilde{G}}_N }-T_{
\mathcal{\widetilde{G}} }||\, || T_{\mathcal{F}}||$}

\noindent Since the right hand side tends to $0$ as $ N \rightarrow
\infty $, $T_{ \mathcal{\widetilde{G}} }
T_{\mathcal{F}}^{\ast}=\pi_{\Vsb\Wsb^{\bot}}$. This shows that
$\mathcal{\widetilde{G}}$ is an oblique dual frame of $\mathcal{F}$
in $\Vsb$. Similarly, it can be proved that
$\mathcal{\widetilde{F}}$ is an oblique dual frame of $\mathcal{G}$
in $\Wsb$. Since $\mathcal{R}(I-L)\subseteq\Vsb$, by (i),

\centerline{$\pi_{\Vsb\Wsb^{\bot}}-T_{ \mathcal{\widetilde{G}} } T_{
\widetilde{\mathcal{F}} }^{\ast} =\pi_{\Vsb
\Wsb^{\bot}}-T_{\widetilde{\mathcal{G}}}T_{\mathcal{F}}^*L =
\pi_{\Vsb\Wsb^{\bot}}(I-L)=
\sum_{n=1}^{\infty}(\pi_{\Vsb\Wsb^{\bot}} - T_{\mathcal{G}}
T_{\mathcal{F}}^{\ast})^n.$} \noindent Then
$||\pi_{\Vsb\Wsb^{\bot}}-T_{ \mathcal{\widetilde{G}} } T_{
\widetilde{\mathcal{F}} }^{\ast}|| \leq \frac{\e}{1-\e}$.
\end{proof}
\begin{rem}\label{R Fmonio Gmonio no duales}
We note that $\mathcal{\widetilde{G}}$ is an oblique dual frame of
$\mathcal{\widetilde{F}}$ in $\Vsb$ if and only if $\mathcal{G}$ is
an oblique dual frame of $\mathcal{F}$ in $\Vsb$. In this case,
$\mathcal{\widetilde{G}}=\mathcal{G}$ and
$\mathcal{\widetilde{F}}=\mathcal{G}$. In effect, assume that
$\mathcal{\widetilde{G}}$ is an oblique dual frame of
$\mathcal{\widetilde{F}}$ in $\Vsb$. Then, from the equalities above
$L=I$. Conversely, if $\mathcal{G}$ is an oblique dual frame of
$\mathcal{F}$ in $\Vsb$, then $\e=0$ and, from the definition of
$L$, $L=I$. In both cases, $\mathcal{\widetilde{G}}=\mathcal{G}$ and
$\mathcal{\widetilde{F}}=\mathcal{G}$.
\end{rem}
\begin{rem}
It is natural to ask if we can have an approximation as close as
wanted to $\pi_{\Vsb\Wsb^{\bot}}f$ using simultaneously the frames
$\mathcal{\widetilde{F}}_{N}$ and $\mathcal{\widetilde{G}}_{N}$. The
answer is no. More precisely, since

\centerline{$||\pi_{\Vsb\Wsb^{\bot}}-T_{ \mathcal{\widetilde{G}} }
T_{ \widetilde{\mathcal{F}} }^{\ast}|| \leq ||\pi_{\Vsb
\Wsb^{\bot}}-T_{ \mathcal{\widetilde{G}}_{N} } T_{
\widetilde{\mathcal{F}}_{N} }^{\ast}||+||T_{
\mathcal{\widetilde{G}}_{N} } T_{ \widetilde{\mathcal{F}}_{N}
}^{\ast}-T_{ \mathcal{\widetilde{G}} } T_{ \widetilde{\mathcal{F}}
}^{\ast}||$}

\noindent and

\centerline{$\lim_{N \rightarrow \infty}||T_{
\mathcal{\widetilde{G}}_{N} } T_{ \widetilde{\mathcal{F}}_{N}
}^{\ast}-T_{ \mathcal{\widetilde{G}} } T_{ \widetilde{\mathcal{F}}
}^{\ast}||=0$,}

\noindent we obtain, using Remark~\ref{R Fmonio Gmonio no duales},

\centerline{$\liminf_{N\rightarrow\infty}||\pi_{\Vsb
\Wsb^{\bot}}-T_{ \mathcal{\widetilde{G}}_{N}} T_{
\widetilde{\mathcal{F}}_{N} }^{\ast}||\geq||\pi_{\Vsb
\Wsb^{\bot}}-T_{ \mathcal{\widetilde{G}} } T_{
\widetilde{\mathcal{F}} }^{\ast}||>0$.}
\end{rem}

\section{Approximate oblique dual frames by perturbation}

The following propositions say that if two frames are ``close'',
each oblique dual frame of one of them is an approximate oblique
dual frame of the other. They are extensions  of \cite[Proposition
4.1 and Proposition 4.3]{Christensen-Laugesen (2010)}, which are
results about approximate dual frames.
\begin{prop}
Let $\Hil=\mathcal{W} \oplus \mathcal{V}^{\perp}$. Let $0 \leq \e <
1$, $\mathcal{\widetilde{G}} = \{\widetilde{g}_{k}\}_{k=1}^{\infty}$
be a frame for $\Vsb$, $\mathcal{G} = \{g_k\}_{k=1}^{\infty}
\subset\Vsb $ a sequence such that $\sum_{k=1}^{\infty} | \langle f,
g_k - \widetilde{g}_{k} \rangle|^2 \leq r \norm{f }^2 $ for each $
f\in\Vsb $, and $\mathcal{F} = \{f_k\}_{k=1}^{\infty}$ an oblique
dual frame of $\mathcal{\widetilde{G}}$ in $\Wsb$ whose upper frame
bound $\beta $ satisfies \mbox{$\sqrt{\beta r} \leq \e$}. Then,
$\mathcal{G}$ is an $\e$-approximate dual frame of $\mathcal{F}$ in
$\Vsb$.
\end{prop}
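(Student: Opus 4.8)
The plan is to verify the first inequality in (\ref{e001}), namely $\norm{\pi_{\Wsb \Vsb^{\perp}} - T_{\mathcal{F}} T_{\mathcal{G}}^{*}} \leq \e$, by exploiting that $\mathcal{F}$ is a \emph{genuine} oblique dual of $\mathcal{\widetilde{G}}$. Since $\mathcal{F}$ is a frame for $\Wsb$ and $\mathcal{\widetilde{G}}$ is a frame for $\Vsb$ that are oblique dual frames, we have $T_{\mathcal{F}} T_{\mathcal{\widetilde{G}}}^{*} = \pi_{\Wsb \Vsb^{\perp}}$. The idea is then to measure the error incurred when the synthesis operator $T_{\mathcal{\widetilde{G}}}$ is replaced by that of the perturbed sequence $T_{\mathcal{G}}$.

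First I would promote the hypothesis $\sum_{k=1}^{\infty} \abs{\langle f, g_k - \widetilde{g}_k\rangle}^2 \leq r\norm{f}^2$, assumed a priori only for $f \in \Vsb$, to all of $\Hil$. This is the key observation: since $g_k, \widetilde{g}_k \in \Vsb$, for arbitrary $f \in \Hil$ one has $\langle f, g_k - \widetilde{g}_k\rangle = \langle P_{\Vsb} f, g_k - \widetilde{g}_k\rangle$, whence $\sum_{k=1}^{\infty} \abs{\langle f, g_k - \widetilde{g}_k\rangle}^2 \leq r\norm{P_{\Vsb}f}^2 \leq r\norm{f}^2$. Consequently $\{g_k - \widetilde{g}_k\}_{k=1}^{\infty}$ is a Bessel sequence with bound $r$; since $\mathcal{\widetilde{G}}$ is a frame (hence Bessel), so is $\mathcal{G}=\{\widetilde{g}_k + (g_k-\widetilde{g}_k)\}_{k=1}^{\infty}$, and $T_{\mathcal{G}}$ is well defined. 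Moreover the extended inequality states exactly that $\norm{(T_{\mathcal{\widetilde{G}}} - T_{\mathcal{G}})^{*}} \leq \sqrt{r}$.

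Next I would use the algebraic identity
\begin{equation*}
\pi_{\Wsb \Vsb^{\perp}} - T_{\mathcal{F}} T_{\mathcal{G}}^{*} = T_{\mathcal{F}} T_{\mathcal{\widetilde{G}}}^{*} - T_{\mathcal{F}} T_{\mathcal{G}}^{*} = T_{\mathcal{F}} (T_{\mathcal{\widetilde{G}}} - T_{\mathcal{G}})^{*}
\end{equation*}
and estimate $\norm{\pi_{\Wsb \Vsb^{\perp}} - T_{\mathcal{F}} T_{\mathcal{G}}^{*}} \leq \norm{T_{\mathcal{F}}}\,\norm{(T_{\mathcal{\widetilde{G}}} - T_{\mathcal{G}})^{*}} \leq \sqrt{\beta}\,\sqrt{r} = \sqrt{\beta r} \leq \e$, using that $\norm{T_{\mathcal{F}}}^{2}$ is the optimal upper frame bound and hence $\norm{T_{\mathcal{F}}}^{2} \leq \beta$. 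This establishes (\ref{e001}). Finally, since $\e < 1$ and $\mathcal{F},\mathcal{G}$ are Bessel sequences, Remark~\ref{Obs definicion de mdoa} guarantees that $\mathcal{F}$ is a frame for $\Wsb$ and $\mathcal{G}$ is a frame for $\Vsb$, so $\mathcal{G}$ is indeed an $\e$-approximate oblique dual frame of $\mathcal{F}$ in $\Vsb$.

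The main obstacle is the first step. The Bessel-type estimate is assumed only on $\Vsb$, so one must notice that the perturbation vectors $g_k - \widetilde{g}_k$ lie in $\Vsb$ in order to extend the bound to all of $\Hil$; without this the operator $T_{\mathcal{\widetilde{G}}} - T_{\mathcal{G}}$ could not be controlled globally and the norm estimate would break down. Everything after this reduction is routine operator-norm bookkeeping.
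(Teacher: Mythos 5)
Your proof is correct and follows essentially the same route as the paper's: both rest on the identity $\pi_{\Wsb\Vsb^{\perp}} - T_{\mathcal{F}}T_{\mathcal{G}}^{*} = T_{\mathcal{F}}(T_{\mathcal{\widetilde{G}}}-T_{\mathcal{G}})^{*}$ and the estimate $\norm{T_{\mathcal{F}}}\sqrt{r}\leq\sqrt{\beta r}\leq\e$, differing only in bookkeeping (the paper gets Besselness of $\mathcal{G}$ via a direct Cauchy--Schwarz computation with bound $(\sqrt{r}+\sqrt{\widetilde{\beta}})^2$, you via the sum of two Bessel sequences). In fact your explicit reduction $\langle f, g_k-\widetilde{g}_k\rangle=\langle P_{\Vsb}f, g_k-\widetilde{g}_k\rangle$, extending the perturbation bound from $\Vsb$ to all of $\Hil$, justifies a step the paper states without comment, namely that the hypothesis "can be written as" $\norm{T_{\mathcal{G}}^{\ast}-T_{\mathcal{\widetilde{G}}}^{\ast}}\leq\sqrt{r}$ as an operator norm on $\Hil$.
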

\begin{proof}
We will first see that $\{g_k\}_{k=1}^{\infty}$ is a Bessel
sequence. Let $f\in\Vsb $. Applying the Cauchy-Schwarz inequality,
\begin{align}\label{besprop}
\sum_{k=1}^{\infty} | \langle f, g_k \rangle|^2 \leq&
\sum_{k=1}^{\infty} \left(| \langle f, g_k - \widetilde{g}_{k}
\rangle | +
| \langle f, \widetilde{g}_{k} \rangle | \right)^2 \nonumber\\
=& \sum_{k=1}^{\infty} | \langle f, g_k - \widetilde{g}_{k}
\rangle|^2 + 2\sum_{k=1}^{\infty} | \langle f, g_k -
\widetilde{g}_{k} \rangle | | \langle f, \widetilde{g}_{k} \rangle |
+
\sum_{k=1}^{\infty} | \langle f, \widetilde{g}_{k} \rangle|^2 \nonumber\\
\leq& (r + 2\sqrt{ r\widetilde{\beta} } + \widetilde{\beta}) \norm{f
}^2,
\end{align}
where $\widetilde{\beta}$ is the upper frame bound for $\{
\widetilde{g}_{k}\}_{k=1}^{\infty}$. Hence, $\{g_k
\}_{k=1}^{\infty}$ is a Bessel sequence with bound $ (\sqrt{ r } +
\sqrt{ \widetilde{\beta} })^2 $.

Note that the condition given for $\{g_k\}_{k=1}^{\infty}$ can be
written as $\norm{T_{\mathcal{G}}^{\ast} -
T_{\mathcal{\widetilde{G}}}^{\ast}} \leq\sqrt{ r }. $

We have
$$\norm{\pi_{\Wsb\Vsb^{\perp}} -
T_{\mathcal{F}} T_{\mathcal{G}}^{\ast}} = \norm{T_{ \mathcal{ F}}
T_{ \mathcal{\widetilde{G}} }^{\ast} - T_{ \mathcal{ F}}
T_{\mathcal{ G}}^{\ast}} \leq \norm{T_{\mathcal{F}} } \norm{
T_{\mathcal{\widetilde{G}}}^{\ast} - T_{\mathcal{G}}^{\ast}} \leq
\sqrt{\beta r} \leq \e. $$

Therefore, by Definition~\ref{D marcos duales oblicuos aproximados}
and Remark~\ref{Obs definicion de mdoa}(iii),
$\{f_k\}_{k=1}^{\infty}$ and $\{g_k\}_{k=1}^{\infty}$ are
$\e$-approximate oblique dual frames.
\end{proof}
\begin{prop}
Let $\Hil=\mathcal{W} \oplus \mathcal{V}^{\perp}$. Let $\e\geq0$,
$\mathcal{F} = \{f_k\}_{k=1}^{\infty}$ be a frame for $\Wsb$ with
bounds $\alpha $ and $\beta $ and $\mathcal{\widetilde{G}} =
\{\widetilde{g}_{k}\}_{k=1}^{\infty} \subset\Wsb$ such that \mbox{
$\sum_{k=1}^{\infty} | \langle f, f_k - \widetilde{g}_{k} \rangle|^2
\leq r \norm{f }^2 $ }, for $f \in\Wsb$, with $ r \leq \frac{
\alpha\e^{2} }{\left(\norm{ \pi_{\Vsb\Wsb^{\perp}} } + \e\right)^2
}$. Then,
\begin{itemize}
\item[(i)]
$\mathcal{\widetilde{G}}$ is a frame for $\Wsb$ with bounds $ (
\sqrt{ \alpha } -\sqrt{ r })^2 $ and $ (\sqrt{ r } +\sqrt{ \beta }
)^2 $.

\item[(ii)]

If $\mathcal{G} = \{g_k\}_{k=1}^{\infty} \subset\Vsb $ is given by $
g_k = \pi_{\Vsb\Wsb^{\perp}} (
T_{\mathcal{\widetilde{G}}}T_{\mathcal{\widetilde{G}}}^{\ast})^{
\dag } \widetilde{g}_{k}$, then $\mathcal{G}$ is an $\e$-approximate
oblique dual frame of $\mathcal{F}$ in $\Vsb$. \qedhere
\end{itemize}
\end{prop}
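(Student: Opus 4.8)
The plan is to handle part (i) by a standard frame-perturbation estimate and part (ii) by recognizing the prescribed $\mathcal{G}$ as the canonical oblique dual frame of $\mathcal{\widetilde{G}}$ in $\Vsb$ and then controlling $\norm{T_{\mathcal{G}}}$ via the lower frame bound obtained in (i).

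I would first recast the hypothesis as an operator estimate. Because $f_k-\widetilde{g}_k\in\Wsb$, for every $f\in\Hil$ we have $\langle f,f_k-\widetilde{g}_k\rangle=\langle P_{\Wsb}f,f_k-\widetilde{g}_k\rangle$, so the bound $\sum_k|\langle f,f_k-\widetilde{g}_k\rangle|^2\le r\norm{f}^2$ assumed on $\Wsb$ extends to $\norm{T_{\mathcal{F}}^{*}-T_{\mathcal{\widetilde{G}}}^{*}}\le\sqrt{r}$ on all of $\Hil$. I would also record that, since $\norm{\pi_{\Vsb\Wsb^{\perp}}}\ge 1$, the assumption on $r$ forces $r<\alpha$, i.e. $\sqrt{r}<\sqrt{\alpha}$. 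For (i), writing $T_{\mathcal{\widetilde{G}}}^{*}=T_{\mathcal{F}}^{*}-(T_{\mathcal{F}}^{*}-T_{\mathcal{\widetilde{G}}}^{*})$ and using the frame bounds of $\mathcal{F}$ together with the above, the triangle inequality gives
\[
(\sqrt{\alpha}-\sqrt{r})\norm{f}\le\norm{T_{\mathcal{\widetilde{G}}}^{*}f}\le(\sqrt{\beta}+\sqrt{r})\norm{f},\qquad f\in\Wsb .
\]
Since $\sqrt{\alpha}-\sqrt{r}>0$ and $\mathcal{\widetilde{G}}\subset\Wsb$, this exhibits $\mathcal{\widetilde{G}}$ as a frame for $\Wsb$ with lower bound $(\sqrt{\alpha}-\sqrt{r})^2$.

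For (ii), the key observation is that $g_k=\pi_{\Vsb\Wsb^{\perp}}S_{\mathcal{\widetilde{G}}}^{\dagger}\widetilde{g}_k$ (with $S_{\mathcal{\widetilde{G}}}=T_{\mathcal{\widetilde{G}}}T_{\mathcal{\widetilde{G}}}^{*}$) is precisely the canonical oblique dual of $\mathcal{\widetilde{G}}$ in $\Vsb$: indeed $T_{\mathcal{G}}=\pi_{\Vsb\Wsb^{\perp}}S_{\mathcal{\widetilde{G}}}^{\dagger}T_{\mathcal{\widetilde{G}}}$, whence $T_{\mathcal{G}}T_{\mathcal{\widetilde{G}}}^{*}=\pi_{\Vsb\Wsb^{\perp}}S_{\mathcal{\widetilde{G}}}^{\dagger}S_{\mathcal{\widetilde{G}}}=\pi_{\Vsb\Wsb^{\perp}}P_{\Wsb}=\pi_{\Vsb\Wsb^{\perp}}$, so $\mathcal{G}$ is a frame for $\Vsb$ dual to $\mathcal{\widetilde{G}}$. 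The decisive step is the norm bound for $T_{\mathcal{G}}$: from $T_{\mathcal{\widetilde{G}}}^{\dagger}=T_{\mathcal{\widetilde{G}}}^{*}S_{\mathcal{\widetilde{G}}}^{\dagger}$ and the self-adjointness of $S_{\mathcal{\widetilde{G}}}^{\dagger}$ one gets $\norm{S_{\mathcal{\widetilde{G}}}^{\dagger}T_{\mathcal{\widetilde{G}}}}=\norm{(T_{\mathcal{\widetilde{G}}}^{\dagger})^{*}}=\norm{T_{\mathcal{\widetilde{G}}}^{\dagger}}$, and the optimal-lower-bound formula $\norm{T_{\mathcal{\widetilde{G}}}^{\dagger}}^{-2}\ge(\sqrt{\alpha}-\sqrt{r})^2$ furnished by (i) yields $\norm{T_{\mathcal{G}}}\le\norm{\pi_{\Vsb\Wsb^{\perp}}}/(\sqrt{\alpha}-\sqrt{r})$.

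Finally I would assemble the estimate through the factorization $\pi_{\Vsb\Wsb^{\perp}}-T_{\mathcal{G}}T_{\mathcal{F}}^{*}=T_{\mathcal{G}}(T_{\mathcal{\widetilde{G}}}^{*}-T_{\mathcal{F}}^{*})$, obtaining
\[
\norm{\pi_{\Vsb\Wsb^{\perp}}-T_{\mathcal{G}}T_{\mathcal{F}}^{*}}\le\norm{T_{\mathcal{G}}}\,\sqrt{r}\le\frac{\norm{\pi_{\Vsb\Wsb^{\perp}}}\sqrt{r}}{\sqrt{\alpha}-\sqrt{r}} .
\]
A one-line rearrangement shows that $\norm{\pi_{\Vsb\Wsb^{\perp}}}\sqrt{r}/(\sqrt{\alpha}-\sqrt{r})\le\e$ is equivalent to $\sqrt{r}\,(\norm{\pi_{\Vsb\Wsb^{\perp}}}+\e)\le\e\sqrt{\alpha}$, i.e. to the hypothesis $r\le\alpha\e^2/(\norm{\pi_{\Vsb\Wsb^{\perp}}}+\e)^2$; hence the last displayed quantity is $\le\e$, which by (\ref{E D 2}) is exactly the statement that $\mathcal{G}$ is an $\e$-approximate oblique dual frame of $\mathcal{F}$ in $\Vsb$. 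The step I expect to require the most care is this middle one in (ii), namely the manipulation of the pseudo-inverse identities that turns $\norm{T_{\mathcal{G}}}$ into a quantity governed by the lower frame bound of $\mathcal{\widetilde{G}}$ coming from part (i); everything else reduces to the triangle inequality and the algebraic reformulation of the bound on $r$.
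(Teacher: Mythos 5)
Your proof is correct and takes essentially the same route as the paper's: part (i) is the standard perturbation estimate (the paper cites \cite[Corollary 22.1.5]{Christensen (2016)} where you prove it directly), and part (ii) rests on the same identification of $\mathcal{G}$ as the canonical oblique dual of $\mathcal{\widetilde{G}}$ in $\Vsb$, the same norm bound $\norm{T_{\mathcal{G}}} \leq \norm{\pi_{\Vsb\Wsb^{\perp}}}/(\sqrt{\alpha}-\sqrt{r})$, and the same factorization $\pi_{\Vsb\Wsb^{\perp}}-T_{\mathcal{G}}T_{\mathcal{F}}^{*}=T_{\mathcal{G}}(T_{\mathcal{\widetilde{G}}}^{*}-T_{\mathcal{F}}^{*})$ followed by the same algebraic rearrangement of the hypothesis on $r$. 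The only difference is that you inline proofs of the facts the paper obtains by citation (\cite[Theorem 3.2]{Christensen-Eldar (2004)} and the canonical-dual bounds), and you make explicit the extension of $\norm{T_{\mathcal{F}}^{*}-T_{\mathcal{\widetilde{G}}}^{*}}\leq\sqrt{r}$ from $\Wsb$ to all of $\Hil$, a point the paper leaves implicit.
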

\begin{proof}
(i) Since $r < \alpha$, this part follows from \cite[Corollary
22.1.5]{Christensen (2016)}.

To see (ii), observe that, by \cite[Lemma 5.1.5]{Christensen
(2016)}, the canonical dual frame $\{(
T_{\mathcal{\widetilde{G}}}T_{\mathcal{\widetilde{G}}}^{\ast})^{
\dag } \widetilde{g}_{k}\}_{k=1}^{\infty}$ of $\{\widetilde{g}_{k}
\}_{k=1}^{\infty}$ in $\Wsb$ has bounds $\frac{1}{ (\sqrt{ r } +
\sqrt{ \beta })^2 }$ and $\frac{1}{ (\sqrt{ \alpha } -\sqrt{ r } )^2
}$.

By \mbox{\cite[Theorem 3.2]{Christensen-Eldar (2004)}}, the sequence
$\mathcal{G} = \{g_k\}_{k=1}^{\infty} \subset\Vsb $ given by $ g_k =
\pi_{\Vsb\Wsb^{\perp}} (
T_{\mathcal{\widetilde{G}}}T_{\mathcal{\widetilde{G}}}^{\ast})^{
\dag } \widetilde{g}_{k}$ is an oblique dual frame of
$\mathcal{\widetilde{G}}$ in $\Vsb$. Its synthesis operator
satisfies $\norm{T_{\mathcal{G}}} \leq \frac{ \norm{
\pi_{\Vsb\Wsb^{\perp}}}}{\sqrt{\alpha} -\sqrt{r} }. $ Hence,

\centerline {$\norm{\pi_{\Vsb\Wsb^{\perp}} - T_{\mathcal{ G}} T_{
\mathcal{ F}}^{\ast}} = \norm{T_{\mathcal{ G}}
T_{\mathcal{\widetilde{G}} }^{\ast} - T_{\mathcal{ G}}T_{ \mathcal{
F}}^{\ast}} \leq
\norm{T_{\mathcal{G}}}\norm{T_{\mathcal{\widetilde{G}}}^{\ast} -
T_{\mathcal{F}}^{\ast}} \leq \frac{\norm{\pi_{\Vsb\Wsb^{\perp}}}}{
\sqrt{\alpha} -\sqrt{r}}\sqrt{r} \leq \e. $}

So $\{f_k\}_{k=1}^{\infty}$ and $\{g_k\}_{k=1}^{\infty}$ are
$\e$-approximate oblique dual frames. \qedhere
\end{proof}

\section{Approximate reconstruction in shift-invariant subspaces}

In this section we give conditions on the generators of
shift-invariant subspaces $\Wsb$ and $\Vsb$ of $\mathcal{H}=
\Lebcuad$, in order to obtain an approximate reconstruction in one
of them. We emphasize that for these results the subspaces don't
necessarily decompose $\Lebcuad$ in direct sum, as we assumed
before. As a consequence we obtain in the next section sufficient
conditions on the generators for approximate oblique duality (see
Corollaries \ref{C cond suf doa} and \ref{C cond suf doaH}). In
order to prove the results we will use this lemma:
\begin{lem}\label{L saca f afuera}
Let $\phi, \widetilde{\phi}\in\Lebcuad$ be such that
$\{T_k\phi\}_{k\in\Z}$ and $\{T_k\widetilde{\phi}\}_{k\in\Z}$ are
Bessel sequences. The following holds:
\begin{enumerate}
  \item[(i)] If $f\in\Lebcuad$, then $[\,\widehat{f},\widehat{\widetilde{\phi}}\,]\in \mathcal{C}_{{\rm per}}^{2}$ and

\centerline{$\sum_{k\in\Z}\langle\, \widehat{f},
\widehat{T_k\widetilde{\phi}} \,\rangle \widehat{T_k\phi} =
\widehat{\phi}\,[\,\widehat{f},\widehat{\widetilde{\phi}}\,]$.}

  \item[(ii)] If $ f\in\overline{\mathrm{{\rm span}}}\{T_k\phi\}_{k\in\Z}$,
then
\begin{equation}\label{E L saca f afuera}
\sum_{k\in\Z}\langle\, \widehat{f}, \widehat{T_k\widetilde{\phi}}
\,\rangle
\widehat{T_k\phi}=\widehat{f}\,[\,\widehat{\phi},\widehat{\widetilde{\phi}}\,].
\end{equation}
\end{enumerate}
\end{lem}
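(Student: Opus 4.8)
The plan is to reduce everything to the cited characterization of shift-invariant representations \cite[Lemma 9.2.2]{Christensen (2016)}. First I would record the elementary identity $\widehat{T_k\phi}(\ga)=e^{-i2\pi k\ga}\widehat{\phi}(\ga)$ and, using that $\mathcal F$ is unitary, rewrite the coefficients as $\langle\widehat{f},\widehat{T_k\tilde\phi}\rangle=\langle f,T_k\tilde\phi\rangle$. Since $\{T_k\tilde\phi\}_{k\in\Z}$ is Bessel, the sequence $c_k:=\langle\widehat{f},\widehat{T_k\tilde\phi}\rangle$ lies in $\lcuadent$, so the series $\sum_{k\in\Z}c_k\,\widehat{T_k\phi}=\mathcal F(\sum_{k\in\Z}c_kT_k\phi)$ converges in $\Lebcuad$ and, by \cite[Lemma 9.2.2]{Christensen (2016)}, equals $H\widehat{\phi}$ with $H=\sum_{k\in\Z}c_kE_{-k}\in L^2(0,1)$. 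Thus (i) will follow once I identify $H$ with the periodized product $\sum_{n\in\Z}\widehat{f}(\cdot+n)\overline{\widehat{\tilde\phi}(\cdot+n)}$.

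For the integrability statement in (i), set $G(\ga):=\sum_{n\in\Z}\widehat{f}(\ga+n)\overline{\widehat{\tilde\phi}(\ga+n)}$. Applying Cauchy--Schwartz in $n$ together with Theorem~\ref{T Benedetto}(i), which gives $\sum_n|\widehat{\tilde\phi}(\ga+n)|^2\leq\widetilde{\beta}$ a.e., yields $|G(\ga)|\leq\widetilde{\beta}^{1/2}(\sum_n|\widehat{f}(\ga+n)|^2)^{1/2}$; integrating over $[0,1]$ and unfolding the periodization gives $\int_0^1|G|^2\leq\widetilde{\beta}\,\|\widehat{f}\|^2<\infty$, so $G\in L^2(0,1)$. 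To identify $H=G$, I would compute the Fourier coefficients of $G$: since $\widehat{f}\,\overline{\widehat{\tilde\phi}}\in L^1(\R)$, the periodization is licit and
\centerline{$\langle\widehat{f},\widehat{T_k\tilde\phi}\rangle=\int_{\R}\widehat{f}(\ga)e^{i2\pi k\ga}\overline{\widehat{\tilde\phi}(\ga)}\,\ud\ga=\int_0^1 e^{i2\pi k\ga}G(\ga)\,\ud\ga,$}
so $c_k$ is the $(-k)$-th Fourier coefficient of $G$. Hence $H(\ga)=\sum_{k\in\Z}c_ke^{-i2\pi k\ga}$ is exactly the Fourier series of $G$, i.e. $H=G$ in $L^2(0,1)$, and therefore $\sum_{k\in\Z}c_k\widehat{T_k\phi}=G\,\widehat{\phi}$, which is (i).

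For (ii) I would use that every $f\in\overline{{\rm span}}\{T_k\phi\}_{k\in\Z}$ has $\widehat{f}=m\,\widehat{\phi}$ for a $1$-periodic $m$. The quickest route is approximation: for a finite combination $f=\sum_l c_lT_l\phi$ one has $\widehat{f}=m\,\widehat{\phi}$ with $m(\ga)=\sum_l c_le^{-i2\pi l\ga}$ a trigonometric polynomial, whence by $1$-periodicity $G(\ga)=m(\ga)\Psi(\ga)$, where $\Psi(\ga):=\sum_n\widehat{\phi}(\ga+n)\overline{\widehat{\tilde\phi}(\ga+n)}$; substituting into (i) gives $\sum_{k\in\Z}c_k\widehat{T_k\phi}=\widehat{\phi}\,G=\widehat{f}\,\Psi$, which is (\ref{E L saca f afuera}). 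I would then pass to a general $f$ by choosing finite combinations $f^{(j)}\to f$ in $\Lebcuad$: the left-hand side converges because the analysis and synthesis operators attached to the Bessel sequences $\{T_k\tilde\phi\}_{k\in\Z}$ and $\{T_k\phi\}_{k\in\Z}$ are bounded, while the right-hand side converges because $\Psi\in L^{\infty}(0,1)$ by (\ref{E cota superior BB}), so multiplication by its $1$-periodic extension is bounded on $\Lebcuad$. Both limits then give (\ref{E L saca f afuera}) for $f$.

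The main obstacle is the bookkeeping in (i): justifying the periodization of $\int_{\R}\widehat{f}\,e^{i2\pi k\ga}\overline{\widehat{\tilde\phi}}\,\ud\ga$ (which rests on $\widehat{f}\,\overline{\widehat{\tilde\phi}}\in L^1(\R)$) and then correctly matching the discrete-time transform $H=\sum_{k\in\Z}c_kE_{-k}$ produced by \cite[Lemma 9.2.2]{Christensen (2016)} against the Fourier series of $G$, keeping careful track of the sign of the index. Once $H=G$ is established, part (ii) is essentially formal, the only quantitative input being the uniform bound on $\Psi$ furnished by (\ref{E cota superior BB}), which is what legitimizes passing to the limit in the generating combinations.
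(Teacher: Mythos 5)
Your proposal is correct and takes essentially the same approach as the paper: the identical Cauchy--Schwarz/Theorem~\ref{T Benedetto} argument for the $L^2(0,1)$ membership in (i), the identical periodicity computation for finite linear combinations in (ii), and the identical density argument using boundedness of $f\mapsto\sum_{k\in\Z}\langle f,T_k\tilde{\phi}\rangle T_k\phi$ together with the $L^\infty(0,1)$ bound on $\sum_{n\in\Z}\widehat{\phi}(\cdot+n)\overline{\widehat{\tilde{\phi}}(\cdot+n)}$. The only divergence is that the paper obtains the equality in (i) by citing \cite[Equality (12)]{Christensen-Eldar (2004)}, whereas you prove it from scratch by periodizing the inner products and matching the Fourier coefficients of the periodization against the function $H$ supplied by \cite[Lemma 9.2.2]{Christensen (2016)}; this is a correct, self-contained replacement for that citation.
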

\begin{proof}
By the Cauchy-Schwarz inequality and Theorem~\ref{T Benedetto}(i),
if $f\in\Lebcuad$ then

\centerline{$\int_{0}^{1}|[\,\widehat{f},\widehat{\widetilde{\phi}}\,](\ga)|^{2}d\ga
\leq \widetilde{\beta} \int_{0}^{1}\sum_{n\in\Z}
|\widehat{f}(\ga+n)|^2d\ga\leq \widetilde{\beta} ||f||^{2},$}
\noindent where $\widetilde{\beta}$ is an upper frame bound of
$\{T_k\widetilde{\phi}\}_{k\in\Z}$. So the equality in (i) -- which
coincides with \cite[Equality (12)]{Christensen-Eldar (2004)} --
holds.

From (i), if $f=\sum_{k\in\Z} c_k T_k\phi$, where
$\{c_k\}_{k\in\Z}\in \ell^2(\Z) $ has a finite number of nonzero
elements,
\begin{align*}
\sum_{k\in\Z}\langle\, \widehat{f}, \widehat{T_k\widetilde{\phi}}
\,\rangle \widehat{T_k\phi} (\ga) &= \widehat{\phi}(\ga)
\sum_{n\in\Z}\left(\sum_{k\in\Z} c_k e^{-i2\pi k \ga}\right)
\widehat{\phi}(\ga+n)\overline{\widehat{\widetilde{\phi}}(\ga+n)}\\
&=\left(\sum_{k\in\Z} c_k e^{-i2\pi k \ga}\right)
\widehat{\phi}(\ga)\,[\,\widehat{\phi},\widehat{\widetilde{\phi}}\,](\ga)=\widehat{f}(\ga)\,[\,\widehat{\phi},\widehat{\widetilde{\phi}}\,](\ga).
\end{align*}
It follows that (\ref{E L saca f afuera}) holds for $ f\in
\mathrm{{\rm span}}\{T_k\phi\}_{k\in\Z}$. Since the operator $f
\mapsto \sum_{k\in\Z}\langle\, f, T_k\widetilde{\phi} \,\rangle
T_k\phi$ is continuous and by (\ref{E cota superior BB})
$[\,\widehat{\phi},\widehat{\widetilde{\phi}}\,]\in
L^{\infty}(\mathbb{R})$, (\ref{E L saca f afuera}) holds for
$f\in\overline{\mathrm{{\rm span}}}\{T_k\phi\}_{k\in\Z}$.
\end{proof}

The key result of this section is the following:

\begin{thm}\label{caractdualestrasla}
Let $\phi, \widetilde{\phi}\in\Lebcuad$ be such that
$\{T_k\phi\}_{k\in\Z}$ and $\{T_k\widetilde{\phi}\}_{k\in\Z}$ are
Bessel sequences. Let $\Wsb =\overline{\mathrm{{\rm
span}}}\{T_k\phi\}_{k\in\Z}$ and $\Vsb =\overline{{\rm
span}}\{T_k\widetilde{\phi}\}_{k\in\Z}$. Then, given $\e\geq0 $, the
following statements are equivalent:
\begin{itemize}
\item[(i)]
$\| f - \sum_{k\in\Z}\langle f, T_k\widetilde{\phi} \rangle T_k\phi
\|\leq\e\|f\| $, for $f\in\Wsb$.
\item[(ii)] $||f - \sum_{k\in\Z}\langle f, T_k P_{\Wsb}
\widetilde{\phi} \rangle T_k\phi || \leq\e\|f\| $, for $f\in\Wsb$.
\item[(iii)]
$\abs{[\,\widehat{\phi},\widehat{\widetilde{\phi}}\,](\ga) - 1
}\leq\e $ for a.e. $\ga\in N(\Phi)^{c}$.
\end{itemize}
Moreover, if $\e < 1 $, then statements (i) to (iii) are also
equivalent to:
\begin{itemize}
\item[(iv)]$\{T_k\phi\}_{k\in\Z}$ and $\{T_k P_{\Wsb}\widetilde{\phi}
\}_{k\in\Z}$ are $\e$-approximate dual frames in $\Wsb$.
\end{itemize}
\end{thm}
\begin{proof}
(i) $\Leftrightarrow$ (ii): The operators $ P_{\Wsb}$ and $ T_k $
commute. So, given $f\in\Wsb$,

\centerline{$||f - \sum_{k\in\Z}\langle f, T_k P_{\Wsb}
\widetilde{\phi} \rangle T_k\phi ||=|| f - \sum_{k\in\Z}\langle f,
T_k\widetilde{\phi} \rangle T_k\phi ||.$ }

(i) $\Rightarrow$ (iii): Assume that (iii) does not hold. Then there
exists $E\subseteq N(\Phi)^{c}\cap [0,1)$ such that $|E|>0 $ and

\centerline{$|[\,\widehat{\phi},\widehat{\widetilde{\phi}}\,](\ga) -
1 |>\e $ for $\ga\in E$.}

\noindent We will see that there exists $f\in\Wsb$ such that

\centerline{$||f - \sum_{k\in\Z}\langle f, T_k\widetilde{\phi}
\rangle T_k\phi ||>\e ||f|| $.}

We can write $E = \bigcup_{ k\in\N } E_k $, where

\centerline{$ E_k =\left\{\ga\in N(\Phi)^{c}\cap [0,1): |
[\,\widehat{\phi},\widehat{\widetilde{\phi}}\,](\ga) - 1 |
\geq\frac{1}{k} + \e\right\}$}

\noindent for $ k\in\N $. If $ | E_k | = 0 $ for all $ k $, then $
|E| = 0 $, which is a contradiction. So, there exists $\e'
>\e $ and $ E' \subseteq E $ such that $ | E' |> 0 $ and
\begin{equation}\label{condcontrarecip}
|[\,\widehat{\phi},\widehat{\widetilde{\phi}}\,](\ga) - 1 |\geq\e'
\textrm{  for  }\ga\in E'.
\end{equation}
\noindent Let $E'_{p}=\bigcup_{k\in\Z}(E'+k)$ and $F \in
\mathcal{C}_{{\rm per}}^{2}$ such that $\textrm{supp}(F) \subseteq
E'_{p}$ and $\abs{\textrm{supp}(F)} >0$. Let $f\in\Wsb$ that
verifies $\widehat{f} = F\widehat{\phi}$. Then $f\neq0$ and by
Lemma~\ref{L saca f afuera}(ii) and (\ref{condcontrarecip}),
\begin{align*}
|| f - \sum_{k\in\Z}\langle f, T_k\widetilde{\phi} \rangle T_k\phi
||^2 =&\left\|\widehat{f}\left(1 -
[\,\widehat{\phi},\widehat{\widetilde{\phi}}\,]\right)\right\|^2=\left\|\chi_{E'_{p}}\widehat{ f }\left(1 - [\,\widehat{\phi},\widehat{\widetilde{\phi}}\,]\right)\right\|^{2} \\
\geq& (\e')^2 ||\chi_{E'_{p}}\widehat{ f } ||^2 = (\e')^2 ||
\widehat{f} ||^2>\e^2 ||f||^2,
\end{align*}
which contradicts (i).

(iii) $\Rightarrow$ (i): Let $f=\sum_{k\in\Z} c_k T_k\phi$, where
$\{c_k\}_{k\in\Z}\in \ell^2(\Z) $ has a finite number of nonzero
elements. Note that if $\Phi(\ga) = 0 $ then $\widehat{f}(\ga) = 0$.
By Lemma~\ref{L saca f afuera}(ii) and (iii), we have that

\centerline{$||f - \sum_{k\in\Z}\langle f, T_k\widetilde{\phi}
\rangle T_k\phi || =\left\| \chi_{ N(\Phi)^{c} }\widehat{f}
\left(1-[\,\widehat{\phi},\widehat{\widetilde{\phi}}\,]\right)\right\|
\leq \e \| \widehat{f} \| = \e \|f\|.$} So, as in the proof of
Lemma~\ref{L saca f afuera}(ii), (i) holds for $f\in
\overline{\mathrm{{\rm span}}}\{T_k\phi\}_{k\in\Z}$.

If $\e < 1 $, (ii) $\Leftrightarrow$ (iv) follows from
Remark~\ref{Obs definicion de mdoa}(iii).
\end{proof}
\begin{rem}\label{obsteor331}
If $\e < 1 $, from condition (iii),
$[\,\widehat{\phi},\widehat{\widetilde{\phi}}\,](\ga) \neq 0 $ for
a.e. $\ga\in N(\Phi)^{c}$. So $N(\widetilde{\Phi}) \subseteq
N(\Phi)$ a.e..
\end{rem}
Let $\{T_k\phi\}_{k\in\Z}$ be a Bessel sequence,
$\mathcal{W}=\overline{{\rm span}} \{T_k\phi\}_{k\in\Z}$ and $\phi_1
\in\Lebcuad$. The theorem below gives conditions so that there
exists $\widetilde{\phi}\in\overline{{\rm
span}}\{T_k\phi_1\}_{k\in\Z}$ whose translates allow approximate
reconstruction in $\mathcal{W}$. Furthermore, it yields a result
about approximate dual frames in shift-invariant subspaces.
\begin{thm}\label{caractdualestraslaH}
Let $\phi, \phi_1\in\Lebcuad$ be such that $\{T_k\phi\}_{k\in\Z}$
and $\{T_k\phi_{1}\}_{k\in\Z}$ are Bessel sequences. Let $\Wsb
=\overline{{\rm span}} \{T_k\phi\}_{k\in\Z}$ and $\Vsb
=\overline{{\rm span}}\{T_k\phi_1\}_{k\in\Z}$. Let
$\widetilde{\phi_1} \in\Wsb$ and $\widetilde{\phi}\in\Vsb $ such
that $\widehat{\widetilde{\phi}_1}
=\overline{\widetilde{H}}\widehat{\phi}$ and $\widehat{\widetilde{
\phi}} = \widetilde{H}\widehat{ \phi_1}$ where $\widetilde{H} \in
\mathcal{C}_{{\rm per}}^{\infty}$. Assume $\e\geq0 $. Then
$\{T_k\widetilde{\phi}\}_{k\in\Z}$ and
$\{T_k\widetilde{\phi}_1\}_{k\in\Z}$ are Bessel sequences and the
following are equivalent:
\begin{enumerate}
\item[(i)]
$\| f - \sum_{k\in\Z}\langle f, T_k\widetilde{\phi} \rangle T_k\phi
\| \leq \e \|f\| $ for $f\in\Wsb$.

\item[(ii)]
$\left|\overline{\widetilde{H}}(\gamma)
[\,\widehat{\phi},\widehat{\phi_{1}}\,](\ga) - 1\right| \leq \e$ for
a.e. $\ga\in N(\Phi)^{c}$.
\end{enumerate}
Moreover, if (ii) holds with  $\e < 1 $ and $N(\Phi_{1})=N(\Phi)$,
then $\{T_k\phi\}_{k\in\Z}$ and $\{T_k
P_{\Wsb}\widetilde{\phi}\}_{k\in\Z}$ are $\e$-approximate dual
frames in $\Wsb$, whereas $\{T_k\phi_1\}_{k\in\Z}$ and $\{T_k
P_{\Vsb } \widetilde{\phi}_1\}_{k\in\Z}$ are $\e$-approximate dual
frames in $\Vsb$.
\end{thm}
\begin{proof}
Observe that

\centerline{$\widetilde{\Phi}(\ga)=\sum_{k\in\Z}|\widetilde{H}(\gamma)
\widehat{\phi_1}(\gamma +
k)|^2=|\widetilde{H}(\gamma)|^{2}\Phi_1(\ga)\leq
||\widetilde{H}||_{L^\infty(0,1)}^2 \beta_1,$}

\noindent where $\beta_{1}$ is the Bessel bound of
$\{T_k\phi_{1}\}_{k\in\Z}$. So, by Theorem~\ref{T Benedetto}(i),
$\{T_k\widetilde{\phi}\}_{k\in\Z}$ is a Bessel sequence. Analogously
it can be seen that $\{T_k\widetilde{\phi}_1\}_{k\in\Z}$ is a Bessel
sequence.

Since $\abs{[\,\widehat{\phi},\widehat{\widetilde{\phi}}\,](\ga) - 1
}=\left|\overline{\widetilde{H}}(\gamma)
[\,\widehat{\phi},\widehat{\phi_1}\,](\ga) - 1\right|$, (i)
$\Leftrightarrow$ (ii) follows from
Theorem~\ref{caractdualestrasla}.

The proof of the last part is similar to the proof of (iii)
$\Rightarrow$ (i) of Theorem~\ref{caractdualestrasla}.
\end{proof}

With $\e = 0 $, from Theorem~\ref{caractdualestrasla} we obtain
\cite[Theorem 4.1.]{Christensen-Eldar (2004)} and from
Theorem~\ref{caractdualestraslaH} we obtain \cite[Theorem
4.3.]{Christensen-Eldar (2004)}.

\section{Approximate oblique dual frames in shift-invariant spaces}

In this section we study the concept of approximate oblique duality
when  $\mathcal{H}= \Lebcuad$ and $\Wsb$ and $\Vsb$ are
shift-invariant subspaces of $\Lebcuad$.

The following corollaries give conditions on the generators of two
shift-invariant subspaces for approximate oblique duality. The first
is a consequence of Theorem~\ref{caractdualestrasla} and
Proposition~\ref{P aprox subesp entonces aodf}.
\begin{cor}\label{C cond suf doa}
 Let $\Lebcuad =\Wsb \oplus\Vsb^{\perp}$. Let $\phi, \widetilde{\phi}\in\Lebcuad$ be such that
$\{T_k\phi\}_{k\in\Z}$ and $\{T_k\widetilde{\phi}\}_{k\in\Z}$ are
frames for $\Wsb$ and $\Vsb$ respectively. Assume $\e\geq0 $. If

\centerline{
$\abs{[\,\widehat{\phi},\widehat{\widetilde{\phi}}\,](\ga) - 1} \leq
\frac{\e}{\norm{\pi_{\Wsb\Vsb^{\perp}}}}$ for a.e. $\ga\in
N(\Phi)^{c}$}

\noindent then $\{T_k\phi\}_{k\in\Z}$ and
$\{T_k\widetilde{\phi}\}_{k\in\Z}$ are \emph{$\e $}-approximate
oblique dual frames.
\end{cor}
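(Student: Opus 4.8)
The plan is to obtain the conclusion by chaining the two cited results, the only genuine content being the bookkeeping of the constant $\norm{\pi_{\Wsb \Vsb^{\perp}}}$. First I would set $\dl = \e / \norm{\pi_{\Wsb \Vsb^{\perp}}}$ and observe that the hypothesis is precisely condition (ii) of Theorem~\ref{caractdualestrasla} with $\e$ there replaced by $\dl$; that is, $\abs{ \sum_{n\in\Z} \widehat{\phi}(\ga+n)\overline{\widehat{\tilde{\phi}}(\ga+n)} - 1 } \leq \dl$ for a.e. $\ga \in \{ \Phi \neq 0 \}$. Note in passing that this division is meaningful: since $\norm{\pi_{\Wsb \Vsb^{\perp}}} \geq 1$, the hypothesis is a genuine (indeed tighter) smallness condition.

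Next I would invoke the implication (ii) $\Rightarrow$ (i) of Theorem~\ref{caractdualestrasla}, applied with $\dl$ in place of $\e$, which yields $\norm{ f - \sum_{k\in\Z} \langle f, T_k \tilde{\phi} \rangle T_k \phi } \leq \dl \norm{f}$ for every $f \in \Wsb$. This is exactly the reconstruction estimate (\ref{e006}) with relative error $\dl$, where $\{T_k \phi\}_{k\in\Z}$ plays the role of the frame $\{f_k\}$ and $\{T_k \tilde{\phi}\}_{k\in\Z}$ the role of $\{g_k\}$. By hypothesis these are frames for $\Wsb$ and $\Vsb$ respectively, so the standing assumptions of Proposition~\ref{P aprox subesp entonces aodf} are satisfied; the relabeling of the index set from $\N$ to $\Z$ via any bijection is immaterial.

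Finally I would apply Proposition~\ref{P aprox subesp entonces aodf}: since (\ref{e006}) holds with relative error $\dl$, the two frames are $\dl\,\norm{\pi_{\Wsb \Vsb^{\perp}}}$-approximate oblique dual frames. The key point is that the constant cancels, $\dl\,\norm{\pi_{\Wsb \Vsb^{\perp}}} = ( \e / \norm{\pi_{\Wsb \Vsb^{\perp}}} )\norm{\pi_{\Wsb \Vsb^{\perp}}} = \e$, so $\{T_k \phi\}_{k\in\Z}$ and $\{T_k \tilde{\phi}\}_{k\in\Z}$ are $\e$-approximate oblique dual frames, as claimed.

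I do not anticipate any real obstacle here: the statement is a direct composition of the two earlier results, and the whole design of the hypothesis is that dividing by $\norm{\pi_{\Wsb \Vsb^{\perp}}}$ is exactly what is needed to absorb the factor $\norm{\pi_{\Wsb \Vsb^{\perp}}}$ that Proposition~\ref{P aprox subesp entonces aodf} introduces when passing from a subspace reconstruction estimate to an oblique-duality bound. The only care required is to keep the two occurrences of $\e$ (the one in the theorem and the one in the proposition) notationally distinct, which is why I introduce the auxiliary $\dl$.
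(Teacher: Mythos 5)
Your proof is correct and follows exactly the route the paper indicates: the paper gives no separate argument for this corollary, stating only that it ``is a consequence of Theorem~\ref{caractdualestrasla} and Proposition~\ref{P aprox subesp entonces aodf}'', which is precisely your composition with $\dl = \e/\norm{\pi_{\Wsb\Vsb^{\perp}}}$ feeding the implication (ii) $\Rightarrow$ (i) of the theorem into the proposition so that the factor $\norm{\pi_{\Wsb\Vsb^{\perp}}}$ cancels.
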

Let $\phi, \phi_1\in\Lebcuad$ be such that $\{T_k\phi\}_{k\in\Z}$
and $\{T_k\phi_1\}_{k\in\Z}$ are frames for $\Wsb$ and $\Vsb$
respectively. For what follows we recall that, by
Proposition~\ref{cond.L2.en.suma.dir}, if $\Lebcuad =\Wsb
\oplus\Vsb^{\perp}$ then $N(\Phi)=N(\Phi_{1})$.
\begin{cor}\label{C cond suf doaH}
 Let $\Lebcuad =\Wsb \oplus\Vsb^{\perp}$. Let $\phi,
\phi_1\in\Lebcuad$ be such that $\{T_k\phi\}_{k\in\Z}$ and
$\{T_k\phi_1\}_{k\in\Z}$ are frames for $\Wsb$ and $\Vsb$
respectively. Let $\widetilde{\phi}\in\Vsb $ be such that
$\widehat{\widetilde{ \phi}} = \widetilde{H}\widehat{\phi_1}$ where
$\widetilde{H} \in \mathcal{C}_{{\rm per}}^{2}$. Assume $0 \leq \e <
1$. If

\centerline{ $\left|\overline{\widetilde{H}}(\gamma)
[\,\widehat{\phi},\widehat{\phi_{1}}\,](\ga) - 1\right| \leq
\frac{\e}{\norm{\pi_{\Wsb\Vsb^{\perp}}}}$ for a.e. $\ga\in
N(\Phi)^{c}$ }

\noindent then $\{T_k\widetilde{\phi}\}_{k\in\Z}$ is a frame for
$\Vsb$ such that $\{T_k\phi\}_{k\in\Z}$ and
$\{T_k\widetilde{\phi}\}_{k\in\Z}$ are \emph{$\e $}-approximate
oblique dual frames.
\end{cor}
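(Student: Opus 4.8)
The plan is to reduce the corollary to Theorem~\ref{caractdualestraslaH} followed by Proposition~\ref{P aprox subesp entonces aodf}, the extra work being to show that the merely $L^2$ symbol $\tilde{H}$ actually produces a genuine frame $\{T_k\tilde{\phi}\}_{k\in\Z}$ for $\Vsb$. Write $G(\ga)=\sum_{n\in\Z}\widehat{\phi}(\ga+n)\overline{\widehat{\phi_1}(\ga+n)}$ and let $\alpha,\beta$ and $\alpha_1,\beta_1$ denote frame bounds for $\{T_k\phi\}_{k\in\Z}$ and $\{T_k\phi_1\}_{k\in\Z}$. By (\ref{E cota superior BB}) one has $|G(\ga)|\leq\sqrt{\beta\beta_1}$ a.e. Since we assume $\Lebcuad=\Wsb\oplus\Vsb^{\perp}$, equivalently $\Lebcuad=\Vsb\oplus\Wsb^{\perp}$ by Theorem~\ref{descompdH}, Proposition~\ref{cond.L2.en.suma.dir} applied to $\phi$ and $\phi_1$ gives $\{\Phi=0\}=\{\Phi_1=0\}$ and a constant $c>0$ with $|G(\ga)|\geq c$ a.e. on $\{\Phi\neq0\}$.

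First I would extract pointwise bounds on $\tilde{H}$. Because $\norm{\pi_{\Wsb\Vsb^{\perp}}}\geq1$, the hypothesis gives $|\overline{\tilde{H}}(\ga)G(\ga)-1|\leq\e/\norm{\pi_{\Wsb\Vsb^{\perp}}}\leq\e<1$ a.e. on $\{\Phi\neq0\}$, so $1-\e\leq|\tilde{H}(\ga)G(\ga)|\leq1+\e$ there. Combined with $c\leq|G(\ga)|\leq\sqrt{\beta\beta_1}$ this yields
\[
\frac{1-\e}{\sqrt{\beta\beta_1}}\leq|\tilde{H}(\ga)|\leq\frac{1+\e}{c}\qquad\text{a.e. on }\{\Phi\neq0\}=\{\Phi_1\neq0\}.
\]
In particular $\tilde{H}$ is essentially bounded and nonzero on $\{\Phi_1\neq0\}$.

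Next I would verify the frame claim. Since $\widehat{\tilde{\phi}}=\tilde{H}\widehat{\phi_1}$ we have $\tilde{\Phi}=|\tilde{H}|^2\Phi_1$; on $\{\Phi_1=0\}$ the function $\widehat{\phi_1}$ vanishes, hence $\tilde{\Phi}=0$, while on $\{\Phi_1\neq0\}$ the bounds above together with $\alpha_1\leq\Phi_1\leq\beta_1$ give $\frac{(1-\e)^2\alpha_1}{\beta\beta_1}\leq\tilde{\Phi}\leq\frac{(1+\e)^2\beta_1}{c^2}$. Thus $\{\tilde{\Phi}\neq0\}=\{\Phi_1\neq0\}$ and $\tilde{\Phi}$ is bounded above and below by positive constants on this set, so $\{T_k\tilde{\phi}\}_{k\in\Z}$ is a frame sequence by Theorem~\ref{T Benedetto}(ii); since $\tilde{\phi}\in\Vsb$ and $\{\tilde{\Phi}\neq0\}=\{\Phi_1\neq0\}$, its closed span is all of $\Vsb$, so it is a frame for $\Vsb$.

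Finally, as $\widehat{\phi_1}$ vanishes off $\{\Phi_1\neq0\}$, replacing $\tilde{H}$ by $\tilde{H}\chi_{\{\Phi_1\neq0\}}$ leaves $\tilde{\phi}$ unchanged and gives a representative in $L^\infty(0,1)$, so Theorem~\ref{caractdualestraslaH} applies with $\e$ replaced by $\e/\norm{\pi_{\Wsb\Vsb^{\perp}}}$; its condition (2) is precisely the hypothesis, so condition (1) yields $\norm{f-\sum_{k\in\Z}\langle f,T_k\tilde{\phi}\rangle T_k\phi}\leq(\e/\norm{\pi_{\Wsb\Vsb^{\perp}}})\norm{f}$ for $f\in\Wsb$. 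This is exactly (\ref{e006}) with error $\e/\norm{\pi_{\Wsb\Vsb^{\perp}}}$, and Proposition~\ref{P aprox subesp entonces aodf} then gives that $\{T_k\phi\}_{k\in\Z}$ and $\{T_k\tilde{\phi}\}_{k\in\Z}$ are $(\e/\norm{\pi_{\Wsb\Vsb^{\perp}}})\norm{\pi_{\Wsb\Vsb^{\perp}}}=\e$-approximate oblique dual frames. The step I expect to be the main obstacle is establishing that $\{T_k\tilde{\phi}\}_{k\in\Z}$ is a frame for $\Vsb$ and not merely a Bessel sequence: this is exactly what forces the use of the lower bound $|G|\geq c$ coming from the direct-sum hypothesis via Proposition~\ref{cond.L2.en.suma.dir}, without which an $L^2$ symbol $\tilde{H}$ need not be essentially bounded nor give a positive lower frame bound.
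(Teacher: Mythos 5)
Your proof is correct, and its skeleton is the same as the paper's: Proposition~\ref{cond.L2.en.suma.dir} (legitimized by Theorem~\ref{descompdH}, as you note) supplies $\{\Phi=0\}=\{\Phi_1=0\}$ and the bounds $c\leq|G|\leq\sqrt{\beta\beta_1}$, the hypothesis then controls $\tilde H$ on $\{\Phi\neq 0\}$, Theorem~\ref{T Benedetto} handles the Bessel/frame issue for $\{T_k\tilde\phi\}_{k\in\Z}$, and the conclusion follows from Theorem~\ref{caractdualestraslaH} combined with Proposition~\ref{P aprox subesp entonces aodf}. You deviate in two details, both of which make the argument more complete than the printed one. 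First, for the frame claim the paper establishes only the upper bound $\tilde\Phi=|\tilde H|^2\Phi_1\leq \frac{1}{c^2}\bigl(1+\frac{\e}{\norm{\pi_{\Wsb\Vsb^{\perp}}}}\bigr)^2\beta_1$, i.e.\ Besselness, and then upgrades to a frame via Remark~\ref{Obs definicion de mdoa}(iii) (Bessel sequences satisfying the approximate-duality estimate with constant $\e<1$ are automatically frames); you instead also derive the lower bound $|\tilde H|\geq(1-\e)/\sqrt{\beta\beta_1}$ and prove frameness directly from Theorem~\ref{T Benedetto}(ii) plus a span argument. That is fine, but be aware your span step needs more than the set equality $\{\tilde\Phi\neq0\}=\{\Phi_1\neq0\}$: one must write, for $\widehat f=G\widehat{\phi_1}\in\Vsb$, $\widehat f=(G/\tilde H)\widehat{\tilde\phi}$ with $G/\tilde H\in L^2(0,1)$, and it is exactly your essential lower bound on $|\tilde H|$ that makes this quotient square-integrable; state that explicitly. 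Second, you repair a point the paper glosses over: Theorem~\ref{caractdualestraslaH} is stated for $\tilde H\in L^\infty(0,1)$, while the corollary assumes only $\tilde H\in L^2(0,1)$; your replacement of $\tilde H$ by $\tilde H\chi_{\{\Phi_1\neq0\}}$ (harmless, since $\widehat{\phi_1}$ vanishes a.e.\ where $\Phi_1$ does, and the hypothesis only constrains $\tilde H$ on $\{\Phi\neq0\}=\{\Phi_1\neq0\}$) produces a bounded representative so that the theorem genuinely applies. In short: same route, with the frame property obtained by a direct two-sided estimate rather than by the abstract Remark~\ref{Obs definicion de mdoa}(iii), at the cost of a little extra bookkeeping and the benefit of not needing that remark at all.
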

\begin{proof}
By Proposition~\ref{cond.L2.en.suma.dir}, there exists $c
>0$ such that

\centerline{ $\left|\widetilde{H}(\gamma)\right| \leq
\frac{1}{|[\,\widehat{\phi},\widehat{\phi_{1}}\,](\ga)|}\left(1+\frac{\e}{\norm{\pi_{\Wsb\Vsb^{\perp}}}}\right)
\leq
\frac{1}{c}\left(1+\frac{\e}{\norm{\pi_{\Wsb\Vsb^{\perp}}}}\right).$}
\noindent for a.e. $\ga\in N(\Phi)^{c}$. So, by Theorem~\ref{T
Benedetto}(i),

\centerline{$\widetilde{\Phi}(\ga)=|\widetilde{H}(\gamma)|^2\Phi_1(\ga)\leq
\frac{1}{c^2}\left(1+\frac{\e}{\norm{\pi_{\Wsb\Vsb^{\perp}}}}\right)^2
\beta_1,$}

\noindent where $\beta_{1}$ is an upper frame bound of $\{
T_k\phi_{1}\}_{k\in\Z}$. Applying again Theorem~\ref{T Benedetto}(i)
we conclude that $\{T_k\widetilde{\phi}\}_{k\in\Z}$ is a Bessel
sequence.

Now the conclusion follows from Theorem~\ref{caractdualestraslaH},
Proposition~\ref{P aprox subesp entonces aodf} and Remark~\ref{Obs
definicion de mdoa}(iii).
\end{proof}
The following theorem gives an expression for the Fourier transform
of the oblique projection when the subspaces are shift-invariant, in
terms of the corresponding generators.
\begin{thm}\label{caractproyoblic} Let $\Lebcuad =\Wsb \oplus\Vsb^{\perp}$. Let $\phi, \phi_1\in\Lebcuad$ be such that $\{T_k\phi\}_{k\in\Z}$ and
$\{T_k\phi_1\}_{k\in\Z}$ are frames for $\Wsb$ and $\Vsb$
respectively. Then
$$\widehat{\pi_{\Wsb\Vsb^{\perp}}f }(\ga) =\left\{
\begin{array}{cc}
\frac{[\,\widehat{f},\widehat{\phi_{1}}\,](\ga)}{
[\,\widehat{\phi},\widehat{\phi_{1}}\,](\ga)}
\widehat{\phi}(\ga) &\ga\in N(\Phi)^{c} \\
0 &\ga\in N(\Phi)
\end{array}
\right.
$$
for $f\in\Lebcuad$.
\end{thm}
\begin{proof}
Let $f\in\Lebcuad$. Since $\{T_k\phi\}_{k\in\Z}$ is a frame for
$\Wsb$ there exists $\tau \in \mathcal{C}_{{\rm per}}^{2}$ such that
$\widehat{\pi_{\Wsb\Vsb^{\perp}}f} = \tau \widehat{\phi}.$ So, if
$\ga\in  N(\Phi)$,
 $\widehat{ \pi_{\Wsb\Vsb^{\perp}}f }(\ga) =0.$ By \cite[Lemma 2.8]{de Boor-DeVore-Ron (1994)},
$$[\,\widehat{f},\widehat{\phi_{1}}\,](\ga)=[\,\widehat{\pi_{\Wsb\Vsb^{\perp}}f},\widehat{\phi_{1}}\,](\ga)=\tau(\ga)[\,\widehat{\phi},\widehat{\phi_{1}}\,](\ga).$$
If $\ga\in N(\Phi)^{c}$, by Proposition~\ref{cond.L2.en.suma.dir},
$[\,\widehat{\phi},\widehat{\phi_{1}}\,](\ga) \neq 0$, hence
$\tau(\ga) = \frac{[\,\widehat{f},\widehat{\phi_{1}}\,](\ga)}{
[\,\widehat{\phi},\widehat{\phi_{1}}\,](\ga)}$ and $\widehat{\pi_{
\Wsb\Vsb^{\perp}}f}(\ga) =
\frac{[\,\widehat{f},\widehat{\phi_{1}}\,](\ga)}{
[\,\widehat{\phi},\widehat{\phi_{1}}\,](\ga)}\widehat{\phi}(\ga)$.\qedhere
\end{proof}
If $\Wsb =\Vsb$ the previous theorem reduces to \cite[Theorem
2.9]{de Boor-DeVore-Ron (1994)}. Using the expression for the
Fourier transform of the oblique projection given in
Theorem~\ref{caractproyoblic}, we obtain the following sufficient
condition for approximate duality which is different to the one of
Corollary~\ref{C cond suf doaH}. If $\{T_k\phi\}_{k\in\Z}$ and
$\{T_k\phi_1\}_{k\in\Z}$ are Riesz bases for $\Wsb$ and $\Vsb$
respectively, we have an equality in (\ref{E cos marco})
\cite{Unser-Aldroubi (1994)}. So, in this case the sufficient
condition of the next theorem is weaker than the one of
Corollary~\ref{C cond suf doaH}.
\begin{thm}\label{condsufdoaparamtras}
 Let $\Lebcuad =\Wsb \oplus\Vsb^{\perp}$. Let $\phi,
\phi_1\in\Lebcuad$ be such that $\{T_k\phi\}_{k\in\Z}$ and
$\{T_k\phi_1\}_{k\in\Z}$ are frames for $\Wsb$ and $\Vsb$
respectively. Let $0 \leq \e < 1$. If $\widetilde{\phi}\in\Vsb $
verifies $\widehat{\widetilde{\phi}} =
\widetilde{H}\widehat{\phi_1},$ where $\widetilde{H} \in
\mathcal{C}_{{\rm per}}^{2}$ is such that
\begin{equation}\label{condpH}
\left|\overline{\widetilde{H}}(\ga)[\,\widehat{\phi},\widehat{\phi_{1}}\,](\ga)
- 1\right| \leq
\e\frac{\left|[\,\widehat{\phi},\widehat{\phi_{1}}\,](\ga)\right|}{
\sqrt{\Phi(\ga)\Phi_1(\ga)}}
\end{equation}
for a.e. $\ga\in N(\Phi)^{c}$, then $\{T_k\phi\}_{k\in\Z}$ and
$\{T_k\widetilde{\phi}\}_{k\in\Z}$ are $\e $-approximate oblique
dual frames. \qedhere
\end{thm}
\begin{proof}
Let $f\in\Lebcuad$. Using Theorem~\ref{caractproyoblic} and
Lemma~\ref{L saca f afuera}(i), we obtain
\begin{align*}
\|\pi_{\Wsb\Vsb^{\perp}}f - \sum_{k\in\Z}\langle f,
T_k\widetilde{\phi}\rangle
T_k\phi\|^2&=\|\widehat{\pi_{\Wsb\Vsb^{\perp}}f} -
\sum_{k\in\Z}\langle
\widehat{f},\widehat{T_{k}\widetilde{\phi}}\rangle\widehat{T_{k}\phi} \|^2\\
&=
\int_{N(\Phi)^{c}}\left|\frac{[\,\widehat{f},\widehat{\phi_{1}}\,](\ga)}{[\,\widehat{\phi},\widehat{\phi_{1}}\,](\ga)}
\widehat{\phi}(\ga)-\widehat{\phi}(\ga)[\,\widehat{f},\widehat{\widetilde{
\phi}}\,](\ga)
\right|^2 \mathrm{d}\ga\\
&= \int_{N(\Phi)^{c}}\left|
[\,\widehat{f},\widehat{\phi_{1}}\,](\ga)\widehat{\phi}(\ga)\left(\frac{1}{[\,\widehat{\phi},\widehat{\phi_{1}}\,](\ga)}
-\overline{\widetilde{H}}(\ga)\right)
\right|^2\mathrm{d}\ga\\
&= \int_{N(\Phi)^{c} \cap [0,1)}\Phi(\ga)\left|
[\,\widehat{f},\widehat{\phi_{1}}\,](\ga)\left(\frac{1}{[\,\widehat{\phi},\widehat{\phi_{1}}\,](\ga)}
-\overline{\widetilde{H}}(\ga)\right)
\right|^2\mathrm{d}\ga\\
&\leq \int_{N(\Phi)^{c} \cap [0,1)}
\Phi(\ga)\Phi_{1}(\ga)\sum_{n\in\Z} | \widehat{f}(\ga+n)
|^2\left|\frac{1}{[\,\widehat{\phi},\widehat{\phi_{1}}\,](\ga)}
-\overline{\widetilde{H}}(\ga)
\right|^2\mathrm{d}\ga\\
&\leq \e^2 \int_{0}^{1}\sum_{n\in\Z}|\widehat{f}(\ga+n)|^2
\mathrm{d}\ga = \e^2 \|f\|^2.
\end{align*}
Hence, by Definition~\ref{D marcos duales oblicuos aproximados} and
Remark~\ref{Obs definicion de mdoa}(iii), $\{T_k\phi\}_{k\in\Z}$ and
$\{T_k\widetilde{\phi}\}_{k\in\Z}$ are $\e $-approximate oblique
dual frames.
\end{proof}
\begin{rem}\label{obscondsufdoaparamtras}
Assume that (\ref{condpH}) does not hold. Then there exists
$E\subseteq N(\Phi)^{c} \cap [0,1)$ with $|E|> 0$ and that

\centerline{$\left|\overline{\widetilde{H}}(\ga)[\,\widehat{\phi},\widehat{\phi_{1}}\,](\ga)
- 1\right|>
\e\frac{\left|[\,\widehat{\phi},\widehat{\phi_{1}}\,](\ga)\right|}{\sqrt{\Phi(\ga)\Phi_1(\ga)}}$}

\noindent for $\ga\in E $. Let $\e'>0 $ with $\e' <
\frac{c\e}{\sqrt{ \beta \beta_1}}$ (where $c>0 $ is the constant of
Proposition~\ref{cond.L2.en.suma.dir} and $\beta, \beta_1$ are upper
frame bounds of $\{T_k\phi\}_{k\in\Z}$ and $\{T_k\phi_1\}_{k\in\Z}$,
respectively). Note that $\e' <
\frac{c\e}{\sqrt{\Phi(\ga)\Phi_1(\ga)}} \leq
\frac{c\e}{|[\,\widehat{\phi},\widehat{\phi_{1}}\,](\ga)|} \leq \e
$. Let $E_{p}=\bigcup_{k\in \mathbb{Z}}(E+k)$ and $F \in
\mathcal{C}_{{\rm per}}^{2}$ such that $\textrm{supp}(F)\subseteq
E_{p}$ and $\abs{\textrm{supp}(F)}
>0$. Let $f\in\Lebcuad$ given by $\widehat{f}=F\widehat{\phi}$.
Then $f\in\Wsb$, $f\neq0$ and $\pi_{\Wsb\Vsb^{\perp}}f=f$. Taking
into account Lemma~\ref{L saca f afuera}(ii), we get
\begin{align*}
\|\pi_{\Wsb\Vsb^{\perp}}f - \sum_{k\in\Z}\langle f,
T_k\widetilde{\phi} \rangle T_k\phi \|^2 &=
\int_{N(\Phi)^{c}}\left|\widehat{f}(\ga)\right|^{2}\left|1
-\overline{\widetilde{H}}(\ga)
[\,\widehat{\phi},\widehat{\phi_1}\,](\ga)\right|^2 \mathrm{d}\ga\\
&\geq \e^2 \int_{N(\Phi)^{c}}|\widehat{f}(\ga) |^2
\frac{\left|[\,\widehat{\phi},\widehat{\phi_1}\,](\ga)\right|^2}{\Phi(\ga)
\Phi_1(\ga)}\mathrm{d}\ga \\
&\geq \paren{\frac{c\e}{\sqrt{ \beta \beta_1}}}^{2}
\int_{N(\Phi)^{c}}|\widehat{f}(\ga) |^2 \mathrm{d}\ga > {\e'}^{\,2}
||f||^2.
\end{align*}
This shows that $\{T_k\phi\}_{k\in\Z}$ and
$\{T_k\widetilde{\phi}\}_{k\in\Z}$ are not $\e' $-approximate
oblique dual frames.
\end{rem}
We have the following necessary condition for approximate oblique
duality.
\begin{thm}\label{condnecdoaparamtras}
 Let $\Lebcuad =\Wsb \oplus\Vsb^{\perp}$. Let $\phi,
\phi_1\in\Lebcuad$ be such that $\{T_k\phi\}_{k\in\Z}$ and
$\{T_k\phi_1\}_{k\in\Z}$ are frames for $\Wsb$ and $\Vsb$
respectively. Set $\widetilde{\phi}\in\Vsb $ such that
$\widehat{\widetilde{ \phi}} = \widetilde{H} \widehat{\phi_1}$,
where $\widetilde{H} \in \mathcal{C}_{{\rm per}}^{2}$. Let
$\e\geq0$. If $\{T_k\phi\}_{k\in\Z}$ and
$\{T_k\widetilde{\phi}\}_{k\in\Z}$ are $\e$-approximate oblique dual
frames, then
\begin{equation}\label{condnecpH}
\left|\overline{\widetilde{H}}(\ga)[\,\widehat{\phi},\widehat{\phi_1}\,](\ga)
- 1 \right| \leq \e
\end{equation}
for a.e. $\ga\in N(\Phi)^{c}$. \qedhere
\end{thm}
\begin{proof}
We proceed in a similar way to the proof of
Theorem~\ref{caractdualestrasla}. Assume (\ref{condnecpH}) does not
hold. Then there exists $\e'
>\e$ and $E'\subseteq N(\Phi)^{c} \cap [0,1)$ such that $|E'|>0 $ and

\centerline{$\left|\overline{\widetilde{H}}(\ga)[\,\widehat{\phi},\widehat{\phi_1}\,](\ga)-1\right|\geq\e'$
for $\ga\in E'$.}

Let $E'_{p}=\bigcup_{k\in \mathbb{Z}}(E'+k)$ and $F \in
\mathcal{C}_{{\rm per}}^{2}$ such that $\textrm{supp}(F) \subseteq
E'_{p}$ and $\abs{\textrm{supp}(F)} >0$. Let $f\in \Wsb$ with
$\widehat{f} = F\widehat{\phi}$. In this case,
$\pi_{\Wsb\Vsb^{\perp}}f = f$ and $f\neq0$. Using Lemma~\ref{L saca
f afuera}(ii), we obtain
\begin{align*}
\|\pi_{\Wsb\Vsb^{\perp}}f - \sum_{k\in\Z}\langle f,
T_k\widetilde{\phi}\rangle T_k\phi\|^2 =&
\int_{N(\Phi)^{c}}\left|\widehat{f}(\ga)\right|^{2}\left|1
-\overline{\widetilde{H}}(\ga)
[\,\widehat{\phi},\widehat{\phi_1}\,](\ga)\right|^2 \mathrm{d}\ga\\
\geq& {\e'}^{\,2} \int_{N(\Phi)^{c}}|\widehat{f}(\ga)|^2
\mathrm{d}\ga = {\e'}^{\,2} \int|\widehat{f}(\ga)|^2 \mathrm{d}\ga
>\e^2 ||f||^2.
\end{align*}
This contradicts the fact that $\{T_k\phi\}_{k\in\Z}$ and
$\{T_k\widetilde{\phi}\}_{k\in\Z}$ are $\e$-approximate oblique dual
frames. \qedhere
\end{proof}

\section{Numerical and computational aspects of approximate oblique
dual frames}

In this section we highlight the importance of approximate oblique
dual frames from a numerical and computational point of view,
illustrating our analysis with an example of $B$-splines.

Assume that $\Lebcuad=\Wsb\oplus\Vsb^{\perp}$. Let $\phi,
\phi_1\in\Lebcuad$ be such that $\{T_k\phi\}_{k\in\Z}$ and
$\{T_k\phi_1\}_{k\in\Z}$ are frames for $\Wsb$ and $\Vsb$
respectively. By \cite[Theorem 4.3]{Christensen-Eldar (2004)} and
Proposition~\ref{cond.L2.en.suma.dir}, the unique oblique dual frame
of $\{T_k\phi\}_{k\in\Z}$ in $\Vsb$ is $\{T_k\psi\}_{k\in\Z}$, where
$\psi=\sum_{k\in\Z}c_k T_k\phi_1$ and $\{c_k\}_{k\in\Z}$ are the
Fourier coefficients of

\centerline{$H(\ga)=\frac{1}{[\,\widehat{\phi},\widehat{\phi_1}\,](\ga)}$.}

\noindent In practice, the generator of the oblique dual frame can
only be obtained approximately since the computation of the previous
series and the calculus of the Fourier coefficients are not exact.
So, in fact, in the applications we always have to work with an
approximate oblique dual frame. In the following example we
illustrate this situation. We consider here the effect of the
truncation required for the series that defines $\psi$ when $H$ is
not a trigonometric polynomial. In this case, the used
$\e(K)$-approximate dual frame is $\{T_k\psi_{K}\}_{k\in\Z}$ where
$\psi_{K}=\sum_{|k|\leq K}c_k T_k\phi_1$ for some $K\in \N$. Taking
into account (\ref{condpH}) we have the error

\centerline{$\e(K) = \sup_{\ga\in [0, 1]}\sqrt{\Phi(\ga)
\Phi_1(\ga)}\left|\overline{H}_{K}(\ga) - H(\ga)\right|$}

\noindent where $H_{K}\in \mathcal{C}_{{\rm per}}^{\infty}$ and
restricted to $[0,1)$ is equal to $\sum_{|k|\leq K }c_k e^{-2i\pi
k\cdot}$.

Consider now in particular shift-invariant subspaces generated by
$B$-splines (see \cite[Example 4.5]{Christensen-Eldar (2004)}). We
recall that $B$-splines $B_{n}$ are functions which are piecewise
polynomials. They are defined inductively as

\centerline{$B_{1}(x)=\chi_{\corch{-\frac{1}{2},-\frac{1}{2}}}(x),\,\,B_{n+1}(x)=\int_{-\infty}^{\infty}B_{n}(x-t)B_{1}(t)dt=\int_{-\frac{1}{2}}^{\frac{1}{2}}B_{n}(x-t)dt.$}

\noindent For each $n\geq2$,

\centerline{$B_{n}(x)=\frac{n!}{\paren{n-1}!}\sum_{j=0}^{n}\paren{-1}^{j}\frac{1}{j!\paren{n-j}!}
\paren{x+\frac{n}{2}-j}_{+}^{n-1}$, $x\in \mathbb{R}$,}

\noindent where $f(x)_{+}=\max\set{f(x),0}$ if $f: \mathbb{R}
\rightarrow \mathbb{R}$. $B$-splines have the following properties:
$B_{n}\in C^{n-2}\paren{\mathbb{R}}$ for $n\geq2$, ${\rm
supp}\paren{B_{n}}=\corch{-\frac{n}{2},-\frac{n}{2}}$ and $B_{n}>0$
on $\paren{-\frac{n}{2},-\frac{n}{2}}$. See e.g.
\cite[A.8]{Christensen (2016)} for more details.

Let $n, m\in \Z$, $\phi=B_n$ and $\phi_1=B_{n+2m}$. Here,

\centerline{$H(\ga)=\frac{1}{\sum_{l\in\Z}
\left(\frac{\sin{\pi(\ga+l)}}{\pi(\ga+l)}\right)^{2(n+m)}}$}

\noindent is not a trigonometric polynomial (see \cite[Example
9.4.3]{Christensen (2016)}). So, the generator of the oblique dual
frame of $\{T_k B_n\}_{k\in\Z}$ in $\Vsb$ has not compact support
whereas for the generator $\psi_{K}$ of the approximate oblique dual
frame we have $\textrm{supp}(\psi_{K})= [-\frac{n+2m+2K}{2},
\frac{n+2m+2K}{2}]$. We also note that $\psi_{K}$ has the same
smoothness as $B_{n+2m}$.

In the figures, we show the plots obtained with MATLAB of $H$ (solid
line), $H_{K}$ (dash-dot line), $\psi$ (solid line) and $\psi_{K}$
(dash-dot line) for $K = 0, 1, 2, 3$. Figure~\ref{figure:13}
corresponds to the case $n=1$ and $m=3$ and Figure~\ref{figure:21}
to the case $n=2$ and $m=1$.

\begin{figure}[!]
\centering
\includegraphics[width=1\textwidth]{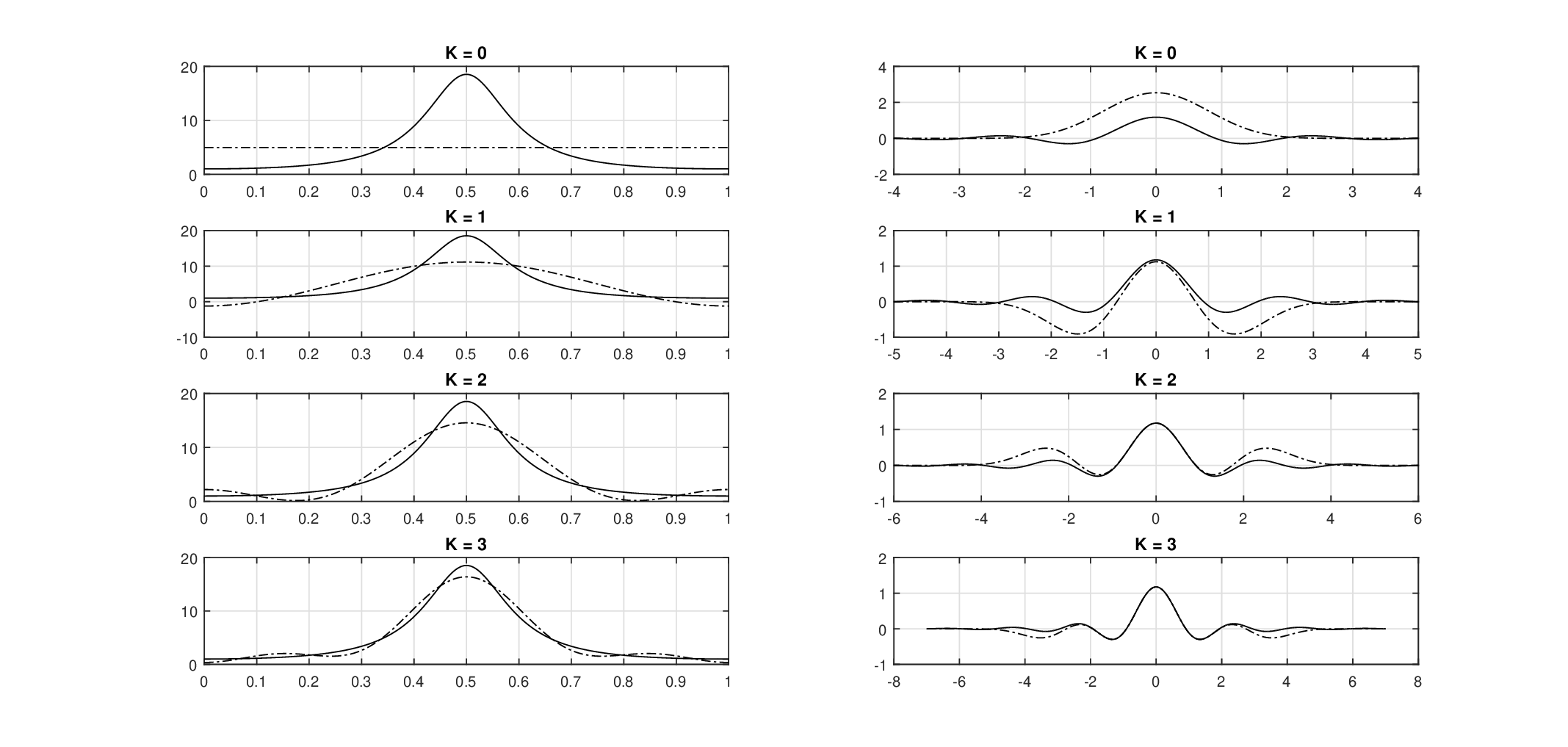}
\caption{{\scriptsize Case $n=1$ and $m=3$. (a) The functions $H$
(solid line) and $H_{K}$ (dash-dot line); (b) The generators $\psi$
(solid line) and $\psi_{K}$ (dash-dot line).}}\label{figure:13}
\end{figure}

\begin{figure}[!]
\centering
\includegraphics[width=1\textwidth]{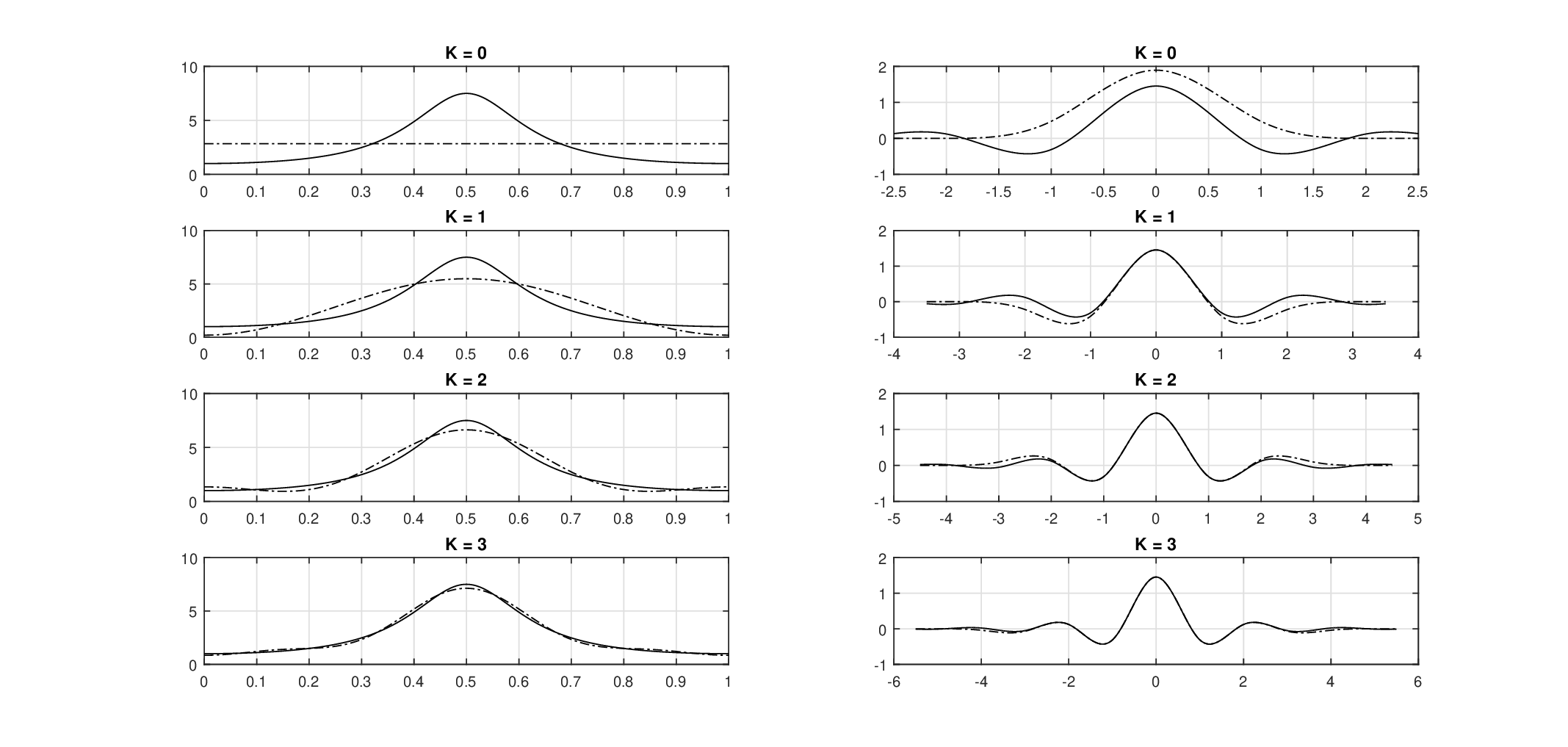}
\caption{{\scriptsize Case $n=2$ and $m=1$. (a) The functions $H$
(solid line) and $H_{K}$ (dash-dot line); (b) The generators $\psi$
(solid line) and $\psi_{K}$ (dash-dot line).}}\label{figure:21}
\end{figure}

According to the expression of $H$ and $H_{K}$, the errors $\e(K)$
depend on $n+m$. The entries of the tables that appear below were
computed using MATLAB. The first three show the supports of
$\psi_{K}$, the absolute values $\abs{c_{K+1}}$, the norms
$\norm{\psi_{K}-\psi}$ and the estimated errors $\varepsilon(K)$ for
$K = 0, 1, \ldots, 6$. Table~\ref{table:13} corresponds to the case
$n=1$ and $m=3$ and Table~\ref{table:21} to the case $n=2$ and
$m=1$. We observe that $\abs{c_{K+1}} \rightarrow 0$ as $K
\rightarrow 0$. This convergence is slower when $n+m$ increases.
This is reflected in the behavior of $\norm{\psi_{K}-\psi}$ and
$\varepsilon(K)$ which tend to zero more slowly when $n+m$ increases
too. The error $\varepsilon(K)$ decreases exponentially. Applying
the function polyfit to $\log_{2}\varepsilon(K)$ we obtained
approximate expressions of $\varepsilon(K)$ for $n+m= 1, \ldots, 8$
that appear in Table~\ref{table:e}.

{\scriptsize
\begin{table}[!]
\centering
\begin{tabular}{|c |c |c | c | c |}
 \hline
 $K$ & $\textrm{supp}(\psi_{K})$ & $\abs{c_{K+1}}$ & $\norm{\psi_{K}-\psi}$ & $\e(K)$ \\ [1ex]
 \hline
 0 & $[-3.5, 3.5]$ & $3.0910$ & $4.3233$ & $3.9647$ \\[1ex]
 \hline
 1 & $[-4.5, 4.5]$ & $1.7080$ & $2.4003$ & $2.2172$ \\[1ex]
 \hline
 2 & $[-5.5, 5.5]$ & $0.9208$ & $1.2954$ & $1.1984$  \\[1ex]
 \hline
 3 & $[-6.5, 6.5]$ & $0.4937$ & $0.6947$ & $0.6428$  \\[1ex]
 \hline
 4 & $[-7.5, 7.5]$ & $0.2644$ & $0.3720$ & $0.3442$  \\[1ex]
 \hline
 5 & $[-8.5, 8.5]$ & $0.1415$ & $0.1992$ & $0.1842$  \\[1ex]
 \hline
 6 & $[-9.5, 9.5]$ & $0.0758$ & $0.1066$ & $0.0986$  \\[1ex]
 \hline
\end{tabular}
\smallskip
\caption{{\scriptsize Case $n=1$ and $m=3$.}}\label{table:13}
\end{table}
}


{\scriptsize
\begin{table}[!]
\centering
\begin{tabular}{|c |c |c | c | c |}
 \hline
 $K$ & $\textrm{supp}(\psi_{K})$ & $\abs{c_{K+1}}$ & $\norm{\psi_{K}-\psi}$ & $\e(K)$ \\ [1ex]
 \hline
 0 & $[-2, 2]$ & $1.3217$ & $2.2396$ & $1.8421$ \\[1ex]
 \hline
 1 & $[-3, 3]$ & $0.5733$ & $0.9715$ & $0.8012$ \\[1ex]
 \hline
 2 & $[-4, 4]$ & $0.2470$ & $0.4186$ & $0.3453$  \\[1ex]
 \hline
 3 & $[-5, 5]$ & $0.1064$ & $0.1803$ & $0.1487$  \\[1ex]
 \hline
 4 & $[-6, 6]$ & $0.0458$ & $0.0776$ & $0.0640$  \\[1ex]
 \hline
 5 & $[-7, 7]$ & $0.0197$ & $0.0334$ & $0.0276$  \\[1ex]
 \hline
 6 & $[-8, 8]$ & $0.0085$ & $0.0144$ & $0.0119$  \\[1ex]
 \hline
\end{tabular}
\smallskip
\caption{{\scriptsize Case $n=2$ and $m=1$.}}\label{table:21}
\end{table}
}



{\scriptsize
\begin{table}[!]
\centering
\begin{adjustbox}{max width=\textwidth}
\begin{tabular}{|c |c |c |c |c |c |c |c | }
 \hline
 $n+m$ & $2$ & $3$& $4$& $5$& $6$& $7$& $8$ \\ [1ex]
 \hline
 $\e(K)$ & $2^{-1.9K}0.73$ & $2^{-1.21K}1.84$ & $2^{-0.89K}3.96$ & $2^{-0.69K}8.24$  & $2^{-0.56K}17.1$ & $2^{-0.46K}35.93$ & $2^{-0.38K}76.63$ \\[1ex]
 \hline
\end{tabular}
\end{adjustbox}
\smallskip
\caption{{\scriptsize Approximated expressions of $\e(K)$ for
different values of $n+m$.}}\label{table:e}
\end{table}
}

\section*{Declaration of interest: none}

\section*{Acknowledgement}
We thank the reviewers for the detailed reading of the paper and for
the very useful and valuable observations. This research has been
supported by Grants PIP 112-201501-00589-CO (CONICET), PROIPRO
03-1620 (UNSL), PICT-2014-1480 and UBACyT 20020130100422BA.

\end{document}